\documentclass{article}
\usepackage[margin=1.2in]{geometry}
\usepackage{setspace}
\usepackage{mathpazo}

\usepackage[english]{babel}

\usepackage{microtype}
\usepackage{graphicx}
\usepackage{subcaption}
\usepackage{booktabs}
\usepackage{enumitem}
\usepackage{centernot}
\usepackage[colorlinks,linkcolor={blue},citecolor={blue},urlcolor={red},bookmarks=true,linktoc=page]{hyperref}

\usepackage{amsmath}
\usepackage{amssymb}
\usepackage{mathtools}
\usepackage{amsthm}
\usepackage{thmtools}
\usepackage[capitalize,noabbrev]{cleveref}
\usepackage[textsize=tiny]{todonotes}
\usepackage{authblk}
\usepackage{fancyhdr, natbib, xcolor, algorithm, algorithmic, caption}

\theoremstyle{plain}
\newtheorem{theorem}{Theorem}[section]
\newtheorem{proposition}[theorem]{Proposition}
\newtheorem{lemma}[theorem]{Lemma}
\newtheorem{corollary}[theorem]{Corollary}
\theoremstyle{definition}
\newtheorem{definition}[theorem]{Definition}
\newtheorem{assumption}[theorem]{Assumption}
\theoremstyle{remark}
\newtheorem{remark}[theorem]{Remark}

\newtheorem{claim}{Claim}

\newcommand{\expec}[1]{\mathbb{E}\left[#1\right]}
\newcommand{\expecover}[2]{\mathbb{E}_{#1}\left[#2\right]}
\newcommand{\norm}[1]{\lVert #1 \rVert}
\newcommand{\indicator}[1]{\mathbb{I} \left\{#1\right\}}

\newcommand{\cM}{\mathcal{M}}
\newcommand{\prob}[1]{\mathbb{P} \left[#1\right]}
\newcommand{\probunder}[2]{\mathbb{P}_{#1} \left[#2\right]}

\newcommand{\reals}{\mathbb{R}}
\newcommand{\naturals}{\mathbb{N}}

\newcommand{\logfrac}[2]{\log \left(\frac{#1}{#2}\right)}
\newcommand{\defas}{:=}

\newcommand{\cP}{\mathcal{P}}
\newcommand{\cQ}{\mathcal{Q}}
\newcommand{\cS}{\mathcal{S}}
\newcommand{\cA}{\mathcal{A}}
\newcommand{\cF}{\mathcal{F}}

\newcommand{\cE}{\mathcal{E}}

\newcommand{\I}{\mathbb{I}}
\newcommand{\kl}[2]{\text{D}_{\rm KL} \left(#1, #2\right)}

\newcommand{\ber}{\text{Ber}}

\title{Asymptotically Optimal Sequential Testing with Markovian Data}

\author[1]{Alhad Sethi}
\author[2]{Kavali Sofia Sagar}
\author[1]{Shubhada Agrawal}
\author[3]{Debabrota Basu}
\author[2]{P. N. Karthik}

\affil[1]{Indian Institute of Science, Bangalore \href{mailto:alhadsethi@iisc.ac.in}{\texttt{alhadsethi@iisc.ac.in}}}
\affil[2]{Indian Institute of Technology, Hyderabad \href{mailto:ai24resch11003@iith.ac.in}{\texttt{ai24resch11003@iith.ac.in
}}}
\affil[1]{Indian Institute of Science, Bangalore \href{mailto:shubhada@iisc.ac.in}
{\texttt{shubhada@iisc.ac.in}}}
\affil[3]{Univ. Lille, Inria, CNRS, Centrale Lille, UMR 9189 – CRIStAL \href{mailto:debabrota.basu@inria.fr}
{\texttt{debabrota.basu@inria.fr}}}
\affil[2]{Indian Institute of Technology, Hyderabad \href{mailto:pnkarthik@ai.iith.ac.in}{\texttt{pnkarthik@ai.iith.ac.in
}}}

\date{\today}

\allowdisplaybreaks

\begin{document}

\maketitle

\begin{abstract}
\normalsize
We study one-sided and $\alpha$-correct sequential hypothesis testing for data generated by an ergodic, finite-state Markov chain. The \textit{null} hypothesis is that the unknown transition matrix belongs to a prescribed set $\cal P$ of stochastic matrices, and the \textit{alternative} corresponds to a disjoint set $\cal Q$. We establish \textit{a non-asymptotic instance-dependent lower bound} on the expected stopping time of any valid sequential test under the alternative, which is asymptotically tight. Our novel analysis improves the existing lower bounds, which are either asymptotic or provably sub-optimal in this setting. Our lower bound incorporates both the stationary distribution and the transition structure induced by the unknown Markov chain. We further propose an optimal test whose expected stopping time matches this lower bound asymptotically as $\alpha \to 0$. We illustrate the usefulness of our framework through applications to sequential detection of model misspecification in Markov Chain Monte Carlo and to testing structural properties, such as the linearity of transition dynamics, in Markov decision processes. Our findings yield a sharp and general characterization of optimal sequential testing procedures under Markovian dependence.
\end{abstract}

\tableofcontents

\section{Introduction}

Hypothesis testing is a cornerstone of statistics and theoretical computer science: from data, one decides whether an unknown data-generating mechanism satisfies a prescribed property~\citep{lehmann2005testing,goldreich2017introduction}. Classical theory largely assumes i.i.d.\ samples, but many modern streams are temporally dependent, making hypothesis testing under dependence both practically important and theoretically subtle~\citep{Phatarfod1965,Gyori2015,Fauss2020}.

A widely used model of dependence is \emph{Markovianity}, where the future is conditionally independent of the past given the present state~\citep{bengio1999markovian,nagaraj2020least}. Markov dynamics arise in Markov Chain Monte Carlo (MCMC)~\citep{Roy2020}, reinforcement learning and control~\citep{sutton1998reinforcement,bertsekas_dynamic_2017,beutler1985optimal}, and hidden Markov models~\citep{rabiner2003introduction}. In these settings, the transition mechanism is typically unknown; theory often proceeds by imposing structural assumptions (e.g., Gaussianity or bilinearity)~\citep{jin2020provably,ouhamma2023bilinear}, whose validity may be unclear. This motivates \emph{testing} model classes under Markovian data~\citep{natarajan2003large,Fauss2020}.

We study \emph{sequential} hypothesis testing for finite-state Markov chains. Let $[m]\defas\{1,\ldots,m\}$, $\Delta_m$ be the simplex in $\reals^m$, and $\cM$ the set of $m\times m$ row-stochastic matrices. A time-homogeneous Markov chain is specified by $(P,\mu)$ with $P\in\cM$ and $\mu\in\Delta_m$, generating $X_1,X_2,\ldots$ via
\begin{equation*}\label{eq:MC_dynamics}
\probunder{(P,\mu)}{X_1=x_1,\ldots,X_n=x_n}
=\mu(x_1)\prod_{t=2}^{n} P(x_{t-1},x_t).
\end{equation*}
We assume \emph{sequential access} to data: at time $t$ we observe $X_t$. Given an unknown $(P,\mu)$, we test whether $P$ lies in a prescribed null class $\cP\subset\cM$ versus an alternative $\cQ\subset\cM$:
\[
H_0:\; P\in\cP
\qquad\text{versus}\qquad
H_1:\; P\in\cQ,
\]
a \emph{composite versus composite} problem. We assume $\cP\cap\cQ=\varnothing$ (and impose additional separation/identifiability conditions only when required for sharp characterizations).

We work in the one-sided, $\alpha$-correct, power-one sequential framework~\citep{darlingrobbins67,farrell1964asymptotic,robbins1974expected}. For $\alpha\in(0,1]$, an \emph{$\alpha$-correct, power-one sequential test} is a stopping time $\tau_\alpha$ (rejecting $H_0$ upon stopping) such that, uniformly over $\mu\in\Delta_m$,
\begin{align*}
\probunder{P,\mu}{\tau_\alpha<\infty} &\le \alpha \qquad \forall\,P\in\cP,\\
\probunder{Q,\mu}{\tau_\alpha<\infty} &= 1 \qquad \forall\,Q\in\cQ.
\end{align*}
Suppressing $\mu$ for brevity, our objective is \emph{instance-dependent} efficiency under the alternative:
\begin{enumerate}[nosep,leftmargin=*]
\item For fixed $Q\in\cQ$, what is $\inf \expecover{Q}{\tau_\alpha}$ over all $\alpha$-correct, power-one tests?
\item Can we design a procedure that achieves this infimum \emph{to first order} as $\alpha\to 0$, simultaneously for all $Q\in\cQ$?
\end{enumerate}

\subsection{Contributions}
Our main contributions are as follows.
\begin{enumerate}[leftmargin=*,topsep=0pt,itemsep=0pt]
\item\textbf{Non-asymptotic instance-dependent lower bound and asymptotically optimal test.}
We prove the first \emph{non-asymptotic, instance-dependent} lower bound on $\expecover{Q}{\tau_\alpha}$ for $\alpha$-correct, power-one sequential tests under an \emph{unknown} alternative $Q\in\cQ$ (Theorem~\ref{thm:lower_bound}). The leading term is $\log(1/\alpha)$ scaled by an information quantity $D_{\mathcal{M}}^{\inf}(Q, \cP)$, an infimum of stationary-weighted KL divergences over $\cP$, plus an $\alpha$-independent term depending on structural properties of $Q$. We then construct a sequential test that matches this bound \emph{to first order} as $\alpha\to 0$ (\Cref{thm:optimality}).

\item\textbf{Composite null is fundamentally harder than a known singleton null.}
\citet{fields2025sequentialonesidedhypothesistesting} study \emph{singleton null versus composite alternative}: $H_0:P=P_0$ for known $P_0$, versus $H_1:P\in\cQ$. Our setting is \emph{composite versus composite} with a \emph{composite} null: even when data is generated by a specific $Q$, the test must rule out \emph{every} $P\in\cP$ while maintaining uniform Type-I control. This necessity of certifying incompatibility with an entire null class (rather than a single known reference) drives both our information characterization (via an infimum over $\cP$) and the technical analysis.

\item\textbf{Two technical tools.}
We state and prove two results that may be of independent interest: (a) A Pinsker-type inequality (\Cref{prop:stationary_expec_dm}) lower bounding a stationary-weighted divergence between two transition matrices by squared gaps between stationary means of suitable functions. (b) A uniform control of solutions to the Poisson equation in terms of mixing properties (\Cref{lem:control_solution_poisson}).

\item\textbf{Applications.}
We instantiate our framework for (i) \emph{misspecification detection} in MCMC by testing consistency with a target stationary distribution (\Cref{sec:subsection_applications_mcmc}), obtaining optimal detection guarantees (\Cref{cor:applications_mcmc}); and (ii) \emph{structural testing} in RL by sequentially verifying linear transition dynamics in MDPs (\Cref{sec:subsection_applications_mdp}).
\end{enumerate}

\subsection{Related Works}

{\bf Markovian Sequential Testing.}
Sequential testing dates back to \citet{Wald1945} and the SPRT for two simple hypotheses under i.i.d.\ data. We refer the reader to \citet[Section 2]{agrawal2025stoppingtimespoweronesequential} for an extensive survey on sequential testing under i.i.d.\ data. Work on sequential testing under Markovian dependence is comparatively limited, but includes early contributions such as \citet{Phatarfod1965,Phatarfod_1971, schmitz1983sequential, dimitriadis2007nonparametric, KieferS15}, which are largely SPRT-type procedures tailored to \emph{simple} hypotheses (singleton null and singleton alternative).

\citet{Fauss2020} study sequential and fixed-sample testing for Markov processes from a minimax/robust perspective, deriving tests with optimal worst-case guarantees. Their objective is complementary to ours: we consider one-sided, $\alpha$-correct, power-one tests and seek \emph{instance-dependent} characterizations under the (unknown) alternative.

The closest work to ours is \citet{fields2025sequentialonesidedhypothesistesting}, which analyzes one-sided sequential tests for Markov chains based on the plug-in likelihood estimator of \citet{Takeuchi2013}. Their setting differs in two key respects: (i) they test a \emph{singleton} null against its complement, whereas we treat \emph{composite} null and \emph{composite} alternative classes; and (ii) their analysis assumes uniformly bounded likelihood ratios, which we do not require. Moreover, the above works do not provide \emph{instance-dependent lower bounds} on the expected stopping time. In contrast, we establish a non-asymptotic, instance-dependent lower bound and match it (to first order) with an explicit procedure, proving asymptotic optimality without boundedness assumptions.

{\bf Multi-armed bandits and RL.}
Best arm identification in Markovian bandits~\citep{anantharam2003asymptotically, moulos2019, karthik2024optimal} is closely related: one-sided sequential testing can be viewed as a \emph{single-armed} fixed-confidence identification problem, where one observes a single evolving process and stops with controlled error once sufficient evidence accumulates.

Sharp fixed-confidence results for Markovian bandits, however, typically rely on strong parametric structure, most notably \emph{single-parameter exponential family} (SPEF) assumptions on the transition models (or reward/transition parametrizations)~\citep{anantharam2003asymptotically, moulos2019, karthik2024optimal}. Even in such regimes, lower bounds are often asymptotic and achievability arguments exploit the parametric form. While our setting admits a single-process interpretation, \emph{we do not require an SPEF assumption}: we derive non-asymptotic, instance-dependent lower bounds and matching (first-order) achievability for substantially more general Markov dynamics.

A related but distinct line concerns \emph{best policy identification} (BPI) in MDPs~\citep{al2021adaptive, al2021navigating, wagenmaker2022beyond, tuynman2024finding}, which provides non-asymptotic lower bounds for identifying an optimal policy under active data collection. Directly adapting these bounds to our testing problem can be loose; see \Cref{sec:suboptimality_other_bounds}. Nevertheless, we leverage martingale constructions from this literature in our test design and analysis.

Closest in spirit to our work is \emph{policy testing} in MDPs. \citet{ariu2025policytestingmarkovdecision} derive asymptotic lower bounds and propose procedures that match them asymptotically using martingale techniques. Although BPI-style non-asymptotic lower bounds apply in principle, they need not be tight for policy testing. Our instance-dependent, non-asymptotic methodology can be adapted to obtain sharper lower bounds in this setting as well, beyond parametric SPEF-type regimes.

\textbf{Paper organization:}
\Cref{sec:preliminaries} introduces background and notation. \Cref{sec:lowerbound} establishes the instance-dependent lower bound. \Cref{sec:algo_optimality} presents an asymptotically optimal test with matching sample-complexity guarantees. \Cref{sec:applications} contains applications: \Cref{sec:subsection_applications_mcmc} (MCMC misspecification testing) and \Cref{sec:subsection_applications_mdp} (structural testing in MDPs). Proofs are deferred to the appendices.

\section{Preliminaries: Primer on Markov Chains}\label{sec:preliminaries}

This section collects the notations and standard background on finite-state Markov chains used throughout the paper.

\noindent {\bf Notation.}
We denote the natural numbers, real numbers, and strictly positive real numbers by $\naturals$, $\reals$, and $\reals_{++}$, respectively. For $m\in\naturals$, let $[m]\defas\{1,\dots,m\}$. We write $\mathbf{1}_{n\times m}$ for the all-ones matrix (dropping subscripts when dimensions are clear). Any function $f:[m]\to\reals$ is identified with the vector $f\in\reals^m$, and we use these representations interchangeably. For $x\in\reals$, define $x^+\defas \max\{x,0\}$.

Let $\Delta_m$ be the probability simplex in $\reals^m$. For $p,q\in\Delta_m$, we write $p\ll q$ if $q(i)=0$ implies $p(i)=0$ for all $i\in[m]$. For a random variable $X$, $\sigma(X)$ denotes the $\sigma$-algebra it generates. For $p\in\Delta_m$ and $f:[m]\to\reals$, define $\expecover{p}{f}\defas \sum_{i\in[m]} f(i)p(i)$, and let $\mathbb{I}$ denote the indicator function.

{\bf Transition matrices and norms.}
A transition matrix on $m$ states is a nonnegative row-stochastic matrix $P\in\reals^{m\times m}$ satisfying
$P(i,j)\ge 0$ for all $i,j\in[m]$, and $P\mathbf{1}=\mathbf{1}$.
Let $\cM$ denote the set of all such matrices, and write $P(i,\cdot)\in\Delta_m$ for the $i$th row of $P$.
For $P,Q\in\cM$, we say $Q$ is (row-wise) absolutely continuous with respect to $P$, denoted $Q\ll P$, if
$Q(i,\cdot)\ll P(i,\cdot)$ for all $i\in[m]$. We use $\expecover{Q}{\cdot}$ to denote expectations when the chain evolves according to transition matrix $Q$ (with the initial distribution clear from context).
We equip $\reals^{m\times m}$ with the $\|\cdot\|_{1,\infty}$ norm~\citep{wolfer19a}, defined as 
\[
\norm{A}_{1,\infty}\defas \max_{i\in[m]} \norm{A(i,\cdot)}_1, \qquad A \in \reals^{m \times m}.
\]
For $A\in\cM$, one has $\|A\|_{1,\infty}=1$, but the norm is most useful through the metric it induces on $\cM$:
\[
\|P-Q\|_{1,\infty}
= 2\max_{i\in[m]} \|P(i,\cdot)-Q(i,\cdot)\|_{\rm TV},
\]
i.e., twice the worst-case total variation distance between corresponding rows.

{\bf Ergodic Markov chains.}
Let $(X_t)_{t\ge 0}$ be a Markov chain on $[m]$ with transition matrix $P\in\cM$ and initial distribution $\mu^\top\in\Delta_m$.
A distribution $\pi^\top\in\Delta_m$ is \emph{stationary} for $P$ if $\pi P=\pi$.
We call $P$ (and the induced chain) \emph{ergodic} if it is irreducible and aperiodic. In this case, $P$ admits a unique stationary distribution $\pi^\top$ with
\(
\pi_\ast \defas \min_{i\in[m]} \pi(i) > 0,
\)
and $\pi$ is also the limiting visitation distribution:
$\lim_{n\to\infty}\|\mu P^n-\pi\|_{\rm TV}=0$.

\begin{assumption}\label{assmp:ergodicity}
The data-generating Markov chain is ergodic.
\end{assumption}

The above assumption is not too restrictive, as the set of ergodic transition matrices is dense in $\cM$. Concretely, for any $P\in\cM$ and $\epsilon>0$, the matrix
\(
P_\epsilon \defas \big(1-\frac{\epsilon}{m}\big)P + \big(\frac{\epsilon}{m}\big) \, \mathbf{1}_{m\times m}
\)
is ergodic and can be made arbitrarily close to $P$ as $\epsilon \to 0$; see \citet[Chapter~1]{LevinPeresbook}.

{\bf Poisson equation.}
A central tool in Markov chain analysis is the Poisson equation (PE). For an ergodic $P\in\cM$ with stationary distribution $\pi^\top$
and a function $f:[m]\to\reals$, the PE for $(P,f)$ (in the unknown $\omega$) is
\begin{align}\label{eq:poisson}
(I-P) \, \omega = f - (\pi f)\mathbf{1},
\end{align}
where $\pi f \defas \expecover{\pi}{f}$ is a scalar and $\mathbf{1}$ is the all-ones column vector.
The PE \eqref{eq:poisson} always admits solutions; one convenient choice is
\begin{equation}\label{eq:closedform_poisson}
\omega_{P,f} \defas \sum_{n=0}^\infty (P^n-\mathbf{1}\pi)\,f,
\end{equation}
where $\mathbf{1}\pi$ denotes the rank-one matrix with every row equal to $\pi$.
We refer to \citet[Section~21.2]{Douc2018} for further background. The PE is useful because it enables a standard decomposition of additive
functionals into a martingale difference term plus a remainder that is typically negligible under mixing.

\paragraph{Spectral properties.}
Spectral information about $P$ quantifies its long-run behavior and mixing.
Fix an ergodic $P\in\cM$ with stationary distribution $\pi^\top$. The chain is \emph{reversible} if it satisfies detailed balance:
$\pi(i)P(i,j)=\pi(j)P(j,i)$ for all $i,j\in[m]$. For reversible $P$, all eigenvalues lie in $[-1,1]$ and can be ordered as
$1=\lambda_1\ge \lambda_2\ge \cdots \ge \lambda_m$ \citep[Lemma~12.2]{LevinPeresbook}. The (usual) spectral gap is then
$\gamma(P)\defas 1-\lambda_2$. For non-reversible chains, several notions of spectral gap exist~\citep{fill91, paulin2018concentrationinequalitiesmarkovchains, chatterjee2025spectralgapnonreversiblemarkov}.
We follow \citet{paulin2018concentrationinequalitiesmarkovchains} and work with the \emph{pseudo-spectral gap}.

\begin{definition}[Pseudo-spectral gap, \citet{paulin2018concentrationinequalitiesmarkovchains}]\label{def:psg}
For an ergodic $P\in\cM$ with stationary distribution $\pi^\top$, let $P^*$ be its time-reversal, defined via
$P^*(i,j)\defas P(j,i)\frac{\pi(j)}{\pi(i)}$. The pseudo-spectral gap of $P$ is
\[
\gamma_{\rm ps}(P)\defas \max_{k\ge 1}\frac{\gamma\bigl((P^*)^k P^k\bigr)}{k}.
\]
\end{definition}
One has $\gamma_{\rm ps}(P)\in(0,1]$ for ergodic $P$, and larger $\gamma_{\rm ps}(P)$ corresponds to faster mixing
\citep[Proposition~3.4]{paulin2018concentrationinequalitiesmarkovchains}. In our development, $\gamma_{\rm ps}$ enters the lower-bound analysis;
the algorithm itself is largely insensitive to this particular choice (see \Cref{rem:spectral_gap_choice}).

\section{Instance-dependent Lower Bound}\label{sec:lowerbound}

In this section, we derive a non-asymptotic lower bound on the expected stopping time of any $\alpha$-correct, power-one sequential test for Markov chains. The bound holds for every $\alpha\in(0,1)$. We first introduce two quantities that will appear in the statement.

\begin{proposition}\label{lem:control_solution_poisson}
For an ergodic $P\in\cM$ and $f\in\reals^m$, let $\omega_{P,f}\in\reals^m$ be the solution \eqref{eq:closedform_poisson} to the PE for $(P,f)$. Then
\[
\|\omega_{P,f}\|_\infty \le C_P \,\|f\|_\infty,
\]
where, writing $\gamma_{\rm ps}=\gamma_{\rm ps}(P)$, the constant $C_P$ depends only on $P$ and is given by
\[
C_P \defas 
\begin{cases}
\frac{(1-\gamma_{\rm ps})^{-\frac{1}{2\gamma_{\rm ps}}}}{\sqrt{\pi_\ast}}
\left(\frac{1}{1-\sqrt{1-\gamma_{\rm ps}}}\right), & \gamma_{\rm ps}\in(0,1),\\[1ex]
2, & \gamma_{\rm ps}=1.
\end{cases}
\]
\end{proposition}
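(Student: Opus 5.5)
The plan is to bound $\omega_{P,f}=\sum_{n\ge0}(P^n-\mathbf{1}\pi)f$ term by term. The quantity to control is the sup-norm $\|(P^n-\mathbf{1}\pi)f\|_\infty=\max_i|\expecover{P^n(i,\cdot)}{f}-\pi f|$. This is at most $2\|f\|_\infty\max_i\|P^n(i,\cdot)-\pi\|_{\rm TV}$, and we then sum a geometric-type bound on the distance to stationarity. Two regimes are handled separately.

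\emph{The case $\gamma_{\rm ps}=1$.} Here the maximum in Definition~\ref{def:psg} is attained at $k=1$ with $\gamma(P^*P)=1$. This forces $P^*P$ to have all nontrivial eigenvalues equal to $0$ on $L^2(\pi)$, i.e.\ $\|P-\mathbf{1}\pi\|_{L^2(\pi)}=0$. Hence $P=\mathbf{1}\pi$, so $P^n=\mathbf{1}\pi$ for all $n\ge1$, and $\omega_{P,f}=(I-\mathbf{1}\pi)f$. Its sup norm is at most $\|f\|_\infty+|\pi f|\le2\|f\|_\infty$, which gives $C_P=2$.

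\emph{The case $\gamma_{\rm ps}\in(0,1)$.} I would invoke the mixing bound of \citet{paulin2018concentrationinequalitiesmarkovchains} in terms of the pseudo-spectral gap. Let $k^\ast$ attain the maximum, so that $\gamma((P^*)^{k^\ast}P^{k^\ast})=k^\ast\gamma_{\rm ps}$. Then the operator norm of $P^{k^\ast}-\mathbf{1}\pi$ on $L^2_0(\pi)$ is at most $\sqrt{1-k^\ast\gamma_{\rm ps}}\le(1-\gamma_{\rm ps})^{k^\ast/2}$. Writing $n=qk^\ast+r$ with $0\le r<k^\ast$, and using that $P$ is an $L^2(\pi)$ contraction, we get
\[
\|(P^n-\mathbf{1}\pi)g\|_{L^2(\pi)}\le(1-\gamma_{\rm ps})^{(n-r)/2}\|g\|_{L^2(\pi)}\le(1-\gamma_{\rm ps})^{-k^\ast/2}(1-\gamma_{\rm ps})^{n/2}\|g\|_{L^2(\pi)} .
\]
The next step bounds $k^\ast$. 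Since $\gamma(\cdot)\le1$, we have $k^\ast\gamma_{\rm ps}\le1$, so $k^\ast\le1/\gamma_{\rm ps}$. The prefactor is therefore at most $(1-\gamma_{\rm ps})^{-1/(2\gamma_{\rm ps})}$, which is exactly the first factor in $C_P$.

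We then convert to sup norm. For $g=f-(\pi f)\mathbf{1}$ we have $\|g\|_{L^2(\pi)}\le\|f\|_\infty$, since this is a standard deviation under $\pi$. For any $h$, $|h(i)|\le\|h\|_{L^2(\pi)}/\sqrt{\pi(i)}\le\|h\|_{L^2(\pi)}/\sqrt{\pi_\ast}$. Note also that $(P^n-\mathbf{1}\pi)f=(P^n-\mathbf{1}\pi)g$. Summing over $n\ge0$ gives
\[
\|\omega_{P,f}\|_\infty\le\frac{(1-\gamma_{\rm ps})^{-1/(2\gamma_{\rm ps})}}{\sqrt{\pi_\ast}}\sum_{n\ge0}(1-\gamma_{\rm ps})^{n/2}\|f\|_\infty ,
\]
and the geometric series equals $1/(1-\sqrt{1-\gamma_{\rm ps}})$, which matches $C_P$.

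The main obstacle is justifying the per-block contraction $\|P^{k}-\mathbf{1}\pi\|_{L^2_0(\pi)}^2=1-\gamma((P^*)^kP^k)$ for non-reversible $P$. The approach is to note that $(P^k)^*P^k$ is self-adjoint and positive on $L^2(\pi)$ with top eigenvalue $1$ on constants. Its second eigenvalue $\lambda_2$ equals the squared operator norm of $P^k$ restricted to mean-zero functions, and $\lambda_2=1-\gamma$. I would also check that the $L^2(\pi)$ contraction of $P$ is valid: by Jensen and stationarity, $\pi((Pg)^2)\le\pi(Pg^2)=\pi(g^2)$.
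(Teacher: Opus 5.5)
Your proof is correct and gives exactly the constant $C_P$, but it takes a different route from the paper's.

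The paper works at the level of total variation. It bounds $|\omega_{P,f}(x)|\le 2\|f\|_\infty\sum_{n}\|P^n(x,\cdot)-\pi\|_{\rm TV}$ and then imports the mixing estimate of \citet[Proposition~3.4]{paulin2018concentrationinequalitiesmarkovchains}, recorded as \Cref{cor:paulin_cor}; the prefactor $\tfrac12(1-\gamma_{\rm ps})^{-1/(2\gamma_{\rm ps})}\pi(x)^{-1/2}$ there absorbs the $2$. For $\gamma_{\rm ps}=1$ it invokes \citet[Theorem~2.1]{fill91} to conclude $P=\mathbf{1}\pi$.

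You instead derive the contraction $\|P^n g\|_{L^2(\pi)}\le(1-\gamma_{\rm ps})^{(n-1/\gamma_{\rm ps})/2}\|g\|_{L^2(\pi)}$ for mean-zero $g$ directly from \Cref{def:psg}. The ingredients are the identity $\|P^k g\|_{L^2(\pi)}^2=\langle g,(P^*)^kP^kg\rangle_\pi$, the block decomposition $n=qk^\ast+r$, Bernoulli's inequality $1-k\gamma\le(1-\gamma)^k$, and $k^\ast\gamma_{\rm ps}\le 1$. This is in substance the $L^2(\pi)$ estimate behind Paulin's proposition, run on functions rather than densities. You then convert to sup norm via $|h(i)|\le\|h\|_{L^2(\pi)}/\sqrt{\pi(i)}$. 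Your $\gamma_{\rm ps}=1$ case is also self-contained, since $\|Pg_0\|_{L^2(\pi)}^2=\langle g_0,P^*Pg_0\rangle_\pi=0$ for every mean-zero $g_0$.

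The citation gives the paper brevity. Your route gives a first-principles proof that treats the $n=0$ term on the same footing as the others. For $\gamma_{\rm ps}\in(0,1)$ it also gives the slightly stronger intermediate bound $\|\omega_{P,f}\|_\infty\le C_P\,\|f-(\pi f)\mathbf{1}\|_{L^2(\pi)}$. That bound is in terms of the centered function, so it bears directly on the centered-versus-uncentered comparison raised in \Cref{rem:spectral_gap_choice}.

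One cosmetic point: your opening sentence announces a total-variation reduction that your $\gamma_{\rm ps}\in(0,1)$ argument never uses, so you can drop it.
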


\noindent\underline{\textit{Interpretation.}}
The quantity $C_P$ controls the \emph{sensitivity} of the Poisson solution: it upper bounds the magnitude of $\omega_{P,f}$ relative to that of $f$, in terms of the mixing properties of $P$ (captured by $\gamma_{\rm ps}$ and $\pi_\ast$). 

When $\gamma_{\rm ps}=1$, the chain behaves as in the i.i.d.\ case: $P$ has identical rows equal to its stationary distribution, so $P^n=\mathbf{1}\pi$ for all $n\ge 1$. Substituting into \eqref{eq:closedform_poisson} yields
$\omega_{P,f}=f-(\pi f)\mathbf{1}$ and hence $C_P=2$.
For $\gamma_{\rm ps}\in(0,1)$, the proof combines the representation \eqref{eq:closedform_poisson} with bounds on $\|P^n-\mathbf{1}\pi\|$ in terms of $\gamma_{\rm ps}$ from \citet{paulin2018concentrationinequalitiesmarkovchains}. The proof is given in \Cref{sec:subsection_controlling}.

\medskip
Next we define the information-theoretic quantity that governs the hardness of testing.

\begin{definition}[Stationary-weighted KL divergence]\label{defn:d_m(q,p)}
Let $P,Q\in\cM$ be such that $Q\ll P$ and $Q$ is ergodic, and let $\pi^\top \in \reals^{m}$ be the stationary distribution of $Q$. Let
\[
D_{\cM}(Q,P)\defas \sum_{i\in[m]} \pi_i \,\kl{Q(i,\cdot)}{P(i,\cdot)},
\]
where for $q,p\in\Delta_m$,
$\kl{q}{p}\defas \sum_{j\in[m]} q_j\log\!\frac{q_j}{p_j}$.
\end{definition}

\noindent With the above notations in place, we now state the first main result of our paper: instance-dependent lower bound on the expected stopping time. 

\begin{theorem}[Lower bound]\label{thm:lower_bound}
Fix $\alpha\in(0,1)$ and an ergodic $Q\in\cQ$ with stationary distribution $\pi^\top \in \reals^{m}$.
Let $\tau_\alpha$ be the stopping time of any $\alpha$-correct, power-one sequential test.
Then for every $P\in\cP$,
\[
\expecover{Q}{\tau_\alpha}
\ge
\frac{\log(1/\alpha)}{D_{\cM}(Q,P)}
-
\frac{\expecover{Q}{\omega_{Q,f_P}(X_0)-\omega_{Q,f_P}(X_{\tau_\alpha})}}{D_{\cM}(Q,P)},
\]
where $f_P(i)\defas \kl{Q(i,\cdot)}{P(i,\cdot)}$ for all $i\in[m]$, and $\omega_{Q,f_P}$ denotes the solution \eqref{eq:closedform_poisson} for $(Q,f_P)$.
Moreover,
\begin{equation}\label{eq:lower_bound}
\expecover{Q}{\tau_\alpha}
\ge
\left(
\frac{\log(1/\alpha)}{D_{\cM}^{\inf}(Q,\cP)}
-
\frac{2C_Q}{\min_i \pi_i}
\right)^+,
\end{equation}
where $D_{\cM}^{\inf}(Q,\cP)\defas \inf_{P\in\cP} D_{\cM}(Q,P)$ and $C_Q$ is the constant in \Cref{lem:control_solution_poisson}.
\end{theorem}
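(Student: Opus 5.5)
The argument runs through a change of measure, with a Wald-type identity for the Markov log-likelihood ratio obtained from the Poisson equation. Fix $P\in\cP$ with $D_{\cM}(Q,P)<\infty$; otherwise $Q\not\ll P$, the bound is trivial, and the $\inf$ over $\cP$ only involves finite terms. We may also assume $\expecover{Q}{\tau_\alpha}<\infty$. Let
\[
L_n\defas\sum_{t=1}^{n}\log\frac{Q(X_{t-1},X_t)}{P(X_{t-1},X_t)}
\]
be the log-likelihood ratio with a common initial distribution $\mu$.

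\textbf{Step 1 (information inequality).} Since $\tau_\alpha<\infty$ $Q$-a.s. and $\probunder{P}{\tau_\alpha<\infty}\le\alpha$, the data-processing inequality applied to the event $\{\tau_\alpha<\infty\}$, truncated at $\tau_\alpha\wedge n$ and with $n\to\infty$, gives
\[
\expecover{Q}{L_{\tau_\alpha}}\ge \mathrm{kl}(1,\alpha)=\log(1/\alpha).
\]
Concretely, I would use $\probunder{P}{\tau_\alpha\le n}=\expecover{Q}{e^{-L_{\tau_\alpha}}\mathbb{I}\{\tau_\alpha\le n\}}$ and Jensen's inequality.

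\textbf{Step 2 (Wald identity via Poisson).} Write $L_n=M_n+\sum_{t=1}^n f_P(X_{t-1})$, where $M_n$ is a $Q$-martingale: each increment $\log\frac{Q}{P}(X_{t-1},X_t)-f_P(X_{t-1})$ has zero conditional mean. Its increments are not bounded uniformly, since $\log(Q/P)$ can be very negative. However, $E_Q[|\log(Q/P)|\mid X_{t-1}]$ is bounded on a finite state space, so optional stopping holds for integrable $\tau_\alpha$, and $\expecover{Q}{M_{\tau_\alpha}}=0$. Next, the Poisson equation $(I-Q)\omega=f_P-D_{\cM}(Q,P)\mathbf{1}$ gives
\[
\sum_{t=1}^n f_P(X_{t-1})=nD_{\cM}(Q,P)+\sum_{t=1}^n\bigl[\omega(X_{t-1})-(Q\omega)(X_{t-1})\bigr].
\]
Rewrite the sum as $\omega(X_0)-\omega(X_n)$ plus a bounded-increment martingale $\sum_t[\omega(X_t)-(Q\omega)(X_{t-1})]$, which has zero mean at $\tau_\alpha$. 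This yields
\[
\expecover{Q}{L_{\tau_\alpha}}=D_{\cM}(Q,P)\,\expecover{Q}{\tau_\alpha}+\expecover{Q}{\omega(X_0)-\omega(X_{\tau_\alpha})}.
\]
Combining with Step 1 and dividing by $D_{\cM}(Q,P)$ gives the first display. Indexing ($X_0$ versus $X_1$) just needs to be made consistent.

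\textbf{Step 3 (uniform bound).} By \Cref{lem:control_solution_poisson}, $|\omega(X_0)-\omega(X_{\tau_\alpha})|\le 2C_Q\|f_P\|_\infty$. To remove $\|f_P\|_\infty$, use $\pi_\ast\|f_P\|_\infty\le\sum_i\pi_i f_P(i)=D_{\cM}(Q,P)$, which holds because $f_P\ge0$. The correction term divided by $D_{\cM}(Q,P)$ is therefore at most $2C_Q/\pi_\ast$, independently of $P$. Hence
\[
\expecover{Q}{\tau_\alpha}\ge \frac{\log(1/\alpha)}{D_{\cM}(Q,P)}-\frac{2C_Q}{\pi_\ast}
\]
for every $P$. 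Taking the supremum over $P\in\cP$ replaces $D_{\cM}$ by $D^{\inf}_{\cM}$, and nonnegativity of $\tau_\alpha$ gives the positive part.

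\textbf{Main obstacle.} The delicate step is justifying the limit and optional-stopping arguments for $L_{\tau_\alpha}$ when log-likelihood ratios are unbounded, which the paper explicitly does not assume away. My plan is to work with $\tau_\alpha\wedge n$ throughout and use the conditional $L^1$ bound on increments to pass $n\to\infty$ by dominated convergence. Monotone convergence handles the $\mathrm{kl}$ side, since $\probunder{Q}{\tau_\alpha\le n}\to1$.
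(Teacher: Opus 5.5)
Your proposal is correct and follows the same route as the paper. The paper uses the Poisson-equation Wald identity (Theorem~\ref{thm:walds_markov} and Corollary~\ref{cor:walds_ll}), then the data-processing bound $\log(1/\alpha)$ on the event $\{\tau_\alpha<\infty\}$, then the $P$-free control $2\|\omega_{Q,f_P}\|_\infty/D_{\cM}(Q,P)\le 2C_Q/\pi_\ast$ via $\pi_\ast\|f_P\|_\infty\le D_{\cM}(Q,P)$. Your version is more careful, since you justify the step $\mathbb{E}_Q[L_{\tau_\alpha}]=\mathbb{E}_Q[\sum_{t\le\tau_\alpha} f_P(X_{t-1})]$ that the paper leaves implicit.

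Two small remarks, neither a gap:
\begin{itemize}
\item With only $Q\ll P$, your change-of-measure identity is really the inequality $\probunder{P}{\tau_\alpha\le n}\ge\expecover{Q}{e^{-L_{\tau_\alpha}}\mathbb{I}\{\tau_\alpha\le n\}}$. That is the direction you need, so the argument goes through.
\item For fixed $P$ on a finite state space, the increments $\log\frac{Q}{P}(X_{t-1},X_t)$ are in fact $Q$-a.s.\ bounded. The optional-stopping obstacle is therefore milder than you anticipated.
\end{itemize}
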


\paragraph{Proof sketch.}
We apply a Markov-chain version of Wald's lemma \citep{Moustakides1999} to express the expected log-likelihood ratio between $Q$ and $P$ as
$\expecover{Q}{\tau_\alpha}D_{\cM}(Q,P)$ plus an additive term involving a Poisson solution for $(Q,f_P)$.
We then invoke the data processing inequality to relate the log-likelihood ratio to the test error, which yields the first inequality.
To obtain \eqref{eq:lower_bound}, we optimize over $P\in\cP$ and upper bound the Poisson correction using \Cref{lem:control_solution_poisson}.
Full details appear in \Cref{sec:appendix_lowerbound}.

Taking $\alpha\to 0$ in \eqref{eq:lower_bound} yields the asymptotic relation
\[
\liminf_{\alpha\to 0}\frac{\expecover{Q}{\tau_\alpha}}{\log(1/\alpha)}
\;\ge\;
\frac{1}{D_{\cM}^{\inf}(Q,\cP)}
\]
for every $\alpha$-correct, power-one test.
In particular, $D_{\cM}^{\inf}(Q,\cP)$ is the \emph{fundamental instance-dependent hardness} of certifying that the data-generating transition matrix is $Q$ (against $\cP$).

\emph{(a) Relation to non-asymptotic RL lower bounds.}
Non-asymptotic lower bounds in best policy identification for MDPs \citep{al2021adaptive, al2021navigating, wagenmaker2022beyond, tuynman2024finding}
often proceed by (i) writing the expected log-likelihood ratio as a weighted sum of per-state divergences $f_P(i)$, (ii) applying data processing, and (iii) optimizing over the weights. This yields denominators of the form
$\sup_{w\in\Delta_m}\inf_{P\in\cP} w^\top f_P$.
While valid, this relaxation can be loose in our setting because we do \emph{not} control state visitation: the weights are fixed by the instance $Q$ through its stationary distribution. In particular, even in the simple case $\cP=\{P\}$, the optimizer over $w$ places all mass on the state with the largest KL term, which is not achievable when the visitation proportions are dictated by $Q$.

\emph{(b) Connection to the i.i.d.\ case.}
In the i.i.d.\ setting, \citet{agrawal2025stoppingtimespoweronesequential} characterize hardness via the KL projection of $Q$ onto $\cP$.
\Cref{thm:lower_bound} shows that the analogous quantity in the Markovian setting is the stationary-weighted projection
$D_{\cM}^{\rm inf}(Q, \cP)$.
A key technical difference is that the Poisson correction term depends on $P$ through $f_P$.
Therefore, naively taking a supremum over $P$ (as one can in the i.i.d.\ analysis) can make the correction term arbitrarily large and destroy the bound.
We avoid this by controlling the correction uniformly using \Cref{lem:control_solution_poisson}, which depends only on the alternative $Q$. 



\begin{remark}[Recovery of i.i.d. bounds]
    Consider the following testing problem: given i.i.d. samples from some distribution in $\Delta_m$, we want to test whether the samples are drawn from $q\in \Delta_m$ or from some distribution in a disjoint set $\cP_{\text{iid}} \subset \Delta_m$. This is a special case of the Markovian case and corresponds to the setting where $Q\in \cM$ has identical rows equal to $q$, and $\cP\subset \cM$ consists of matrices with identical rows equal to some $p \in \cP_{\text{iid}}$. In this case, the per-state divergence vector $f_{P}$ (defined in \Cref{thm:lower_bound}) is constant. Consequently, the zero vector satisfies the Poisson equation for $(Q, f_{P})$, allowing us to take $C_Q = 0$. We thus recover the standard i.i.d. bounds \citep[Theorem 3.1]{agrawal2025stoppingtimespoweronesequential}. 
\end{remark}

\emph{(c) Implications for best arm identification in Markovian bandits.}
A one-sided sequential test can be viewed as a single-armed Markovian bandit instance
\citep{anantharam2003asymptotically, moulos2019, karthik2024optimal}.
Existing lower bounds in this literature are typically asymptotic (e.g., $\alpha\to 0$) and/or rely on single-parameter exponential family (SPEF) assumptions.
Our proof technique suggests a route to non-asymptotic instance-dependent lower bounds without the SPEF assumption.

\emph{(d) Implications for policy testing.}
In policy testing, one collects Markovian trajectories under a fixed policy and tests whether the expected cumulative reward exceeds a threshold.
Our argument can be adapted to yield non-asymptotic lower bounds for this problem, complementing the currently available asymptotic results \citep{ariu2025policytestingmarkovdecision}.

\section{Algorithm Design and Optimality}\label{sec:algo_optimality}
We now present a sequential procedure for testing a compact null hypothesis set $\cP$ against an alternative hypothesis set $\cQ$, with $\cP \cap \cQ = \varnothing$. Unlike much of the prior literature—which typically assumes at least one of $\cP, \cQ$ is a singleton \citep{Phatarfod1965,fields2025sequentialonesidedhypothesistesting}—our framework accommodates \emph{composite} structure for both sets.

Our method, outlined in \Cref{alg:sequential_test}, estimates the transition matrix from observed data and employs a martingale-based test statistic. In brief, it constructs an empirical transition kernel $\widehat{Q}_t$ from state-transition counts, computes the statistic $L_t$ (Line~16), and stops once $L_t$ exceeds a state-visitation-count-dependent boundary $\beta_t$ (Line~17), at which point it rejects the null. We establish the asymptotic optimality of \Cref{alg:sequential_test}.

\begin{theorem}\label{thm:optimality}
For any $\alpha\in(0,1)$, the test (\Cref{alg:sequential_test}) is $\alpha$-correct. Moreover, for any $Q\in \cQ$,
\begin{equation}
\limsup_{\alpha\to 0}\;
\frac{\expecover{Q}{\tau_\alpha}}{\log(1/\alpha)}
\;\le\;
\frac{1}{D_{\cM}^{\rm inf}(Q, \cP)}.
\end{equation}
\end{theorem}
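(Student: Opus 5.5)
We have not yet seen \Cref{alg:sequential_test}, so we assume the natural form suggested by the description. The test keeps transition counts $N_t(i,j)$ and visit counts $N_t(i)=\sum_j N_t(i,j)$, and forms the empirical kernel $\widehat Q_t$. Its statistic is the generalized log-likelihood ratio
\[
L_t=\inf_{P\in\cP}\sum_{i} N_t(i)\,\kl{\widehat Q_t(i,\cdot)}{P(i,\cdot)},
\]
and it stops at the first $t$ with $L_t\ge \beta_t$, where
\[
\beta_t=\log(1/\alpha)+O\Bigl(\sum_i \log(1+N_t(i))\Bigr)
\]
is a threshold of mixture or union-bound type. The proof then splits into two parts: $\alpha$-correctness, and the bound on the expected stopping time.

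\textbf{$\alpha$-correctness.} Fix $P\in\cP$ and any initial distribution $\mu$. Since $L_t\le \sum_i N_t(i)\kl{\widehat Q_t(i,\cdot)}{P(i,\cdot)}$, it suffices to show
\[
\probunder{P,\mu}{\exists t:\ \textstyle\sum_i N_t(i)\kl{\widehat Q_t(i,\cdot)}{P(i,\cdot)}\ge\beta_t}\le\alpha .
\]
This sum equals $\sup_{Q'}\log\bigl(\prod_{s}Q'(X_{s-1},X_s)/P(X_{s-1},X_s)\bigr)$. We compare it with the mixture martingale $M_t=\int \prod_s Q'(X_{s-1},X_s)/P(X_{s-1},X_s)\,d\rho(Q')$, where $\rho$ is a product of Dirichlet$(1/2,\dots,1/2)$ priors over the rows. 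The likelihood ratio is a nonnegative $\mathbb P_P$-martingale with mean one, and Markov transitions factorize row-wise, so $M_t$ is as well. The standard KT/Dirichlet regret bound gives
\[
\log M_t\ \ge\ \sum_i N_t(i)\kl{\widehat Q_t(i,\cdot)}{P(i,\cdot)}-\sum_i\Bigl(\tfrac{m-1}{2}\log N_t(i)+c_m\Bigr).
\]
Ville's inequality then gives Type-I error at most $\alpha$ for $\beta_t$ of this form. No likelihood-ratio boundedness is needed, and the guarantee holds uniformly in $\mu$.

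\textbf{Expected stopping time.} Fix $Q\in\cQ$ and $\epsilon>0$, and define the good event at time $t$ as
\[
G_t=\Bigl\{\|\widehat Q_t-Q\|_{1,\infty}\le\delta,\ \max_i |N_t(i)/t-\pi_i|\le\delta\Bigr\}.
\]
The plan has four steps.
\begin{enumerate}
\item \emph{Continuity.} Show that on $G_t$,
\[
L_t/t\ \ge\ \inf_{P\in\cP}\sum_i\pi_i\kl{\widehat Q_t(i,\cdot)}{P(i,\cdot)}-\epsilon\ \ge\ D^{\inf}_\cM(Q,\cP)-2\epsilon .
\]
This uses lower semicontinuity of $(q,p)\mapsto\kl{q}{p}$ together with compactness of $\cP$. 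The infimum over a compact set of a jointly lower semicontinuous function is lower semicontinuous in $q$, so for small $\delta$ the value drops by at most $\epsilon$. This holds even when $D^{\inf}=\infty$, with the obvious modification.
\item \emph{Deterministic stopping on the good event.} Since $\beta_t=\log(1/\alpha)+O(m\log t)$, on $G_t$ with $t\ge T_\alpha:=\lceil(1+\epsilon')\log(1/\alpha)/(D^{\inf}-2\epsilon)\rceil$ we have $L_t\ge\beta_t$ for $\alpha$ small. Hence $\tau_\alpha\le T_\alpha$ unless some $G_t^c$ occurs.
\item \emph{Tail sum.} Use
\[
\expecover{Q}{\tau_\alpha}\le T_\alpha+\sum_{t> T_\alpha}\probunder{Q}{\tau_\alpha>t}\le T_\alpha+\sum_{t\ge1}\probunder{Q}{G_t^c}.
\]
\item \emph{Concentration.} Show $\sum_t\probunder{Q}{G_t^c}<\infty$ with a bound independent of $\alpha$. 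For the visit counts, apply Paulin's Bernstein-type inequality for ergodic chains, which involves $\gamma_{\rm ps}(Q)$; alternatively, use the Poisson-equation martingale decomposition with \Cref{lem:control_solution_poisson} and Azuma–Hoeffding. Either gives $\probunder{Q}{|N_t(i)/t-\pi_i|>\delta}\le c\,e^{-c' t\delta^2}$. For $\widehat Q_t(i,\cdot)$, condition on the visits to $i$: the successive transitions out of $i$ are i.i.d.\ $Q(i,\cdot)$ by the strong Markov property. Combining vector Hoeffding with a union bound over the count $N_t(i)\ge(\pi_i-\delta)t$ gives summable tails.
\end{enumerate}
Dividing by $\log(1/\alpha)$, letting $\alpha\to0$, and then letting $\epsilon,\epsilon'\to0$ yields the claim.

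The main obstacle is the continuity step. We need the uniform-in-$P$ lower bound on $L_t/t$ near $Q$ even when some $P\in\cP$ have zero entries, where KL can jump to $+\infty$. We will handle this through lower semicontinuity and compactness rather than Lipschitz arguments, since only a lower bound on $L_t$ is required. A second concern is making the concentration step clean without bounded likelihood ratios. This is addressed by the fact that the good event depends only on $\widehat Q_t$ and the visit counts, never on likelihood ratios under $P$.
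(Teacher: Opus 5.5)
Your proposal is correct and follows essentially the same route as the paper. For $\alpha$-correctness, both use a product-of-Dirichlet mixture of row-wise likelihood ratios together with Ville's inequality. For the stopping time, both use a good event on which visit frequencies and $\widehat Q_t$ are close to $\pi$ and $Q$, so the test must have stopped once $t$ exceeds roughly $\log(1/\alpha)/D_{\cM}^{\inf}(Q,\cP)$; the complement is summable independently of $\alpha$, and lower semicontinuity plus compactness of $\cP$ passes to the limit.

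The one calibration to fix concerns the prior. Algorithm~1's threshold $\beta_t=\log(1/\alpha)+(m-1)\psi_t$ is exactly what the uniform Dirichlet$(1,\dots,1)$ prior yields, via $\binom{N}{x}\le e^{NH(x/N)}$ and $\log\binom{N+m-1}{m-1}\le (m-1)\log\bigl(e(1+\tfrac{N}{m-1})\bigr)$. You should therefore use that prior rather than the KT one, or else check that $\beta_t$ dominates the KT regret bound for every value of $N_t(i)$. Separately, your weight perturbation is cleaner stated multiplicatively, $\sum_i \tfrac{N_t(i)}{t}\kl{\widehat Q_t(i,\cdot)}{P(i,\cdot)}\ge (1-\delta/\pi_\ast)\sum_i\pi_i\kl{\widehat Q_t(i,\cdot)}{P(i,\cdot)}$, rather than with an additive $-\epsilon$.
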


We omit the proof here for brevity and refer the reader to \Cref{sec:appendix_algo} for a detailed proof. 




\setlength{\textfloatsep}{6pt}
\begin{algorithm}[t!]
\caption{Sequential Markov Chain Test}
\label{alg:sequential_test}
\begin{algorithmic}[1]
    \REQUIRE State space $[m]$, (null) set $\mathcal{P}$, $\alpha \in (0, 1)$.
    \STATE \textbf{Initialize:} $t \leftarrow 0$, observe initial state $X_0$.
    \STATE \textbf{Initialize Counts:} $N_x \leftarrow 0$, $N_{x,y} \leftarrow 0~\forall~x,y \in [m]$.  
    \LOOP
        \STATE $t \leftarrow t + 1$
        \STATE \textbf{Observe} next state $X_t$.
        \STATE $u \leftarrow X_{t-1},~v \leftarrow X_t$, $N_{u} \leftarrow N_{u} + 1$, $N_{u, v} \leftarrow N_{u, v} + 1$
        \FOR{$x \in [m]$}
            \IF{$N_x > 0$}
                \STATE $\widehat{Q}_t(x, \cdot) \leftarrow [N_{x,1}/N_x, \dots, N_{x,m}/N_x]$
            \ELSE
                \STATE $\widehat{Q}_t(x, \cdot) \leftarrow [1/m, \dots, 1/m]$ 
            \ENDIF
        \ENDFOR
        \STATE $\psi_t \leftarrow \sum_{x \in [m]}\log\!\left(e\left(1+\frac{N_x}{m-1}\right)\right)$
        \STATE $\beta_t \leftarrow \log(1/\alpha) + (m-1)\psi_t$
        \STATE $L_t \leftarrow \inf\limits_{P\in\mathcal{P}} \sum_{x : N_x > 0} N_x \  
        \kl{\widehat{Q}_t(x, \cdot)}{P(x, \cdot)}$\label{eq:statistic}
        \IF{$L_t \geq \beta_t$}
            \STATE \textbf{Stop} (Reject $\mathcal{P}$) and set $\tau_{\alpha} = t$
        \ENDIF
    \ENDLOOP
\end{algorithmic}
\end{algorithm}
\begin{remark} Note that the (non-negative) process $M_t:= e^{L_t - (m-1)\psi_t}$ is upper bounded by a non-negative super-martingale (see, Appendix~\ref{sec:mixture_martingale},~\eqref{eq:product_lowerbound}), and hence, is an e-process. We refer the reader to \citet[\S 7]{ramdas2024hypothesis} for a definition of an e-process.
\end{remark}
\subsection{Computational Tractability}\label{sec:tractability}
Since the test statistic $L_t$ optimizes a convex objective over $P\in\cP$, when $\cP$ is convex we can compute $L_t$ efficiently using standard convex optimization tools. In Section~\ref{sec:applications}, we illustrate this in two instances—testing misspecification of MCMC samplers and testing linearity of transitions in Markov Decision Processes (MDPs)—where the null set is convex and the statistic can be evaluated efficiently.

In general, our setup allows the null set $\cP$ to be arbitrarily non-convex. Consequently, designing computationally efficient implementations requires exploiting additional structure in $\cP$ (cf.\ \citet{al2021adaptive,ariu2025policytestingmarkovdecision}). When $\cP$ is a finite union of convex sets, one can solve the convex program over each component and then take the minimum~\citep{carlsson2024pure,das2025frappe}.

For computational efficiency, recent work has also explored Transformer-based surrogates for sequential testing/pure-exploration style problems \citep{russo2026incontext}. While promising empirically, such approaches do not, in general, come with (a) rigorous, user-prescribed $\alpha$-type guarantees of the kind required here, and (b) a clear mechanism that remains well-calibrated and scalable in the fixed-confidence regime $\alpha\to 0$. We address both issues in one step via the next proposition, which provides a principled Pinsker-type \emph{lower bound} on our statistic: thresholding this lower bound preserves $\alpha$-correctness by construction, while yielding a computationally tractable alternative. The following result may be of independent interest.




\begin{proposition}\label{prop:stationary_expec_dm}
Let $P,Q \in \cM$ be ergodic with stationary distributions $\pi_P, \pi_Q$, respectively. For any $g:[m]\to \reals$ not constant over states,
\begin{equation}
D_{\cM}(Q,P)
\;\geq\;
\frac{1}{2\norm{\omega_{P, g}}_{\infty}^2}\,
\Big(\expecover{\pi_Q}{g} - \expecover{\pi_P}{g}\Big)^2,
\label{eq:computationally-tractable-statistic}
\end{equation}
where $\omega_{P,g}$ is a solution to the PE for the pair $(P,g)$.
\end{proposition}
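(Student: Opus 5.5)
Write $\omega=\omega_{P,g}$, so that $(I-P)\omega = g-(\pi_P g)\mathbf 1$. The plan is to average this identity against $\pi_Q$ and then compare $P$ with $Q$ row by row. Since $\pi_Q Q = \pi_Q$, we have $\pi_Q(I-Q)\omega=0$. Hence
\[
\expecover{\pi_Q}{g}-\expecover{\pi_P}{g} \;=\; \pi_Q (I-P)\omega \;=\; \pi_Q(Q-P)\omega \;=\; \sum_{i\in[m]}\pi_Q(i)\sum_{j\in[m]}\bigl(Q(i,j)-P(i,j)\bigr)\omega(j).
\]
This step is exact and uses only stationarity of $\pi_Q$ and the Poisson equation for $(P,g)$. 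The requirement that $g$ be non-constant guarantees that $\omega$ is not a constant vector, so the right-hand side of \eqref{eq:computationally-tractable-statistic} is well defined. If $\omega$ were constant, then $\pi_P g = g$ would make the left side zero anyway.

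Next I bound each row term. For row $i$, the inner sum equals $\sum_j (Q(i,j)-P(i,j))(\omega(j)-c)$ for any constant $c$, because both rows sum to one. This gives
\[
\Bigl|\sum_j (Q(i,j)-P(i,j))\omega(j)\Bigr| \le \|Q(i,\cdot)-P(i,\cdot)\|_1\,\|\omega\|_\infty = 2\|Q(i,\cdot)-P(i,\cdot)\|_{\rm TV}\|\omega\|_\infty .
\]
Pinsker's inequality gives $\|Q(i,\cdot)-P(i,\cdot)\|_{\rm TV}\le \sqrt{\tfrac12 \kl{Q(i,\cdot)}{P(i,\cdot)}}$. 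So each row term is at most $\|\omega\|_\infty\sqrt{2\,\kl{Q(i,\cdot)}{P(i,\cdot)}}$.

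Finally, I average over $i$ with weights $\pi_Q$ and apply Jensen (Cauchy--Schwarz), $\sum_i\pi_Q(i)\sqrt{a_i}\le\sqrt{\sum_i\pi_Q(i)a_i}$. This yields
\[
|\expecover{\pi_Q}{g}-\expecover{\pi_P}{g}|\le \|\omega\|_\infty\sqrt{2D_{\cM}(Q,P)}.
\]
Squaring and rearranging gives \eqref{eq:computationally-tractable-statistic}. If some row has $Q(i,\cdot)\not\ll P(i,\cdot)$, then $D_{\cM}(Q,P)=\infty$ because $\pi_Q(i)>0$ under ergodicity, and the bound is trivial.

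The argument holds for any solution $\omega$ of the PE, not only \eqref{eq:closedform_poisson}, since only the equation itself is used. I expect the main subtlety to be the very first identity: it requires the Poisson equation for $P$ (not $Q$) together with stationarity of $\pi_Q$ (not $\pi_P$), and mixing these up gives a wrong formula.
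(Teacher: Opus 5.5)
Your proof is correct and follows essentially the same route as the paper. Both arguments use the identity $\expecover{\pi_Q}{g}-\expecover{\pi_P}{g}=\pi_Q(Q-P)\omega_{P,g}$, which comes from stationarity of $\pi_Q$ and the PE for $P$. Both then bound each row term by $2\|\omega_{P,g}\|_\infty$ times the total variation distance; the paper obtains this from the variational form of TV, which is exactly your H\"older step. Pinsker and Jensen finish both proofs. Your centering remark is not needed for the stated bound. If you did use it, though, you could replace $\|\omega_{P,g}\|_\infty$ by $\min_c\|\omega_{P,g}-c\mathbf{1}\|_\infty$, which gives a slightly stronger bound that does not depend on which PE solution is chosen.
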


Recall that in case $g$ is constant, the difference in expectations is zero and we get $0/0$ which we take by convention to be $0$. Hence, the bound is trivial.
A similar approximation (lower bound) for the corresponding complexity term in the i.i.d.\ setting was proven in \citet[Section~3.4]{pmlr-v134-agrawal21a}. However, proving it in the Markovian setting is substantially more delicate and requires new ideas. The main technical challenge is to pass from a difference in stationary expectations to a sum of row-wise divergences. We overcome this by combining the Poisson equation with the variational characterization of total variation distance, which allows us to rewrite the stationary-expectation gap as a stationary-weighted sum of row-wise expectation gaps of the Poisson solution. Applying Pinsker’s inequality and Jensen’s inequality then yields the stated bound. A complete proof is provided in \Cref{sec:computationally_tractable_pinsker}.

{\bf A computationally tractable surrogate statistic.}
Proposition~\ref{prop:stationary_expec_dm} motivates a natural, computationally tractable surrogate for $L_t$ obtained by lower bounding the stationary distribution-weighted KL divergence (Definition~\ref{defn:d_m(q,p)}). Concretely, we replace the unknown kernel $Q$ in~\eqref{eq:computationally-tractable-statistic} by its empirical estimate $\widehat Q_t$, maximize the resulting bound over the choice of test function $g$ for each fixed $P\in\cP$, and then take the infimum over $P\in\cP$. This leads to the statistic
\begin{align}\label{eq:tractable_surrogate_stat}
\underline L_t &
\;\coloneqq\;
\inf_{P\in\cP}\;
\sup_{g:[m]\to\reals}\;
\frac{\Big(\expecover{\pi_{\widehat Q_t}}{g}-\expecover{\pi_P}{g}\Big)^2}
{2\,\|\omega_{P,g}\|_\infty^2} \\
&= \inf_{P\in\cP} \; \frac{\Big(\expecover{\pi_{\widehat Q_t}}{g^*}\Big)^2}
{2\,\|\omega^*(\mu^*)\|_\infty^2}\label{eq:closed_form}
\end{align}
where $\pi_{\widehat Q_t}$ denotes a stationary distribution of $\widehat Q_t$ (whenever it exists), $\omega_{P,g}$ denotes the solution to the PE for $(P, g)$ as in \eqref{eq:closedform_poisson} and $g^* = (I-P)\omega^*$. The vector $\omega^*(\mu^*) \in [-1,1]^m$ is defined coordinate-wise for a threshold $\mu^*$ as:
\begin{equation*}
\omega_i^*(\mu^*) =
\begin{cases}
+1, &\quad a_i > \mu^* \pi_P(i)\\
-1, &\quad a_i < \mu^* \pi_P(i)\\
t_i, &\quad a_i = \mu^* \pi_P(i)
\end{cases}
\end{equation*}
where $\mu^* \in \mathbb{R}$ is chosen such that the probability masses strictly above and strictly below the threshold are bounded by $1/2$:
\begin{equation*}
\sum_{i : a_i > \mu^* \pi_P(i)} \pi_P(i) \leq \frac{1}{2} \quad \text{and} \quad \sum_{i : a_i < \mu^* \pi_P(i)} \pi_P(i) \leq \frac{1}{2},
\end{equation*}
and $t_i \in [-1,1]$ are chosen to satisfy the constraint $\langle \pi_P, \omega^* \rangle = 0$. Equivalently, for each candidate null model $P$, $\underline L_t$ selects the function $g^*$ that maximizes the normalized squared discrepancy between stationary expectations under
$\widehat Q_t$ and $P$, and then reports the least favorable value over $P\in\cP$. The inner supremum in \eqref{eq:tractable_surrogate_stat} can be solved in closed form to give \eqref{eq:closed_form} (see, \cref{prop:closed_form_surrogate}). Note that $g^*$ implicitly depends on $P$. The proof of the simplification of the inner supremum problem proceeds via reparameterizing the optimization variable using the Poisson equation, which transforms the problem into a constrained linear program and solving the dual problem. We refer the reader to Appendix~\ref{sec:computationally_tractable_pinsker} for a detailed proof.


Thresholding $\underline L_t$ therefore yields a computationally efficient alternative to thresholding $L_t$.
Since $\underline L_t$ is constructed via a lower bound on the Markov divergence, the resulting test is conservative.
In particular, thresholding $\underline L_t$ preserves $\alpha$-correctness, but may increase the expected stopping time;
the magnitude of this increase depends on the tightness of the lower bound. We note that the i.i.d.\ literature suggests that surrogate statistics based on Pinsker-type relaxations can still lead to order-optimal procedures, albeit with a suboptimal constant factor. Classical examples include the regret guarantees of UCB \citep{Auer2002} vs KL-UCB \citep{Capp2013,pmlr-v134-agrawal21a}, and IMED \citep{HondaTakemura2010}; see also \citet[Appendix J]{agrawal2021optimal}. A similar phenomenon may hold in the Markovian setting as well, however, a precise characterization of the resulting sample-complexity gap is beyond the scope of this work. 

In addition to enabling a computationally tractable test, \Cref{prop:stationary_expec_dm} also improves \emph{interpretability}. While \Cref{thm:optimality} characterizes the asymptotic sample complexity through $\inf_{P\in\cP} D_{\cM}(Q,P)$, this quantity can be abstract in applications. In \Cref{sec:subsection_applications_mcmc}, we give an example where \Cref{prop:stationary_expec_dm} yields a natural lower bound in terms of a squared sub-optimality gap.

\subsection{Extensions to Two-Sided Testing}\label{sec:two_sided_main}
In this section, we show how the one-sided (power-one, $\alpha$-correct) sequential tests studied throughout this work can be combined to construct a two-sided sequential test. We begin by formalizing two-sided testing between $\cP$ (null) and $\cQ$ (alternative).

For $\alpha, \beta>0$, a level-$(\alpha, \beta)$ sequential test consists of a stopping time $\tau_{\alpha, \beta}$ and a decision rule $D \in \{0,1\}$, where $D=0$ and $D=1$ correspond to selecting the null and the alternative, respectively. Recall that a one-sided test is specified solely by a stopping time $\tau_\alpha$, with rejection of the null upon stopping. For any initial distribution $\mu\in \Delta_m$, a two-sided test $(D,\tau_{\alpha, \beta})$ satisfies
\begin{align*}
\probunder{P, \mu}{D(\tau_{\alpha, \beta}) = 1} \leq \alpha \qquad \forall \ P\in \cP,\\
\probunder{Q, \mu}{D(\tau_{\alpha, \beta}) = 0} \leq \beta \qquad \forall \ Q\in \cQ.
\end{align*}
In the following theorem, we present a lower bound and a test that achieves it asymptotically as $(\alpha, \beta)\to 0$.

\begin{theorem}\label{thm:two_sided_test}
For any ergodic $Q\in \cQ$, $P\in \cP$, and $\alpha, \beta \in (0,0.5)$, any level-($\alpha, \beta$) two-sided test with stopping time $\tau_{\alpha, \beta}$ such that $\expec{\tau_{\alpha, \beta}} < \infty$ under both the null and the alternative satisfies
\begin{align*}
\expecover{Q}{\tau_{\alpha, \beta}}
&\geq
\left(\frac{d(\beta, 1-\alpha)}{D_{\cM}^{\inf}(Q,\cP)} -\frac{2C_Q}{\pi_Q^*}\right)^+,\\
\expecover{P}{\tau_{\alpha, \beta}}
&\geq
\left(\frac{d(\alpha, 1-\beta)}{D_{\cM}^{\inf}(P,\cQ)} -\frac{2C_P}{\pi_P^*}\right)^+,
\end{align*}
where $\pi_Q^* = \min_{i\in[m]}\pi_{Q}(i)$, $\pi_P^* = \min_{i\in[m]}\pi_{P}(i)$, and $C_P, C_Q$ are as defined in \Cref{lem:control_solution_poisson}.

Furthermore, there exists a level-($\alpha, \beta$) two-sided test for compact $\cP, \cQ \subset \cM$ which is asymptotically optimal:
\begin{align*}
        \lim_{\alpha,\beta\to 0}\frac{\expecover{Q}{\tau_{\alpha, \beta}}}{\log(1/\alpha)}
    &=
    \frac{1}{D_{\cM}^{\inf} (Q, \cP)} \qquad \forall \text{ ergodic } Q \in \cQ, \\
    \lim_{\alpha, \beta\to 0}\frac{\expecover{P}{\tau_{\alpha, \beta}}}{\log(1/\beta)}
    &=
    \frac{1}{D_{\cM}^{\inf} (P, \cQ)} \qquad \forall \text{ ergodic } P \in \cP. 
\end{align*}
\end{theorem}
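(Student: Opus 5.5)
The proof splits into two parts: the lower bounds and the achievability.

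\textbf{Lower bounds.} We repeat the argument behind \Cref{thm:lower_bound}, but replace $\log(1/\alpha)$ with the binary relative entropy $d(\cdot,\cdot)$. Fix $Q\in\cQ$ and $P\in\cP$ with $Q\ll P$; if $Q\not\ll P$ then $D_{\cM}(Q,P)=\infty$ and the bound is trivial. Let $\Lambda_\tau=\sum_{t=1}^{\tau}\log\frac{Q(X_{t-1},X_t)}{P(X_{t-1},X_t)}$ be the log-likelihood ratio up to $\tau=\tau_{\alpha,\beta}$. Since $\expecover{Q}{\tau}<\infty$, the Markov Wald identity of \citet{Moustakides1999} gives
\[
\expecover{Q}{\Lambda_\tau}=\expecover{Q}{\tau}D_{\cM}(Q,P)+\expecover{Q}{\omega_{Q,f_P}(X_0)-\omega_{Q,f_P}(X_\tau)}.
\]
This is obtained by writing $\sum_{t<\tau} f_P(X_t)$ through the Poisson equation for $(Q,f_P)$ and applying optional stopping to the resulting martingale.

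Next, apply the data-processing inequality to the event $\{D=0\}\in\cF_\tau$:
\[
\expecover{Q}{\Lambda_\tau}\ge d\bigl(\probunder{Q}{D=0},\probunder{P}{D=0}\bigr)\ge d(\beta,1-\alpha).
\]
The second inequality uses monotonicity of $d$ together with $\probunder{Q}{D=0}\le\beta<1/2<1-\alpha\le\probunder{P}{D=0}$.

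It remains to control the Poisson correction uniformly in $P$. Using $\|\omega_{Q,f}\|_\infty\le C_Q\|f\|_\infty$ from \Cref{lem:control_solution_poisson}, the correction is at most $2C_Q\|f_P\|_\infty$. I would also use $\|f_P\|_\infty\le D_{\cM}(Q,P)/\pi_Q^*$, exactly as in the proof of \eqref{eq:lower_bound}. Dividing by $D_{\cM}(Q,P)$ gives
\[
\expecover{Q}{\tau}\ge \frac{d(\beta,1-\alpha)}{D_{\cM}(Q,P)}-\frac{2C_Q}{\pi_Q^*}.
\]
Since the subtracted term does not depend on $P$, taking the infimum over $P\in\cP$ yields the first bound. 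The second bound follows by the symmetric argument with the roles of $(P,\cQ,\alpha)$ and $(Q,\cP,\beta)$ exchanged, using the event $\{D=1\}$.

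\textbf{Achievability.} Run two instances of \Cref{alg:sequential_test} in parallel on the same trajectory:
\begin{itemize}
\item $\tau^{(1)}$, with null set $\cP$ and level $\alpha$;
\item $\tau^{(0)}$, with null set $\cQ$ and level $\beta$.
\end{itemize}
Set $\tau_{\alpha,\beta}=\min(\tau^{(0)},\tau^{(1)})$, with $D=1$ if $\tau^{(1)}$ fires first and $D=0$ otherwise. Correctness is immediate from \Cref{thm:optimality}: under $P\in\cP$ we have $\probunder{P}{D=1}\le\probunder{P}{\tau^{(1)}<\infty}\le\alpha$, and symmetrically under $Q\in\cQ$ the error is at most $\beta$. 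Compactness is needed here so that \Cref{thm:optimality} applies with $\cQ$ as the null.

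For the upper bound, $\expecover{Q}{\tau_{\alpha,\beta}}\le\expecover{Q}{\tau^{(1)}}$, so \Cref{thm:optimality} gives
\[
\limsup\frac{\expecover{Q}{\tau_{\alpha,\beta}}}{\log(1/\alpha)}\le\frac{1}{D^{\inf}_{\cM}(Q,\cP)},
\]
and the same holds for $P$ with $\beta$ and $D^{\inf}_{\cM}(P,\cQ)$. For the matching liminf, use $d(\beta,1-\alpha)\ge(1-\beta)\log\frac{1}{\alpha}-\log 2$, so the lower bound above is asymptotically $\log(1/\alpha)/D^{\inf}_{\cM}(Q,\cP)$. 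Together these give the stated limits.

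\textbf{Main obstacle.} The delicate step is the uniform-in-$P$ control of the Poisson correction, namely the bound $\|f_P\|_\infty\le D_{\cM}(Q,P)/\pi_Q^*$. It holds because each $f_P(i)\ge0$ and $\pi_Q(i)\ge\pi_Q^*$. This is what allows taking the infimum over $P\in\cP$ without the correction term blowing up.

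A secondary point is justifying Wald's identity when $\expecover{Q}{\tau}<\infty$; this is where the finite-expectation hypothesis is used. Finally, the limits as $(\alpha,\beta)\to0$ should be read jointly: the argument above gives the limsup along any joint path, and the liminf lower bound also holds along any joint path because the $(1-\beta)$ factor tends to one.
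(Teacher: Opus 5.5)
Your proposal is correct and follows essentially the same route as the paper. For the lower bound, you use the Markov Wald identity, data processing on the event $\{D=0\}$ with monotonicity of the Bernoulli KL, and the uniform Poisson control $\|f_P\|_\infty\le D_{\cM}(Q,P)/\pi_Q^*$. For achievability, you run two instances of \Cref{alg:sequential_test} in parallel and bound $\expecover{Q}{\tau_{\alpha,\beta}}$ by the one-sided stopping time. Your explicit inequality $d(\beta,1-\alpha)\ge(1-\beta)\log(1/\alpha)-\log 2$ merely replaces \Cref{lem:technical_kl_zero}. One minor correction: compactness is needed only for the asymptotic upper bound in \Cref{thm:optimality}, not for $\alpha$-correctness, which holds for any null set via the mixture martingale.
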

We establish the non-asymptotic lower bound on the expected stopping time using arguments analogous to those employed for one-sided tests. A two-sided sequential test can be constructed via a simple and intuitive approach: running two one-sided tests in parallel—one testing $\cP$ against $\cQ$ and the other testing $\cQ$ against $\cP$. This idea has previously been explored in the context of multiple hypothesis testing for i.i.d.\ data~\citep{lorden1976sprt,lorden1977nearly,baum2002sequential}. We show that this technique yields an asymptotically optimal test for Markovian data.

We refer the reader to \Cref{sec:two_sided} for a detailed proof of \Cref{thm:two_sided_test}.

\section{Applications}\label{sec:applications}
We now present two concrete applications of our framework: (i) testing misspecification in MCMC samplers, and (ii) testing linear transition dynamics in MDPs.

\subsection{Testing Misspecification in MCMC Samplers}\label{sec:subsection_applications_mcmc}
Let $\pi^\top \in \Delta_m$ be a known, strictly positive target distribution. In MCMC, one constructs an ergodic Markov chain with transition matrix $P$ satisfying the stationarity condition $\pi P = \pi$. Expectations under $\pi$ are then estimated via time averages of a function $f$ along the trajectory.

In practice—especially with approximate kernels or black-box samplers—it is often necessary to validate whether the observed path $X_1,X_2,\ldots$ could plausibly have been generated by a Markov chain whose stationary distribution is $\pi^\top$. If the underlying kernel violates $\pi P=\pi$, then the resulting estimates are asymptotically biased. Consequently, it is important to detect misspecification quickly whenever it occurs.

Related work on testing MCMC procedures typically studies threshold tests for stationary expectations of a fixed function \citep{Gyori2015,Rabinovich2020}. Our goal is different: we test misspecification of the \emph{transition structure} without committing to a specific test function. We also emphasize that our objective is not to test convergence diagnostics for MCMC \citep{Roy2020}, but rather to test whether the observed samples could converge to the desired stationary distribution.

\textbf{Problem formulation.}
Define the set of valid transition matrices
\[
\cP_\pi \triangleq \{P\in\cM:\ \pi P=\pi\}.
\]
We test $\cP=\cP_\pi$ against $\cQ=\cP_\pi^\complement$.

The one-sided nature of our framework is well-suited to this setting. Under the null, continued sampling is beneficial since it reduces the variance of the MCMC estimator. Our test guarantees that, when there is no misspecification, the chain is not stopped with probability at least $1-\alpha$, allowing estimation to proceed in parallel with testing. Under misspecification, further sampling only increases the computational cost of producing biased samples. In this case, our test stops with probability $1$, preventing continued waste of computation.

To instantiate the test, we require compactness of $\cP_\pi$, which we establish in \Cref{lem:compactness_Ppi} in \Cref{sec:appendix_applications}. This yields the following corollary to \Cref{thm:optimality}.
\begin{corollary}\label{cor:applications_mcmc}
For any ergodic $Q\in \cP_\pi^\complement$ with stationary distribution $\pi_Q\neq \pi$, the stopping time $\tau_\alpha$ satisfies
\[
\limsup_{\alpha\to 0}
\frac{\expecover{Q}{\tau_\alpha}}{\log(1/\alpha)}
\le
\frac{1}{D_{\cM}^{\inf}(Q,\cP_\pi)}.
\]
\end{corollary}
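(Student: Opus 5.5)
The corollary follows once we check that the pair $(\cP_\pi,\cP_\pi^\complement)$ meets the hypotheses of \Cref{thm:optimality}. That theorem requires three things: a compact null set $\cP$, disjointness $\cP\cap\cQ=\varnothing$, and an ergodic alternative $Q\in\cQ$. Disjointness holds by construction, since $\cQ=\cP_\pi^\complement$. Ergodicity of $Q$ is assumed in the statement. The only substantive ingredient is therefore compactness of $\cP_\pi$, which is \Cref{lem:compactness_Ppi}. Given these, \Cref{thm:optimality} applied verbatim with $\cP=\cP_\pi$ yields the displayed $\limsup$ bound.

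For completeness I would sketch why $\cP_\pi$ is compact. The set $\cM$ is closed and bounded in $\reals^{m\times m}$: it is cut out by the nonnegativity constraints $P(i,j)\ge 0$ and the linear equalities $P\mathbf{1}=\mathbf{1}$, and every entry lies in $[0,1]$. The stationarity constraint $\pi P=\pi$ is a finite family of linear equalities in the entries of $P$, so it defines a closed set. Hence $\cP_\pi$ is a closed subset of a compact set, and by Heine--Borel it is compact. It is also nonempty, since $\mathbf{1}\pi\in\cP_\pi$, and convex, which fits the tractability discussion in \Cref{sec:tractability}.

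The one point needing care is that the bound is meaningful, i.e.\ that $D_{\cM}^{\inf}(Q,\cP_\pi)>0$; otherwise the right-hand side is $+\infty$ and the statement is vacuous. Here the hypothesis $\pi_Q\neq\pi$ is what matters, and I see two ways to use it.
\begin{itemize}
\item \emph{Via \Cref{prop:stationary_expec_dm}.} For $P\in\cP_\pi$ we have $\pi_P=\pi$ whenever $P$ is ergodic. Choose $g$ with $\expecover{\pi_Q}{g}\ne\expecover{\pi}{g}$. The proposition then gives $D_{\cM}(Q,P)\ge c_P>0$, with $c_P$ depending on $\|\omega_{P,g}\|_\infty$.
\item \emph{Via lower semicontinuity and compactness.} $D_{\cM}(Q,\cdot)$ is lower semicontinuous on the compact set $\cP_\pi$, so its infimum is attained at some $P^\star$. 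If $D_{\cM}(Q,P^\star)=0$, then $Q(i,\cdot)=P^\star(i,\cdot)$ for every $i$, because $\pi_Q$ has full support. So $Q=P^\star\in\cP_\pi$, a contradiction.
\end{itemize}
The second route is cleaner: it sidesteps the fact that $\omega_{P,g}$ is not uniformly controlled over non-ergodic members of $\cP_\pi$. I would present it as the main obstacle, although it is short.
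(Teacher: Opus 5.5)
Your proposal is correct and follows the paper's argument. The paper also obtains the corollary by verifying compactness of $\cP_\pi$ (\Cref{lem:compactness_Ppi}, proved exactly as you sketch: $\{P:\pi P=\pi\}$ is a closed linear preimage, it is intersected with $\cM$, and Heine--Borel is applied) and then invoking \Cref{thm:optimality} with $\cP=\cP_\pi$ and $\cQ=\cP_\pi^\complement$. Your extra check that $D_{\cM}^{\inf}(Q,\cP_\pi)>0$, via lower semicontinuity and attainment on the compact set, is valid; however, the inequality does not need it, since it holds trivially when the right-hand side is $+\infty$, so it shows non-vacuousness rather than overcoming an obstacle.
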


\textbf{Interpretation in terms of estimation bias.}
As discussed in \Cref{sec:tractability}, \Cref{prop:stationary_expec_dm} can be used to obtain interpretable lower bounds on $D_{\cM}(Q,P)$. In this application, for any function of interest $g$, the quantity
\[
\Delta \triangleq \big|\expecover{\pi_Q}{g}-\expecover{\pi}{g}\big|
\]
is precisely the asymptotic bias of the MCMC estimator under $Q$. Applying \Cref{prop:stationary_expec_dm} yields that $D_{\cM}(Q,P)$ can be lower bounded by a term of order $\Delta^2$, and hence the dominant scaling of the complexity term behaves as $1/D_{\cM}(Q,P)=O(1/\Delta^2)$.

\textbf{Experimental validation.}
We fix a target distribution $\pi \in \Delta_5$ and set $\alpha=0.05$. We compare two kernels: a valid sampler $Q_{\text{good}}\in\cP_\pi$ and a misspecified sampler $Q_{\text{bad}}\notin\cP_\pi$. As shown in \Cref{fig:mcmc_main}, under the alternative ($Q_{\text{bad}}$) the statistic grows approximately linearly and tracks the rate $D_{\cM}^{\inf}(Q_{\text{bad}},\cP_\pi)$. Under the null ($Q_{\text{good}}$), the statistic remains below the boundary. Across all trials, we observed no false rejections, suggesting that the boundary may be conservative in practice. Additional details are provided in \Cref{sec:appendix_experiments}.

\begin{figure}[t!]
    \centering
    \includegraphics[width=\linewidth]{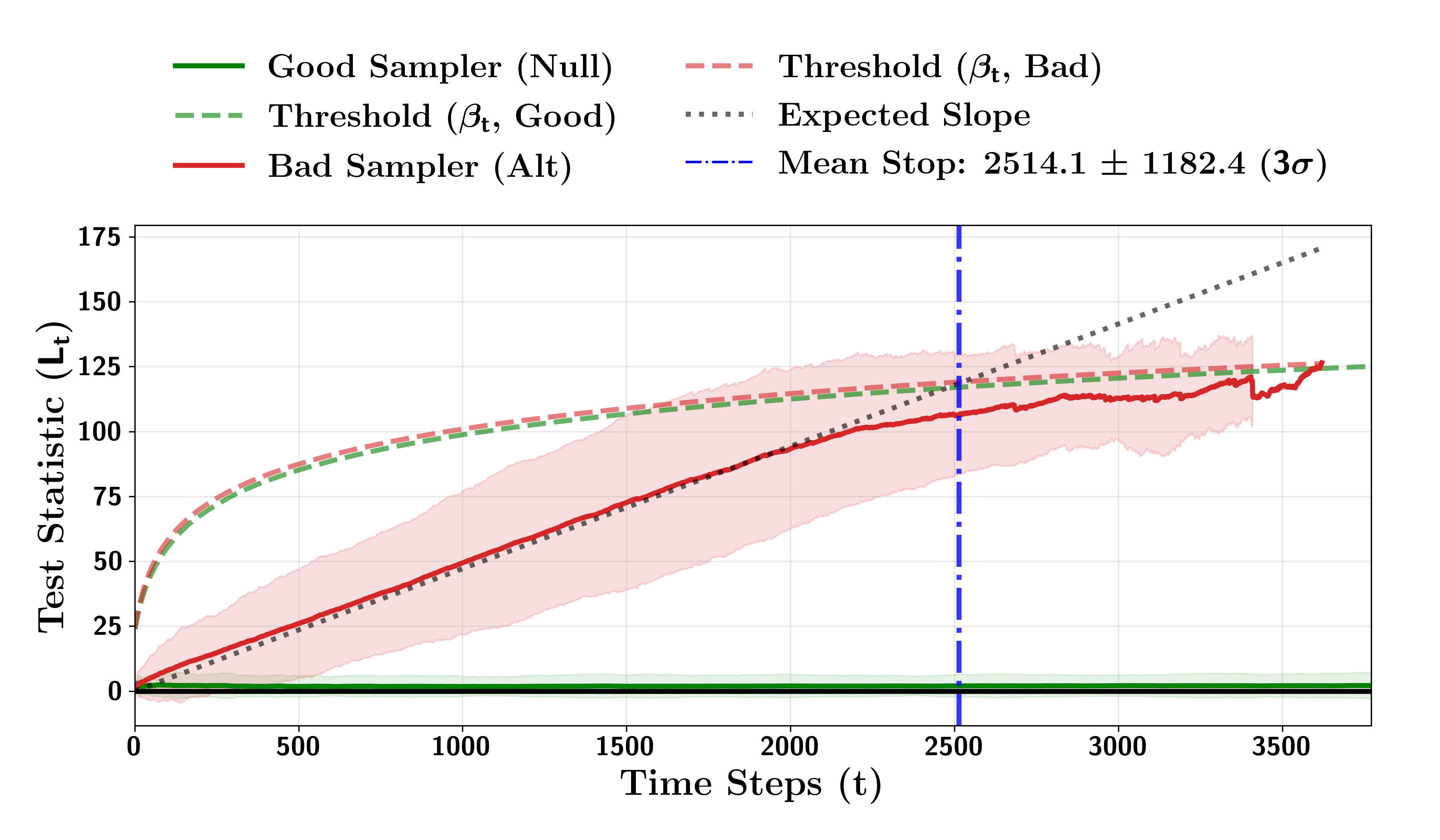}
    \caption{Test statistic trajectories over time for $Q_{\text{bad}}$ (red) and $Q_{\text{good}}$ (green), aggregated over $100$ runs. Solid curves and shaded regions show the mean $\pm 3\sigma$; dotted colored curves denote the boundary. The dotted black line shows the theoretical slope $D_{\cM}^{\inf}(Q_{\text{bad}},\cP_\pi)$, and the vertical line marks the mean stopping time for $Q_{\text{bad}}$.}
    \label{fig:mcmc_main}
\end{figure}

\subsection{Testing Linearity of MDPs}\label{sec:subsection_applications_mdp}
We consider the problem of testing a commonly used structural assumption in reinforcement learning: that the transition dynamics and rewards admit a linear parameterization with respect to a known feature map.

Formally, consider an infinite-horizon MDP $\mathsf{M}$ with finite state space $\cS$ and finite action space $\cA$. The MDP is specified by transition probabilities $p_{\mathsf{M}}(s,a,s')$ and reward distributions $q_{\mathsf{M}}(s,a)$, with expected reward $r_{\mathsf{M}}(s,a)=\expec{q_{\mathsf{M}}(s,a)}$. A policy $\pi_{\mathsf{M}}(a'|s')$ specifies the probability of selecting action $a'$ in state $s'$.

Let $T_{\mathsf{M}}\in\reals^{|\cS||\cA|\times|\cS|}$ denote the transition matrix with entries
$T_{\mathsf{M}}((s,a),s')\triangleq p_{\mathsf{M}}(s,a,s')$.
Let $\Pi_{\mathsf{M}}\in\reals^{|\cS|\times|\cS||\cA|}$ denote the policy matrix with entries
$\Pi_{\mathsf{M}}(s,(s',a'))\triangleq \pi_{\mathsf{M}}(a'|s)\, \I\{s=s'\}$.

\begin{figure}[t!]
    \centering
    \includegraphics[width=0.8\linewidth]{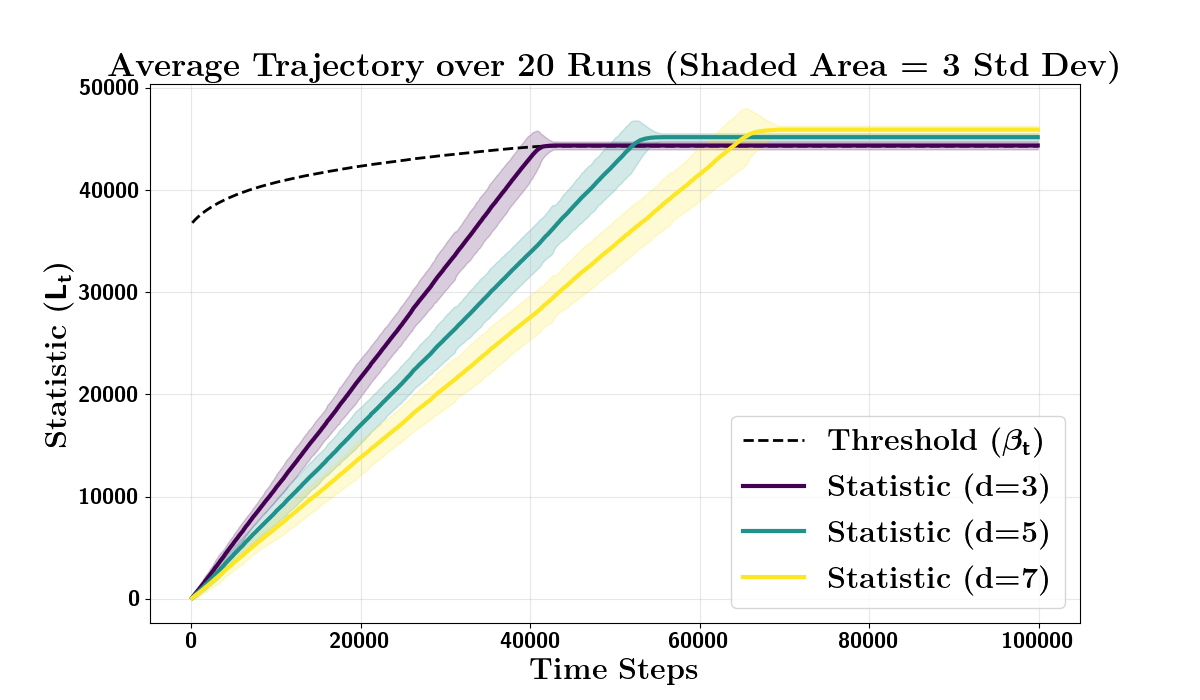}
    \caption{Mean statistic trajectory aggregated over $20$ runs. Shaded regions indicate $\pm 3$ standard deviations.}
    \label{fig:mdp_main}
\end{figure}

\begin{definition}[Linear MDP~\citep{jin2020provably}]
An MDP $\mathsf{M}$ is \emph{linear} if there exist $\mu_{\mathsf{M}}\in\reals^{|\cS|\times d}$ and $\theta_{\mathsf{M}}\in\reals^d$ such that
\[
T_{\mathsf{M}}=\Phi \mu_{\mathsf{M}}^\top,
\qquad
r_{\mathsf{M}}=\Phi \theta_{\mathsf{M}},
\]
where $\Phi\in\reals^{|\cS||\cA|\times d}$ is a known feature matrix whose row for $(s,a)$ equals $\phi(s,a)^\top$ for a given map $\phi:\cS\times\cA\to\reals^d$. We assume $\|\theta_{\mathsf{M}}\|_2\le \sqrt d$, $\|\phi(s,a)\|_2\le 1$ for all $(s,a)$, and $\|\mu_{\mathsf{M}}(s)\|_2\le \sqrt d$ for all $s\in\cS$.
\end{definition}

Executing policy $\pi_{\mathsf{M}}$ in $\mathsf{M}$ induces a Markov chain $P_{\mathsf{M},\pi}$ on state--action pairs with transition matrix
\[
P_{\mathsf{M},\pi}\big((s,a),(s',a')\big)
=
p_{\mathsf{M}}(s,a,s')\,\pi_{\mathsf{M}}(a'|s').
\]
We assume the induced chain is ergodic. For a linear MDP, this induced kernel can be written as
\[
P_{\mathsf{M},\pi}=\Phi \mu_{\mathsf{M}}^\top \Pi_{\mathsf{M}}.
\]

Since $\Phi$ and $\Pi_{\mathsf{M}}$ are known, linearity imposes explicit structural constraints on the induced transitions. We encode these constraints through the null class
\[
\cP_L
=
\Big\{
P:\ P=\Phi \mu^\top \Pi_{\mathsf{M}},\ \mu\in\reals^{|\cS|\times d},\ \|\mu\|_{2,\infty}\le \sqrt d
\Big\}.
\]
We test $\cP=\cP_L$ against $\cQ=\cP_L^\complement$. We establish compactness of $\cP_L$ in \Cref{lem:compactness_plin} in \Cref{sec:appendix_applications}, which yields the following corollary of \Cref{thm:optimality}.
\begin{corollary}\label{cor:applications_mdp}
Suppose $\mathsf{M}$ is not linear (with respect to $\Phi$), and that policy $\pi_{\mathsf{M}}$ induces an ergodic Markov chain $Q_{\mathsf{M},\pi}$ on state--action pairs. Then
\[
\limsup_{\alpha\to 0}
\frac{\expecover{Q_{\mathsf{M},\pi}}{\tau_\alpha}}{\log(1/\alpha)}
\le
\frac{1}{D_{\cM}^{\inf}(Q_{\mathsf{M},\pi},\cP_L)}.
\]
\end{corollary}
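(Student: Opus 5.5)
This follows from \Cref{thm:optimality}, applied with $\cP=\cP_L$, $\cQ=\cP_L^\complement$ and the alternative instance $Q=Q_{\mathsf{M},\pi}$. The theorem needs three things: a compact null set, an alternative disjoint from it, and an ergodic data-generating chain in the alternative.

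\textbf{Step 1: compactness and disjointness.} Compactness of $\cP_L$ is supplied by \Cref{lem:compactness_plin}. The argument there is the standard one. The parameter set $\{\mu:\|\mu\|_{2,\infty}\le\sqrt d\}$ is compact, and the map $\mu\mapsto\Phi\mu^\top\Pi_{\mathsf{M}}$ is linear, hence continuous. Intersecting the image with the closed set $\cM$ of stochastic matrices gives a compact set. Disjointness of $\cP$ and $\cQ$ holds by construction, since $\cQ=\cP_L^\complement$.

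\textbf{Step 2: the true chain lies in the alternative.} I must show $Q_{\mathsf{M},\pi}\notin\cP_L$. Suppose $Q_{\mathsf{M},\pi}=\Phi\mu^\top\Pi_{\mathsf{M}}$ for some admissible $\mu$. Then for every $(s,a)$ and $(s',a')$,
\[
p_{\mathsf{M}}(s,a,s')\,\pi_{\mathsf{M}}(a'|s')=\phi(s,a)^\top\mu(s')\,\pi_{\mathsf{M}}(a'|s').
\]
Ergodicity forces every state $s'$ to be visited, so each $s'$ has some action $a'$ with $\pi_{\mathsf{M}}(a'|s')>0$. Dividing by that factor gives $T_{\mathsf{M}}=\Phi\mu^\top$, so the transitions are linear. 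This contradicts the hypothesis that $\mathsf{M}$ is not linear.

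One subtlety needs care here. Non-linearity of $\mathsf{M}$ could come from the reward condition $r=\Phi\theta$ failing rather than the transition condition. The null class $\cP_L$ only encodes transitions, so in that case $Q_{\mathsf{M},\pi}$ could still lie in $\cP_L$. I would therefore interpret "not linear" as failure of the transition condition, which is what $\cP_L$ tests.

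\textbf{Step 3: apply the theorem.} With $Q_{\mathsf{M},\pi}\in\cQ$ ergodic (assumed), \Cref{thm:optimality} gives the stated $\limsup$ bound directly.

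\textbf{Step 4: check the bound is non-trivial.} I would also confirm that $D_{\cM}^{\inf}(Q_{\mathsf{M},\pi},\cP_L)>0$, so the bound is finite. Suppose the infimum were $0$. Compactness lets us extract a minimizing sequence converging to some $P\in\cP_L$. Lower semicontinuity of the KL divergence then gives $D_{\cM}(Q,P)=0$. Because the stationary distribution of $Q$ is strictly positive, this forces $Q=P$ row by row, contradicting Step 2.

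The main obstacle is Step 2: correctly relating non-linearity of the MDP to non-membership of the induced chain in $\cP_L$, in particular the division by $\pi_{\mathsf{M}}(a'|s')$ and the reward-versus-transition subtlety.
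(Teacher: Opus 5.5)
Your proposal is correct and follows the paper's route: the corollary is simply \Cref{thm:optimality} applied with $\cP=\cP_L$, whose compactness is \Cref{lem:compactness_plin}. Your Steps 2 and 4 make explicit what the paper leaves implicit, but neither is needed for the inequality itself. If $Q_{\mathsf{M},\pi}\in\cP_L$ (for example, when only the reward condition fails), then $D_{\cM}^{\inf}(Q_{\mathsf{M},\pi},\cP_L)=0$ and the right-hand side is $+\infty$. Also, an action $a'$ with $\pi_{\mathsf{M}}(a'|s')>0$ exists simply because $\pi_{\mathsf{M}}(\cdot|s')$ is a probability distribution, not because of ergodicity.
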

Here, $D_{\cM}^{\inf}(Q_{\mathsf{M},\pi},\cP_L)$ quantifies how far the induced dynamics are from admitting the prescribed linear factorization.

\textbf{Experimental validation.}
We evaluate our procedure on the Mountain Car environment as implemented in \texttt{gym}~\citep{gymansium}. We discretize the state space ($|\cS|=8\times 8$) and use $|\cA|=3$ actions with a uniformly random policy. We generate radial basis function features of varying dimensions and plot the statistic over time in \Cref{fig:mdp_main}. In all cases, the statistic grows steadily, indicating misspecification and rejection of the linear MDP hypothesis. The lower-dimensional representation ($d=3$) exhibits the steepest growth and thus rejects earlier than higher-dimensional representations ($d=5,7$). Additional experimental results, including a comparison with the algorithm of \citet{fields2025sequentialonesidedhypothesistesting}, are in \Cref{sec:appendix_experiments}.


\section{Discussion and Future Works}

We developed a rigorous framework for one-sided sequential hypothesis testing with Markovian data. Our results substantially generalized prior work by allowing a composite null to be tested against a disjoint, composite alternative. We introduced new tools that yielded the first non-asymptotic, instance-dependent lower bounds in this setting, and we proposed a martingale-based test whose asymptotic sample complexity matched these bounds. We illustrated the framework through applications to misspecification testing in MCMC and to structural testing in reinforcement learning. We also established a Pinsker-type inequality for Markov divergences via the Poisson equation, which may be of independent interest.

Several directions remain open. First, it would be valuable to extend the techniques of \citet{fields2025sequentialonesidedhypothesistesting} to the general composite setting; this likely requires new low-regret learning procedures under Markovian dependence. Second, our results highlight a statistical--computational tradeoff: while the approximation in \Cref{sec:tractability} yields computationally tractable tests, it may be sample-inefficient. It remains unclear whether one can achieve both computational efficiency and statistical optimality for general (possibly non-convex) $\cP$. Along the same lines, it would be interesting to understand the impact of using the mixture martingale $M_t$ (see \cref{sec:mixture_martingale}) directly instead of the computationally tractable lower bound. Third, it is natural to move beyond finite state spaces to countable-state Markov chains. Finally, an important extension is sequential testing under hidden Markov observations, where only a noisy function of the latent chain is observed and the resulting process need not be Markov. This raises major analytical challenges for both $\alpha$-correct martingale constructions and sharp, instance-dependent lower bounds. 


\section*{Acknowledgements}
The authors would like to thank the anonymous reviewers for their insightful comments and suggestions. SA acknowledges the generous support from the Pratiksha Trust, Bangalore, through the Young Investigator Award, and the DST INSPIRE Faculty Grant IFA24-ENG-389. DB acknowledges the
support of the ANR JCJC project REPUBLIC (ANR-22-CE23-0003-01) and PEPR project FOUNDRY (ANR23-PEIA-0003). We also acknowledge the Inria-IISc associate team FOSSIL (DRI-012643) for supporting the collaboration.

 

\bibliography{references}

@book{lehmann2005testing,
  title={Testing statistical hypotheses},
  author={Lehmann, Erich Leo and Romano, Joseph P},
  year={2005},
  publisher={Springer}
}

@book{goldreich2017introduction,
  title={Introduction to property testing},
  author={Goldreich, Oded},
  year={2017},
  publisher={Cambridge University Press}
}

@article{bengio1999markovian,
  title={{M}arkovian models for sequential data},
  author={Bengio, Yoshua and others},
  journal={Neural computing surveys},
  volume={2},
  number={199},
  pages={129--162},
  year={1999}
}

@article{nagaraj2020least,
  title={Least squares regression with {M}arkovian data: Fundamental limits and algorithms},
  author={Nagaraj, Dheeraj and Wu, Xian and Bresler, Guy and Jain, Prateek and Netrapalli, Praneeth},
  journal={Advances in neural information processing systems},
  volume={33},
  pages={16666--16676},
  year={2020}
}

@article{natarajan2003large,
  title={Large deviations, hypotheses testing, and source coding for finite {M}arkov chains},
  author={Natarajan, S},
  journal={IEEE Transactions on Information Theory},
  volume={31},
  number={3},
  pages={360--365},
  year={2003},
  publisher={IEEE}
}

@article{beutler1985optimal,
  title={Optimal policies for controlled {M}arkov chains with a constraint},
  author={Beutler, Frederick J and Ross, Keith W},
  journal={Journal of mathematical analysis and applications},
  volume={112},
  number={1},
  pages={236--252},
  year={1985},
  publisher={Elsevier}
}

@book{sutton1998reinforcement,
  title={Reinforcement learning: An introduction},
  author={Sutton, Richard S and Barto, Andrew G and others},
  volume={1},
  number={1},
  year={1998},
  publisher={MIT press Cambridge}
}

@inproceedings{ouhamma2023bilinear,
  title={Bilinear exponential family of mdps: frequentist regret bound with tractable exploration \& planning},
  author={Ouhamma, Reda and Basu, Debabrota and Maillard, Odalric},
  booktitle={Proceedings of the AAAI Conference on Artificial Intelligence},
  volume={37},
  number={8},
  pages={9336--9344},
  year={2023}
}

@article{rabiner2003introduction,
  title={An introduction to hidden {M}arkov models},
  author={Rabiner, Lawrence and Juang, Biinghwang},
  journal={ieee assp magazine},
  volume={3},
  number={1},
  pages={4--16},
  year={2003},
  publisher={IEEE}
}

@article{fields2025sequentialonesidedhypothesistesting,
  title={Sequential One-Sided Hypothesis Testing of {M}arkov Chains},
  author={Fields, Greg and Javidi, Tara and Shekhar, Shubhanshu},
  journal={arXiv preprint arXiv:2501.13187},
  year={2025}
}

@article{Phatarfod1965,
  title = {Sequential analysis of dependent observations. I},
  volume = {52},
  ISSN = {1464-3510},
  DOI = {10.1093/biomet/52.1-2.157},
  number = {1–2},
  journal = {Biometrika},
  publisher = {Oxford University Press (OUP)},
  author = {Phatarfod,  R. M.},
  year = {1965},
  month = jun,
  pages = {157–166}
}

@article{Fauss2020,
  title = {Minimax optimal sequential hypothesis tests for {M}arkov processes},
  volume = {48},
  ISSN = {0090-5364},
  DOI = {10.1214/19-aos1899},
  number = {5},
  journal = {The Annals of Statistics},
  publisher = {Institute of Mathematical Statistics},
  author = {Fauß,  Michael and Zoubir,  Abdelhak M. and Poor,  H. Vincent},
  year = {2020},
  month = oct 
}

@article{Moustakides1999,
  title = {Extension of {W}ald’s first lemma to {M}arkov processes},
  volume = {36},
  number = {1},
  journal = {Journal of Applied Probability},
  publisher = {Cambridge University Press (CUP)},
  author = {Moustakides,  George V.},
  year = {1999},
  month = mar,
  pages = {48–59}
}

@article{Takeuchi2013,
  title = {Properties of Jeffreys Mixture for {M}arkov Sources},
  volume = {59},
  number = {1},
  journal = {IEEE Transactions on Information Theory},
  publisher = {Institute of Electrical and Electronics Engineers (IEEE)},
  author = {Takeuchi,  Jun’ichi and Kawabata,  Tsutomu and Barron,  Andrew R.},
  year = {2013},
  month = jan,
  pages = {438–457}
}

@book{LevinPeresbook,
  title    = "{M}arkov chains and Mixing Times: Second edition",
  author   = "Levin, David Asher and Peres, Yuval and Wilmer, Elizabeth Lee",
  year     =  2017,
  language = "en"
}

@article{al2021navigating,
  title={Navigating to the best policy in {M}arkov decision processes},
  author={Al Marjani, Aymen and Garivier, Aur{\'e}lien and Proutiere, Alexandre},
  journal={Advances in Neural Information Processing Systems},
  volume={34},
  pages={25852--25864},
  year={2021}
}

@article{jonsson2020planning,
  title={Planning in {M}arkov decision processes with gap-dependent sample complexity},
  author={Jonsson, Anders and Kaufmann, Emilie and M{\'e}nard, Pierre and Darwiche Domingues, Omar and Leurent, Edouard and Valko, Michal},
  journal={Advances in Neural Information Processing Systems},
  volume={33},
  pages={1253--1263},
  year={2020}
}

@book{cover2012elements,
  title={Elements of Information Theory},
  author={Cover, T.M. and Thomas, J.A.},
  isbn={9781118585771},
  lccn={2005047799},
  year={2012},
  publisher={Wiley}
}

@InProceedings{wolfer19a,
  title = 	 {Minimax Learning of Ergodic {M}arkov Chains},
  author =       {Wolfer, Geoffrey and Kontorovich, Aryeh},
  booktitle = 	 {Proceedings of the 30th International Conference on Algorithmic Learning Theory},
  pages = 	 {904--930},
  year = 	 {2019},
  editor = 	 {Garivier, Aurélien and Kale, Satyen},
  volume = 	 {98},
  series = 	 {Proceedings of Machine Learning Research},
  month = 	 {22--24 Mar},
  publisher =    {PMLR}
}

@article{paulin2018concentrationinequalitiesmarkovchains,
author = {Daniel Paulin},
title = {{Concentration inequalities for {M}arkov chains by Marton couplings and spectral methods}},
volume = {20},
journal = {Electronic Journal of Probability},
number = {none},
publisher = {Institute of Mathematical Statistics and Bernoulli Society},
pages = {1 -- 32},
year = {2015},
}

@article{Gyori2015,
  title = {Hypothesis testing for {M}arkov chain Monte Carlo},
  volume = {26},
  ISSN = {1573-1375},
  DOI = {10.1007/s11222-015-9594-1},
  number = {6},
  journal = {Statistics and Computing},
  publisher = {Springer Science and Business Media LLC},
  author = {Gyori,  Benjamin M. and Paulin,  Daniel},
  year = {2015},
  month = jul,
  pages = {1281–1292}
}

@article{Rabinovich2020,
  title = {Function-Specific Mixing Times and Concentration Away from Equilibrium},
  volume = {15},
  ISSN = {1936-0975},
  DOI = {10.1214/19-ba1151},
  number = {2},
  journal = {Bayesian Analysis},
  publisher = {Institute of Mathematical Statistics},
  author = {Rabinovich,  Maxim and Ramdas,  Aaditya and Jordan,  Michael I. and Wainwright,  Martin J.},
  year = {2020},
  month = jun 
}

@book{berge1997topological,
  title={Topological Spaces: Including a Treatment of Multi-valued Functions, Vector Spaces, and Convexity},
  author={Berge, C.},
  isbn={9780486696539},
  lccn={97013145},
  series={Dover Books on Mathematics},
  year={1997},
  publisher={Dover Publications}
}

@book{Polyanskiy_Wu_2025, 
    place={Cambridge}, 
    title={Information Theory: From Coding to Learning}, 
    publisher={Cambridge University Press}, 
    author={Polyanskiy, Yury and Wu, Yihong}, 
    year={2025}
}

@ARTICLE{posner1975,
  author={Posner, E.},
  journal={IEEE Transactions on Information Theory}, 
  title={Random coding strategies for minimum entropy}, 
  year={1975},
  volume={21},
  number={4},
  pages={388-391},
  doi={10.1109/TIT.1975.1055416}
}

@article{karthik2024optimal,
  title={Optimal best arm identification with fixed confidence in restless bandits},
  author={Karthik, P.~N. and Tan, Vincent Y.~F. and Mukherjee, Arpan and Tajer, Ali},
  journal={IEEE Transactions on Information Theory},
  volume={70},
  number={10},
  pages={7349--7384},
  year={2024},
  publisher={IEEE}
}

@article{chatterjee2025spectralgapnonreversiblemarkov,
  title={Spectral gap of nonreversible {M}arkov chains},
  author={Chatterjee, Sourav},
  journal={The Annals of Applied Probability},
  volume={35},
  number={4},
  pages={2644--2677},
  year={2025},
  publisher={Institute of Mathematical Statistics}
}

@article{fill91,
  title = {Eigenvalue Bounds on Convergence to Stationarity for Nonreversible Markov Chains,  with an Application to the Exclusion Process},
  volume = {1},
  ISSN = {1050-5164},
  DOI = {10.1214/aoap/1177005981},
  number = {1},
  journal = {The Annals of Applied Probability},
  publisher = {Institute of Mathematical Statistics},
  author = {Fill,  James Allen},
  year = {1991},
  month = Feb 
}

@article{huang2024bernstein,
  title={Bernstein-type Inequalities for {M}arkov Chains and {M}arkov Processes: A Simple and Robust Proof},
  author={Huang, De and Li, Xiangyuan},
  journal={arXiv preprint arXiv:2408.04930},
  year={2024}
}

@article{lorden1976sprt,
author = {Gary Lorden},
title = {{2-SPRT'S and the Modified Kiefer-Weiss Problem of Minimizing an Expected Sample Size}},
volume = {4},
journal = {The Annals of Statistics},
number = {2},
publisher = {Institute of Mathematical Statistics},
pages = {281 -- 291},
year = {1976},
doi = {10.1214/aos/1176343407}
}

@book{kreyszig1978introductory,
  title={Introductory Functional Analysis with Applications},
  author={Kreyszig, E.},
  isbn={9780471507314},
  lccn={77002560},
  series={Wiley classics library},
  year={1978},
  publisher={Wiley}
}

@article{ariu2025policytestingmarkovdecision,
  title={Policy Testing in {M}arkov Decision Processes},
  author={Ariu, Kaito and Wang, Po-An and Proutiere, Alexandre and Abe, Kenshi},
  journal={arXiv preprint arXiv:2505.15342},
  year={2025}
}

@inproceedings{al2021adaptive,
  title={Adaptive sampling for best policy identification in {M}arkov decision processes},
  author={Al Marjani, Aymen and Proutiere, Alexandre},
  booktitle={International Conference on Machine Learning},
  pages={7459--7468},
  year={2021},
  organization={PMLR}
}

@article{Phatarfod_1971, title={Sequential Tests for Normal {M}arkov Sequence}, volume={12}, number={4}, journal={Journal of the Australian Mathematical Society}, author={Phatarfod, R. M.}, year={1971}, pages={433–440}}

@inproceedings{schmitz1983sequential,
  title={Sequential probability ratio tests for homogeneous {M}arkov chains},
  author={Schmitz, Norbert and S{\"u}selbeck, Benno},
  booktitle={Mathematical Learning Models—Theory and Algorithms: Proceedings of a Conference},
  pages={191--202},
  year={1983},
  organization={Springer}
}

@article{dimitriadis2007nonparametric,
  title={A nonparametric sequential test for data with {M}arkov dependence},
  author={Dimitriadis, Basile and Kazakos, Dimitri},
  journal={IEEE Transactions on Aerospace and Electronic Systems},
  number={3},
  pages={338--347},
  year={2007},
  publisher={IEEE}
}

@article{farrell1964asymptotic,
  title={Asymptotic behavior of expected sample size in certain one sided tests},
  author={Farrell, Roger H},
  journal={The Annals of Mathematical Statistics},
  pages={36--72},
  year={1964},
  publisher={JSTOR}
}

@article{robbins1974expected,
  title={The expected sample size of some tests of power one},
  author={Robbins, Herbert and Siegmund, David},
  journal={The Annals of Statistics},
  volume={2},
  number={3},
  pages={415--436},
  year={1974},
  publisher={Institute of Mathematical Statistics}
}

@inproceedings{KieferS15,
author = {Kiefer, Stefan and Sistla, A. Prasad},
title = {Distinguishing Hidden {M}arkov Chains},
year = {2016},
address = {New York, NY, USA},
booktitle = {Proceedings of the 31st Annual ACM/IEEE Symposium on Logic in Computer Science},
pages = {66–75},
numpages = {10},
location = {New York, NY, USA},
series = {LICS '16}
}

@inproceedings{jin2020provably,
  title={Provably efficient reinforcement learning with linear function approximation},
  author={Jin, Chi and Yang, Zhuoran and Wang, Zhaoran and Jordan, Michael I},
  booktitle={Conference on learning theory},
  pages={2137--2143},
  year={2020},
  organization={PMLR}
}

@book{Douc2018,
	address = {Cham},
	series = {Springer {Series} in {Operations} {Research} and {Financial} {Engineering}},
	title = {Markov chains},
	isbn = {9783319977041 9783319977034},
	language = {eng},
	publisher = {Springer Nature},
	author = {Douc, Randal and Moulines, Eric and Priouret, Pierre and Soulier, Philippe},
	year = {2018},
}

@InProceedings{pmlr-v134-agrawal21a,
  title = 	 {Regret Minimization in Heavy-Tailed Bandits},
  author =       {Agrawal, Shubhada and Juneja, Sandeep K. and Koolen, Wouter M.},
  booktitle = 	 {Proceedings of Thirty Fourth Conference on Learning Theory},
  pages = 	 {26--62},
  year = 	 {2021},
  volume = 	 {134},
  month = 	 {15--19 Aug},
  publisher =    {PMLR}}

@article{agrawal2025stoppingtimespoweronesequential,
  title={On Stopping Times of Power-one Sequential Tests: Tight Lower and Upper Bounds},
  author={Agrawal, Shubhada and Ramdas, Aaditya},
  journal={arXiv preprint arXiv:2504.19952},
  year={2025}
}

@inproceedings{das2025frappe,
  title={FraPPE: Fast and Efficient Preference-Based Pure Exploration},
  author={Das, Udvas and Shukla, Apurv and Basu, Debabrota},
  booktitle={The Thirty-ninth Annual Conference on Neural Information Processing Systems}, 
  year={2025}
}

@inproceedings{carlsson2024pure,
  title={Pure exploration in bandits with linear constraints},
  author={Carlsson, Emil and Basu, Debabrota and Johansson, Fredrik and Dubhashi, Devdatt},
  booktitle={International Conference on Artificial Intelligence and Statistics},
  pages={334--342},
  year={2024},
  organization={PMLR}
}

@inproceedings{moulos2019,
 author = {Moulos, Vrettos},
 booktitle = {Advances in Neural Information Processing Systems},
 editor = {H. Wallach and H. Larochelle and A. Beygelzimer and F. d\textquotesingle Alch\'{e}-Buc and E. Fox and R. Garnett},
 pages = {},
 publisher = {Curran Associates, Inc.},
 title = {Optimal Best {M}arkovian Arm Identification with Fixed Confidence},
 volume = {32},
 year = {2019}
}

@article{anantharam2003asymptotically,
  title={Asymptotically efficient allocation rules for the multiarmed bandit problem with multiple plays-Part II: {M}arkovian rewards},
  author={Anantharam, Venkatachalam and Varaiya, Pravin and Walrand, Jean},
  journal={IEEE Transactions on Automatic Control},
  volume={32},
  number={11},
  pages={977--982},
  year={2003},
  publisher={IEEE}
}

@inproceedings{wagenmaker2022beyond,
  title={Beyond no regret: Instance-dependent pac reinforcement learning},
  author={Wagenmaker, Andrew J and Simchowitz, Max and Jamieson, Kevin},
  booktitle={Conference on Learning Theory},
  pages={358--418},
  year={2022},
  organization={PMLR}
}

@article{baum2002sequential,
  title={A sequential procedure for multihypothesis testing},
  author={Baum, Carl W and Veeravalli, Venugopal V},
  journal={IEEE Transactions on Information Theory},
  volume={40},
  number={6},
  pages={1994--2007},
  year={2002},
  publisher={IEEE}
}

@article{lorden1977nearly,
  title={Nearly-optimal sequential tests for finitely many parameter values},
  author={Lorden, Gary},
  journal={The Annals of Statistics},
  volume={5},
  number={1},
  pages={1--21},
  year={1977},
  publisher={Institute of Mathematical Statistics}
}

@article{Roy2020,
  title = {Convergence Diagnostics for {M}arkov Chain Monte Carlo},
  volume = {7},
  number = {1},
  journal = {Annual Review of Statistics and Its Application},
  publisher = {Annual Reviews},
  author = {Roy,  Vivekananda},
  year = {2020},
  month = mar,
  pages = {387–412}
}

@article{Wald1945,
  title = {Sequential Tests of Statistical Hypotheses},
  volume = {16},
  ISSN = {0003-4851},
  DOI = {10.1214/aoms/1177731118},
  number = {2},
  journal = {The Annals of Mathematical Statistics},
  publisher = {Institute of Mathematical Statistics},
  author = {Wald,  A.},
  year = {1945},
  month = jun,
  pages = {117–186}
}

@article{darlingrobbins67,
 ISSN = {00278424},
 author = {D. A. Darling and Herbert Robbins},
 journal = {Proceedings of the National Academy of Sciences of the United States of America},
 number = {5},
 pages = {1188--1192},
 publisher = {National Academy of Sciences},
 title = {Iterated Logarithm Inequalities},
 urldate = {2026-01-25},
 volume = {57},
 year = {1967}
}

@book{ramdas2024hypothesis,
  title={Hypothesis testing with e-values},
  author={Ramdas, Aaditya and Wang, Ruodu},
  publisher={Foundations and Trends in Statistics},
  year={2025}
}

@inproceedings{
tuynman2024finding,
title={Finding good policies in average-reward {M}arkov Decision Processes without prior knowledge},
author={Adrienne Tuynman and R{\'e}my Degenne and Emilie Kaufmann},
booktitle={The Thirty-eighth Annual Conference on Neural Information Processing Systems},
year={2024}
}

@article{agrawal2021optimal,
  title={Optimal best-arm identification methods for tail-risk measures},
  author={Agrawal, Shubhada and Koolen, Wouter M and Juneja, Sandeep},
  journal={Advances in Neural Information Processing Systems},
  volume={34},
  pages={25578--25590},
  year={2021}
}

@article{diamond2016cvxpy,
  author  = {Steven Diamond and Stephen Boyd},
  title   = {{CVXPY}: {A} {P}ython-embedded modeling language for convex optimization},
  journal = {Journal of Machine Learning Research},
  year    = {2016},
  volume  = {17},
  number  = {83},
  pages   = {1--5},
}

@article{agrawal2018rewriting,
  author  = {Agrawal, Akshay and Verschueren, Robin and Diamond, Steven and Boyd, Stephen},
  title   = {A rewriting system for convex optimization problems},
  journal = {Journal of Control and Decision},
  year    = {2018},
  volume  = {5},
  number  = {1},
  pages   = {42--60},
}

@article{gymansium,
  publtype={informal},
  author={Mark Towers and Ariel Kwiatkowski and Jordan K. Terry and John U. Balis and Gianluca De Cola and Tristan Deleu and Manuel Goulão and Andreas Kallinteris and Markus Krimmel and Arjun KG and Rodrigo Perez-Vicente and Andrea Pierré and Sander Schulhoff and Jun Jet Tai and Hannah Tan and Omar G. Younis},
  title={Gymnasium: A Standard Interface for Reinforcement Learning Environments},
  year={2024},
  cdate={1704067200000},
  journal={CoRR},
  volume={abs/2407.17032}
}

@book{Vishnoi2021,
  title = {Algorithms for Convex Optimization},
  ISBN = {9781108741774},
  DOI = {10.1017/9781108699211},
  publisher = {Cambridge University Press},
  author = {Vishnoi,  Nisheeth K.},
  year = {2021},
  month = Sept 
}

@inproceedings{
russo2026incontext,
title={In-Context Learning for Pure Exploration},
author={Alessio Russo and Ryan Welch and Aldo Pacchiano},
booktitle={The Fourteenth International Conference on Learning Representations},
year={2026}
}

@book{bertsekas_dynamic_2017,
	address = {Belmont, Mass},
	edition = {Fourth edition},
	title = {Dynamic programming and optimal control. volume 1},
	isbn = {9781886529434},
	language = {eng},
	publisher = {Athena Scientific},
	author = {Bertsekas, Dimitri P.},
	year = {2017},
}

@article{Auer2002,
  title = {Finite-time Analysis of the Multiarmed Bandit Problem},
  volume = {47},
  ISSN = {1573-0565},
  DOI = {10.1023/a:1013689704352},
  number = {2-3},
  journal = {Machine Learning},
  publisher = {Springer Science and Business Media LLC},
  author = {Auer,  Peter and Cesa-Bianchi,  Nicolò and Fischer,  Paul},
  year = {2002},
  month = May,
  pages = {235–256}
}

@article{Capp2013,
  title = {Kullback–Leibler upper confidence bounds for optimal sequential allocation},
  volume = {41},
  ISSN = {0090-5364},
  DOI = {10.1214/13-aos1119},
  number = {3},
  journal = {The Annals of Statistics},
  publisher = {Institute of Mathematical Statistics},
  author = {Cappé,  Olivier and Garivier,  Aurélien and Maillard,  Odalric-Ambrym and Munos,  Rémi and Stoltz,  Gilles},
  year = {2013},
  month = June 
}

@InProceedings{HondaTakemura2010,
  author =       "Honda, Junya and Takemura, Akimichi",
  title =        "An Asymptotically Optimal Bandit Algorithm for Bounded Support Models",
  booktitle =    "Proceedings of the Twenty-third Conference on Learning Theory",
  year =         "2010",
  ISBN =         "978-0-9822529-2-5",
  editor =    "Kalai, Adam Tauman and  Mohri, Mehryar",
  publisher = "Omnipress",
  pages =     "67--79"
}
\bibliographystyle{icml2026}

\appendix

\section{Proofs of Results Appearing in \Cref{sec:lowerbound}}\label{sec:appendix_lowerbound}
This appendix builds the requisite tools and then proves the lower bound, \Cref{thm:lower_bound}. \Cref{sec:subsection_walds} provides a recap of the results from \citet{Moustakides1999} on a Wald-type identity for Markovian data, and an application of the same to our problem setting. We show that the latter leads to certain auxiliary terms which arise as solutions to the Poisson equation (PE). In \Cref{sec:subsection_controlling}, we build the primary tool required to obtain a bound on the magnitude of the auxiliary terms via the spectral properties of the underlying Markov chain. Finally, we prove the lower bound in \Cref{sec:subsection_lowerbound}.

\subsection{Wald's Lemma for Markovian Data}\label{sec:subsection_walds}

Towards building our machinery, we first recap the Wald's lemma for Markov chains from \citet{Moustakides1999}. 

\begin{lemma}[$\text{\citet[Lemma 2.3]{Moustakides1999}}$]\label{lem:poisson_equation}
    Let $P\in\cM$ be aperiodic and irreducible, with a unique stationary distribution $\pi \in \mathbb{R}^{1\times m}$. Let $f \in \reals^m$. Consider the PE with respect to the unknown $\nu \in \reals^m$, given by
    $$
    (P-I)\nu =  - (P - \mathbf{1}_{m\times 1}\pi)f,
    $$
    where $\mathbf{1}$ denotes the all-ones vector, and $\nu$ satisfies the constraint
    $\pi\nu = 0$. Then, the above system of equations possesses a solution given by
    $$
    \nu = \sum_{n=1}^\infty (P^n - \mathbf{1}\pi)f.
    $$
\end{lemma}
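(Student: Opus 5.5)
We need to prove Lemma (Moustakides): $\nu=\sum_{n\ge1}(P^n-\mathbf{1}\pi)f$ converges, solves $(P-I)\nu=-(P-\mathbf{1}\pi)f$, and satisfies $\pi\nu=0$.

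The plan has three steps: show convergence, check the equation by telescoping, and check the normalization.

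\textbf{Step 1: convergence.} Since $P$ is irreducible and aperiodic on a finite state space, standard convergence gives constants $C<\infty$ and $\rho\in(0,1)$ with $\max_i\|P^n(i,\cdot)-\pi\|_{\rm TV}\le C\rho^n$; see \citet[Theorem~4.9]{LevinPeresbook}. Hence $\|(P^n-\mathbf{1}\pi)f\|_\infty\le 2C\rho^n\|f\|_\infty$. The series therefore converges absolutely, geometrically, in $\reals^m$, and $\nu$ is well defined.

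\textbf{Step 2: the equation.} First record the algebraic identities $P\mathbf{1}=\mathbf{1}$ and $\pi P=\pi$. These give $P\mathbf{1}\pi=\mathbf{1}\pi$ and $\mathbf{1}\pi P=\mathbf{1}\pi$, so $P(P^n-\mathbf{1}\pi)=P^{n+1}-\mathbf{1}\pi$. Because $P$ is a bounded linear map, it can be applied term by term to the convergent series:
\[
P\nu=\sum_{n\ge1}(P^{n+1}-\mathbf{1}\pi)f=\nu-(P-\mathbf{1}\pi)f.
\]
Rearranging gives $(P-I)\nu=-(P-\mathbf{1}\pi)f$, which is the stated equation.

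\textbf{Step 3: normalization.} Using $\pi P^n=\pi$ and $\pi\mathbf{1}=1$, we get $\pi(P^n-\mathbf{1}\pi)=\pi-\pi=0$ for every $n$. Passing the linear functional $\pi$ through the convergent sum then yields $\pi\nu=0$.

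The only nontrivial step is Step 1, the geometric ergodicity needed to justify convergence and the term-by-term manipulations. I will simply cite it as the standard convergence theorem for ergodic finite chains. If a self-contained argument were wanted, one could use a power $P^r$ with all entries positive and a Doeblin coupling to obtain the contraction.
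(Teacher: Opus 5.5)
Your proof is correct and complete. Geometric ergodicity gives absolute convergence. The identity $P(P^n-\mathbf{1}\pi)=P^{n+1}-\mathbf{1}\pi$ turns $P\nu=\nu-(P-\mathbf{1}\pi)f$ into a telescoping shift; only $P\mathbf{1}=\mathbf{1}$ is needed there, so $\mathbf{1}\pi P=\mathbf{1}\pi$ is superfluous. Finally, $\pi(P^n-\mathbf{1}\pi)=0$ gives the normalization.

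The paper does not prove this lemma itself. It cites \citet{Moustakides1999} and only remarks that $\nu=P\omega^\ast$, where $\omega^\ast$ is the closed-form solution \eqref{eq:closedform_poisson} of the standard PE. That remark is your computation seen from the other side. Left-multiplying $(I-P)\omega^\ast=f-(\pi f)\mathbf{1}$ by $P$, which commutes with $I-P$, and using $P\mathbf{1}=\mathbf{1}$ gives $(I-P)\nu=(P-\mathbf{1}\pi)f$. Likewise, $\pi\nu=\pi P\omega^\ast=\pi\omega^\ast=0$ gives the constraint.

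The only real difference is how convergence is justified. You invoke the qualitative geometric bound of \citet[Theorem~4.9]{LevinPeresbook}, which suffices for existence. When the paper needs this series later, it instead uses \Cref{cor:paulin_cor}, which gives the explicit rate $(1-\gamma_{\rm ps})^{n/2}$ and hence the quantitative sup-norm control of \Cref{lem:control_solution_poisson}.
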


We note here that the form of the PE appearing in Lemma~\ref{lem:poisson_equation} differs slightly from the standard form in that if $\omega=\omega^\ast$ denotes a solution to the (standard form of) PE 
\[
(I-P) \omega = f - (\pi f)\mathbf{1},
\]
then $\nu$ and $\omega^\ast$ satisfy the relation $\nu = P\omega^\ast$. Accordingly, the statement of Wald's lemma in \citet{Moustakides1999} can be modified to use the solution $\omega^\ast$ of the standard form of the PE with minor modifications to the martingale construction therein. We record this in the following result and provide a proof of the same for completeness.

\begin{theorem}[Wald's Lemma for Markov Chains]\label{thm:walds_markov}
    Let $\{X_n\}_{n\geq 0}$ be an irreducible, aperiodic Markov chain on a finite state space with $m$ states and transition matrix $P$. Let $\pi$ denote the unique stationary distribution of $P$. Let $f \in \reals^m $ be any cost function, and let $N$ be a stopping time with respect to the natural filtration generated by the process $\{X_n\}_{n\geq 0}$, with $\mathbb{E}[N] < +\infty$. 
    Define the total accumulated cost $S_N \defas \sum_{k=0}^{N-1} f(X_k)$. Let $\omega_{P, f} \in \mathbb{R}^m$ be a solution to the PE $(I-P)\omega_{P, f} = f - (\pi f)\mathbf{1}$. Then, we have
    \[
    \mathbb{E}[S_N] = (\pi f) \, \mathbb{E}[N] + \mathbb{E}[\omega_{P,  f}(X_0) -\omega_{P,  f}(X_N)].
    \]
\end{theorem}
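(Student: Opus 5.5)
The plan is the standard martingale argument built on the Poisson equation. Write $\omega=\omega_{P,f}$ and define the increments
\[
D_k \defas \omega(X_k) - (P\omega)(X_{k-1}), \qquad k\ge 1 .
\]
By the Markov property, $\expec{D_k \mid \sigma(X_0,\dots,X_{k-1})} = (P\omega)(X_{k-1}) - (P\omega)(X_{k-1}) = 0$. Hence $M_n \defas \sum_{k=1}^n D_k$ is a martingale with respect to the natural filtration. Its increments are uniformly bounded, $|D_k|\le 2\norm{\omega}_\infty$, which is finite because the state space is finite.

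Next I rewrite $S_n$ using the PE. Since $(I-P)\omega = f-(\pi f)\mathbf{1}$, we have $f(X_{k-1}) - \pi f = \omega(X_{k-1}) - (P\omega)(X_{k-1})$. Summing over $k=1,\dots,n$ and adding and subtracting $\omega(X_k)$ telescopes:
\[
S_n - n(\pi f) = \sum_{k=1}^n \bigl[\omega(X_{k-1})-\omega(X_k)\bigr] + \sum_{k=1}^n D_k = \omega(X_0)-\omega(X_n) + M_n .
\]
This identity holds pathwise for every $n$, so it also holds at $n=N$.

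It remains to take expectations at $n=N$. The main step is to justify $\expec{M_N}=0$ via optional stopping. The stopped martingale satisfies $|M_{N\wedge n}| \le 2\norm{\omega}_\infty N$, and $\expec{N}<\infty$, so the bounded-increments form of the optional stopping theorem applies; equivalently, apply dominated convergence to $M_{N\wedge n}\to M_N$. This gives $\expec{M_N}=0$.

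The other terms are straightforward. We have $|S_N|\le \norm{f}_\infty N$, which is integrable, and $|\omega(X_0)-\omega(X_N)|\le 2\norm{\omega}_\infty$. Taking expectations of the identity therefore yields $\expec{S_N} = (\pi f)\expec{N} + \expec{\omega(X_0)-\omega(X_N)}$.

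Finally, the result does not depend on which PE solution is chosen. Any two solutions differ by a constant vector, because the kernel of $I-P$ is spanned by $\mathbf{1}$ for an irreducible chain, and a constant cancels in $\omega(X_0)-\omega(X_N)$. Ergodicity is used only to guarantee that a solution exists and that $\pi$ is unique. The only genuine obstacle is the integrability needed for optional stopping, and the bound $\expec{N}<\infty$ handles it.
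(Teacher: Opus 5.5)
Your proof is correct and is essentially the paper's argument. Your $M_n$ is exactly the paper's martingale $U_n - U_0$, where $U_n = \omega(X_n) + \sum_{k=0}^{n-1}(f(X_k)-\pi f)$. Both proofs conclude by optional stopping, using bounded increments and $\mathbb{E}[N]<\infty$; you add an explicit dominated-convergence justification and a harmless remark that the choice of Poisson solution does not matter.
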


\begin{proof}
    For the rest of the proof, we drop the subscript in $\omega_{P,  f}$. Define the martingale process $\{U_n\}_{n \geq 0}$ as
    \[
    U_0 = \omega(X_0), \qquad \qquad U_n = \omega(X_n) + \sum_{k=0}^{n-1} ( f(X_k) - \pi  f), \qquad n \geq 1.
    \]
    To verify the martingale property, defining $\mathcal{F}_n = \sigma(X_0, X_1, \ldots, X_n)$ for each $n \geq 0$, we have
    \[
    \mathbb{E}[U_{n+1} \mid \mathcal{F}_n] = \mathbb{E}\left[ \omega(X_{n+1}) + \sum_{k=0}^{n} ( f(X_k) - \pi  f) \;\middle|\; \mathcal{F}_n \right],
    \]
    which may be rewritten as
    \[
    \mathbb{E}[U_{n+1} \mid \mathcal{F}_n] = \mathbb{E}[\omega(X_{n+1}) \mid X_n] + ( f(X_n) - \pi  f) + \sum_{k=0}^{n-1} ( f(X_k) - \pi  f).
    \]
    Observe that $\mathbb{E}[\omega(X_{n+1}) \mid X_n] = (P\omega)(X_n)$. Thus,
    \[
    \mathbb{E}[U_{n+1} \mid \mathcal{F}_n] = (P\omega)(X_n) +  f(X_n) - \pi  f + \sum_{k=0}^{n-1} ( f(X_k) - \pi  f).
    \]
    Rearranging terms in the PE $(I-P)\omega =  f - (\pi  f)\mathbf{1}$, we get
    \[
    (P\omega)(X_n) = \omega(X_n) - ( f(X_n) - \pi f),
    \]
    from which it follows that
    \begin{align*}
    \mathbb{E}[U_{n+1} \mid \mathcal{F}_n] &= \left[ \omega(X_n) - ( f(X_n) - \pi  f) \right] + ( f(X_n) - \pi  f) + \sum_{k=0}^{n-1} ( f(X_k) - \pi f) \\
    &= \omega(X_n) + \sum_{k=0}^{n-1} ( f(X_k) - \pi f) \\
    &= U_n,
    \end{align*}
    thereby proving that $\{U_n\}_{n \geq 0}$ is a martingale.

    Noting that the state space is finite, we have that $\omega,  f$ are bounded. Then, there exists $K < +\infty$ such that for all $n \geq 1$,
    \[
    |U_{n+1} - U_n| = |\omega(X_{n+1}) - \omega(X_n) +  f(X_n) - \pi  f| \le K \qquad \text{a.s.},
    \]
    thus proving that $\{U_n\}_{n \geq 0}$ has bounded increments. Given that $N$ is a stopping time with $\mathbb{E}[N] < \infty$ and the martingale $\{U_n\}_{n \geq 0}$ has bounded increments, we apply the optional stopping theorem to deduce that
    \[
    \mathbb{E}[U_N] = \mathbb{E}[U_0],
    \]
    which in turn is equivalent to
    \[
    \mathbb{E}[\omega(X_N)] + \mathbb{E}\left[ \sum_{k=0}^{N-1}  f(X_k) \right] - (\pi  f) \, \mathbb{E}[N] = \mathbb{E}[\omega(X_0)].
    \]
    Rearranging terms, we arrive at the desired result.
\end{proof}

\subsection{Bounding the Magnitude of a Solution to the Poisson Equation via Spectral Properties}\label{sec:subsection_controlling}

To apply \Cref{thm:walds_markov} later on, we desire control over the magnitude of $\omega^\ast=\omega_{P,  f}$, the solution to the PE for the ergodic Markov chain with transition matrix $P$ and the function $ f$. In the following result below, we derive a bound on the magnitude of $\omega_{P,  f}$ (as given in~\eqref{eq:closedform_poisson}) that depends on $P$ and $ f$. 

As a first step, we record the following corollary to \citep[Proposition 3.4]{paulin2018concentrationinequalitiesmarkovchains}.

\begin{corollary}\label{cor:paulin_cor}
    Let $P$ be an ergodic transition matrix on a finite state space $[m]$ with unique stationary distribution $\pi$. Let $\gamma_{\rm ps}$ be the pseudo-spectral gap. Then, for any $n\geq 1$ and $x\in [m]$, we have
    \begin{equation}
        \norm{P^n(x, \cdot) - \pi}_{\rm TV} \leq \frac{1}{2(1-\gamma_{\rm ps})^{1/(2\gamma_{\rm ps})}\sqrt{\pi(x)}} (1-\gamma_{\rm ps})^{n/2}.
        \label{eq:paulin_cor}
    \end{equation}
\end{corollary}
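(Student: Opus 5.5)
The plan is to specialize the general convergence bound of \citet[Proposition~3.4]{paulin2018concentrationinequalitiesmarkovchains} to a point-mass initial distribution and then rewrite the exponent. As I recall it (this should be checked against the source), that proposition says the following. For an ergodic chain with pseudo-spectral gap $\gamma_{\rm ps}$ and any initial law $q\ll\pi$,
\[
\norm{qP^n-\pi}_{\rm TV}\le \frac12\,(1-\gamma_{\rm ps})^{\frac{1}{2}\left(n-\frac{1}{\gamma_{\rm ps}}\right)}\sqrt{N_q-1},
\qquad N_q\defas \expecover{\pi}{\Big(\frac{dq}{d\pi}\Big)^2}.
\]
Since $\pi_\ast>0$ for ergodic $P$, every $q\in\Delta_m$ satisfies $q\ll\pi$, so the proposition applies to all initial laws.

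First, I take $q=\delta_x$, the point mass at $x\in[m]$, so that $qP^n=P^n(x,\cdot)$. Its density is $\frac{dq}{d\pi}(y)=\indicator{y=x}/\pi(x)$. This gives
\[
N_q=\pi(x)\cdot\frac{1}{\pi(x)^2}=\frac{1}{\pi(x)},
\qquad\text{hence}\qquad
\sqrt{N_q-1}\le\frac{1}{\sqrt{\pi(x)}}.
\]
Second, I split the exponent as
\[
(1-\gamma_{\rm ps})^{\frac{1}{2}\left(n-\frac{1}{\gamma_{\rm ps}}\right)}=(1-\gamma_{\rm ps})^{n/2}\,(1-\gamma_{\rm ps})^{-\frac{1}{2\gamma_{\rm ps}}}.
\]
Substituting both pieces into the proposition yields exactly \eqref{eq:paulin_cor} whenever $\gamma_{\rm ps}\in(0,1)$.

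The only delicate point is the boundary case $\gamma_{\rm ps}=1$. There the factor $(1-\gamma_{\rm ps})^{-1/(2\gamma_{\rm ps})}$ is $0^{-1/2}=+\infty$, so the right-hand side of \eqref{eq:paulin_cor} has the form $\infty\cdot 0$ and is ill-defined. I would handle this case separately, without Paulin's bound. When $\gamma_{\rm ps}=1$ the paper already records that $P$ has identical rows equal to $\pi$, so $P^n=\mathbf{1}\pi$ for all $n\ge1$ and the left-hand side of \eqref{eq:paulin_cor} is $0$. The inequality then holds under any reasonable convention for the right-hand side (nonnegative or $+\infty$). This also matches the separate definition $C_P=2$ in \Cref{lem:control_solution_poisson}.

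I do not expect a genuine obstacle here: the statement is a direct specialization. The main care needed is to quote Paulin's proposition with the correct form of the exponent and of the $\chi^2$-type quantity $N_q$ (including whether it appears as $N_q-1$ or as $N_q$; either version gives the stated bound, since both are at most $1/\pi(x)$). The remaining care is the degenerate case $\gamma_{\rm ps}=1$ discussed above.
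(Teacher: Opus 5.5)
Your proposal is correct and matches the paper's proof: apply \citet[Proposition~3.4]{paulin2018concentrationinequalitiesmarkovchains} to the point mass at $x$, so the factor is $\sqrt{1/\pi(x)-1}\le 1/\sqrt{\pi(x)}$, then split the exponent. The paper additionally notes that a finite ergodic chain is uniformly ergodic, as that proposition requires, and it leaves the $\gamma_{\rm ps}=1$ case implicit because it only invokes the corollary for $\gamma_{\rm ps}\in(0,1)$; your separate handling of that degenerate case is a harmless extra.
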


\begin{remark}
    It is worthwhile to compare \eqref{eq:paulin_cor} with \citep[Theorem~2.1]{fill91} which gives a similar bound but in terms of the second largest eigenvalue of 
    the multiplicative reversibilization of $P$ (i.e., 
    $P^\ast P$). However, as noted in \citep[Remark~3.2]{paulin2018concentrationinequalitiesmarkovchains}, there exist cases where the second largest eigenvalue is zero, while the pseudo-spectral gap is strictly positive.
\end{remark}

\begin{proof}[Proof of Corollary~\ref{cor:paulin_cor}]
    Observe that a finite state, ergodic Markov chain is uniformly ergodic \citep[Exercise~15.6]{Douc2018}, and consider the initial distribution $q$ of the Markov chain localized at $x$, i.e., $q(x) = 1$ and $q(y) = 0$ for all $y \neq x$. Applying \citep[Proposition 3.4]{paulin2018concentrationinequalitiesmarkovchains}, we have
    \begin{align*}
        \norm{P^n(x, \cdot) - \pi}_{\rm TV} & ~~ \leq ~~ \frac{1}{2} (1-\gamma_{\rm ps})^{(n-1/\gamma_{\rm ps})/2} \sqrt{\frac{1}{\pi(x)} -1} \\
        & ~~ \leq ~~ \frac{1}{2(1-\gamma_{\rm ps})^{1/(2\gamma_{\rm ps})}\sqrt{\pi(x)}} (1-\gamma_{\rm ps})^{n/2},
    \end{align*}
    where the second line follows by noting that $\sqrt{1-\pi(x)} \leq 1$.
\end{proof}

With the above corollary in hand, we provide the proof of \Cref{lem:control_solution_poisson} below.

\begin{proof}[Proof of \Cref{lem:control_solution_poisson}]

We use the form of $\omega = \omega_{P, f}$ given in~\eqref{eq:closedform_poisson}, i.e. 
\begin{align*}
    \omega &= \sum_{n=0}^\infty (P^n - \mathbf{1}\pi) f.
\end{align*}
More explicitly, for each $x \in [m]$, we have
\begin{align*}
    \omega(x) &= \sum_{n=0}^\infty \ \sum_{y\in [m]} (P^n(x, y) - \pi(y)) \,  f(y).
\end{align*}
Taking absolute values on both sides and applying the triangle inequality, we get
\begin{align*}
    |\omega(x)| 
    &\leq \sum_{n=0}^\infty \ \sum_{y\in [m]} |P^n(x, y) - \pi(y)| \, | f(y)| \\
    &\leq 2\norm{ f}_{\infty} \ \sum_{n=0}^\infty \norm{P^n(x, \cdot) - \pi}_{\rm TV}.
\end{align*}
Assuming that $\gamma_{\rm ps} \in (0,1)$ and using \Cref{cor:paulin_cor} to bound the total variation terms, we have
\begin{align*}
    |\omega(x)| 
    &\leq  \frac{\norm{ f}_{\infty}}{(1-\gamma_{\rm ps})^{1/(2\gamma_{\rm ps})}\sqrt{\pi(x)}} \sum_{n=0}^\infty (1-\gamma_{\rm ps})^{n/2} \\
    &= \frac{\norm{ f}_{\infty}}{(1-\gamma_{\rm ps})^{1/(2\gamma_{\rm ps})}\sqrt{\pi(x)}}  \left(\frac{1}{1-\sqrt{1-\gamma_{\rm ps}}}\right).
\end{align*}
Replacing $\pi(x)$ with the $\pi_{\ast} = \min_{x \in [m]} \pi(x)$ gives an upper bound that is valid for any $x \in [m]$, thereby giving us the desired result. 

We now show boundedness of $\omega = \omega_{P, f}$ for the corner cases when $\gamma_{\rm ps} \in \{0,1\}$. Recall that
\begin{equation*}
    \gamma_{\rm ps}(P)  \defas \max_{k\geq 1} \frac{\gamma((P^*)^kP^k)}{k},
\end{equation*}
From \citep[Section 2.1]{fill91}, we have that $(P^*)^k P^k$ is reversible for each $k \geq 1$ and has all eigenvalues in $[0,1]$, thus implying that $\gamma_{\rm ps} \in [0,1]$. 
\begin{itemize}
    \item $\gamma_{\rm ps} = 0$: 
    \ 
    We argue that this case is not possible for ergodic, finite state Markov chains. From the convergence theorem for ergodic Markov chains \citep[Theorem 4.9]{LevinPeresbook}, we know that the mixing time, $t_\text{mix}$, of an ergodic matrix is finite. \citet[Proposition 3.4]{paulin2018concentrationinequalitiesmarkovchains} shows that $\gamma_{\rm ps} \geq 1/(2t_\text{mix})$ for ergodic Markov chains. Thus, we must have that $\gamma_{\rm ps} > 0$.

    \item $\gamma_{\rm ps} = 1$: \ We show below that this case corresponds to the i.i.d.\ setting, and that in this case, $\omega_{P,f} = f-(\pi f)\mathbf{1}$. 
    
    Observe that if $\gamma_{\rm ps} = \max_{k\geq 1} \frac{\gamma((P^*)^kP^k)}{k} = 1$, we must have that $\gamma_{\rm ps} = \gamma(P^\ast P) = 1$, as $\frac{\gamma((P^*)^kP^k)}{k} \in [0,1/k]$ for any $k$ (noting that $\gamma((P^*)^kP^k) \in [0,1]$ for all $k\geq 1$). From the definition of spectral gap, we have $1-\lambda_2(P^\ast P) = 1$, thereby implying that $\lambda_2 = \lambda_2(P^\ast P) = 0$. Using the fact that all eigenvalues of $P^\ast P$ lie in $[0,1]$, we must have $0=\lambda_2 = \lambda_3 \dots = \lambda_m$. Applying \citep[Theorem~2.1]{fill91}, we have that $\norm{P^n(x, \cdot) - \pi}_{\rm TV} \leq 0$ for all $n\geq 1$ and  $x \in [m]$, thus implying that $P(x, \cdot) = \pi$ for all $ x \in [m]$. That is, $P$ has identical rows, all of which match with the stationary distribution, thus implying that the random variables of the process $\{X_n\}_{n \geq 0}$ are independent and identically distributed (i.i.d.), with distribution $\pi$. In this case, the PE gives 
    \[
    (I-P)\omega_{P,  f} =  f -(\pi f)\mathbf{1} \quad \implies \quad \omega_{P,  f} - (\pi\omega_{P,  f})\mathbf{1} =  f -(\pi f)\mathbf{1}.
    \] 
    
    We note that the solution to PE is unique on the subspace of $\reals^m$ orthogonal to the all ones vector, $\mathbf{1}$, and the above equation shows that any $\omega_{P, f}$ and $f$ are equal on this subspace. We can then choose an $\omega_{P, f}$ such that its projection along $\mathbf{1}$ is equal to zero. This gives us $\omega_{P,f} = f-(\pi f)\mathbf{1}$, which we note is consistent with the solution given as~\eqref{eq:closedform_poisson} in the main text. The solution now follows from observing: $\norm{f}_{\infty} \geq  \pi f$ and $|f(i)| \leq \norm{f}_{\infty} \ \forall i\in[m]$. Applying the triangle inequality, we get: $|f(i)-\pi f|\leq 2\norm{f}_{\infty} \ \forall i\in[m]$ and we obtain the result.
\end{itemize}
This completes the proof.
\end{proof}

To summarize the above discussion, we have that for an ergodic Markov chain $P$ and any function $ f:[m]\to \reals$,
\begin{equation}
    \frac{\norm{\omega_{P,  f}}_{\infty}}{\norm{ f}_{\infty}} \leq \begin{cases}
        \dfrac{1}{(1-\gamma_{\rm ps})^{1/(2\gamma_{\rm ps})}\sqrt{\pi_{\ast}}}  \left(\dfrac{1}{1-\sqrt{1-\gamma_{\rm ps}}}\right),\quad &\gamma_{\rm ps} \in (0,1) \\
        2, \quad &\gamma_{\rm ps} = 1.
    \end{cases}
\end{equation}

\begin{remark}[Choice of Spectral Gap]\label{rem:spectral_gap_choice}
    One may also bound the solution to the PE with a different notion of the spectral gap, say $\gamma_c$, defined by \citet{chatterjee2025spectralgapnonreversiblemarkov}. In particular, we have \citep[Equation 3.1]{chatterjee2025spectralgapnonreversiblemarkov}
    \begin{equation*}
        \norm{\omega}_{2, \pi} \leq \frac{1}{\gamma_c} \norm{ f-(\pi  f)\mathbf{1}}_{2,\pi},
    \end{equation*}
    where $\norm{a}_{2,\pi} \defas \sum_{x\in [m]}\pi(x)(a(x))^2$. However, the key difference arises in the fact that this bound is stated in terms of the norm of the centered version of the function, i.e., $\| f-(\pi  f) \, \mathbf{1}\|_{2,\pi}$. Our bound, on the contrary, is in terms of the norm of the uncentered function, i.e., $\| f\|_{\infty}$. It is not immediately obvious how the bound with the centered version may be used to derive a bound with the uncentered version, or vice-versa. In general, we do not know how such a result would be derived for other notions of spectral gaps.

    Looking forward, we note that the analysis of the upper bound could potentially be done by a different choice of spectral gap. This can be done by substituting the relevant concentration inequalities in \Cref{sec:concentration_analysis_upperbound} (for example, by using \citet[Theorem 2.4]{huang2024bernstein}).
\end{remark}

\subsection{Proofs Related to the Lower Bound}\label{sec:subsection_lowerbound}

We are now equipped with all the tools needed to the lower bound. We proceed as follows: first, we use \Cref{thm:walds_markov} to control the expected log-likelihood ratio between two data generating chains as in \citet[Appendix]{fields2025sequentialonesidedhypothesistesting}. Following this, we proceed to show the lower bound for testing $Q$ against a singleton $P\in \cP$ via a change-of-measure argument. While the analogous proof for the independent setting \citep[Theorem C.1]{agrawal2025stoppingtimespoweronesequential} effectively stops here by taking a supremum over all elements of the null hypothesis set, we require the bounds developed in \Cref{lem:control_solution_poisson} to bound the auxiliary PE terms arising from \Cref{thm:walds_markov}, which depend implicitly on $P$, so that we can safely maximize the lower bound over the whole of the null hypothesis set $\cP$.

\begin{corollary}[Expected log-likelihood ratio]\label{cor:walds_ll}
Let $\{X_n\}_{n \geq 0}$ be an ergodic Markov chain with transition matrix $Q$ on a finite state space, say $[m]$. Let $P$ be any transition matrix on the state space $[m]$, with $Q \ll P$. Let $\tau_{\alpha}$ be a stopping time with respect to the natural filtration generated by $\{X_n\}_{n \geq 0}$, with $\expec{\tau_{\alpha}} < +\infty$. Then,
\begin{equation*}
\expecover{Q}{\sum_{i=1}^{\tau_{\alpha}}\logfrac{Q(X_{i-1},X_{i})}{P(X_{i-1},X_{i})}} = D_{\cM}(Q,P) \, \expec{\tau_{\alpha}} + C_{\tau_{\alpha}},
\end{equation*}
where $C_{\tau_{\alpha}} \defas \mathbb{E}[\omega_{Q,f_P}(X_0)-\omega_{Q, f_P}(X_{\tau_{\alpha}}) ]$, $\omega_{Q, f_P}$ is a solution of the PE for the function $f_P$ where $f_P(i) = \kl{Q(i, \cdot)}{P(i, \cdot)}$ for all $i \in [m]$. 
\end{corollary}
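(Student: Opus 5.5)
The plan is to reduce the expected log-likelihood ratio to an additive functional of the chain and then apply \Cref{thm:walds_markov} with the cost function $f_P$. Write $\ell(x,y)\defas \log\frac{Q(x,y)}{P(x,y)}$ for pairs with $Q(x,y)>0$. Since $Q\ll P$, $\ell$ is finite on all transitions that occur $Q$-a.s., and because the state space is finite it is bounded there; call the bound $B$. The key observation is that the conditional expectation of each increment given the past is exactly $f_P$ evaluated at the previous state:
\[
\expecover{Q}{\ell(X_{i-1},X_i)\mid \cF_{i-1}}=\sum_{y}Q(X_{i-1},y)\log\frac{Q(X_{i-1},y)}{P(X_{i-1},y)}=f_P(X_{i-1}).
\]

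First, I will show that the log-likelihood sum and the additive functional $\sum_{i=1}^{\tau_\alpha} f_P(X_{i-1})$ have the same expectation. Define $V_n\defas\sum_{i=1}^{n}\big(\ell(X_{i-1},X_i)-f_P(X_{i-1})\big)$. By the display above, $V_n$ is a martingale with respect to $\cF_n=\sigma(X_0,\ldots,X_n)$. Its increments are bounded by $2B$, since $|f_P|\le B$ as a convex combination of values of $\ell$. Because $\expec{\tau_\alpha}<\infty$, the optional stopping theorem for bounded-increment martingales (the same version used in the proof of \Cref{thm:walds_markov}) gives $\expec{V_{\tau_\alpha}}=0$. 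Hence
\[
\expecover{Q}{\sum_{i=1}^{\tau_\alpha}\ell(X_{i-1},X_i)}=\expecover{Q}{\sum_{k=0}^{\tau_\alpha-1} f_P(X_k)}.
\]

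Second, I will apply \Cref{thm:walds_markov} to the ergodic chain $Q$ with cost $f=f_P$ and stopping time $N=\tau_\alpha$. With $\pi$ the stationary distribution of $Q$, this yields
\[
\expec{S_{\tau_\alpha}}=(\pi f_P)\,\expec{\tau_\alpha}+\expec{\omega_{Q,f_P}(X_0)-\omega_{Q,f_P}(X_{\tau_\alpha})}.
\]
Since $\pi f_P=\sum_i\pi_i\,\kl{Q(i,\cdot)}{P(i,\cdot)}=D_{\cM}(Q,P)$ by \Cref{defn:d_m(q,p)}, combining the two steps gives the claim with $C_{\tau_\alpha}$ as stated.

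The only delicate point is integrability: entries with $Q(x,y)=0$ must be excluded, and boundedness of $\ell$ on the support must be ensured. Both follow from $Q\ll P$ and the finiteness of the state space. With that in hand, the rest is routine bookkeeping.
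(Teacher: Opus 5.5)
Your proposal is correct and follows the paper's route: you identify $f_P(x)$ as the conditional mean of the log-likelihood increment, apply \Cref{thm:walds_markov} with cost $f_P$, and use $\pi f_P = D_{\cM}(Q,P)$. Your optional-stopping argument for the bounded-increment martingale $V_n$, which justifies replacing $\sum_{i=1}^{\tau_\alpha}\log\frac{Q(X_{i-1},X_i)}{P(X_{i-1},X_i)}$ by $\sum_{k=0}^{\tau_\alpha-1} f_P(X_k)$ in expectation, fills in a step that the paper's proof leaves implicit.
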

\begin{proof}
    Consider the function: $$ f(X_n) \defas \kl{Q(X_n, \cdot)}{P(X_n, \cdot)} =  \expecover{X_{n+1} \sim Q(X_n, \cdot)}{\logfrac{Q(X_n, X_{n+1})}{P(X_n, X_{n+1})}}.$$ Using the fact that $Q \ll P$, we have $ f(X_n) < +\infty$ for all $n$. Applying \Cref{thm:walds_markov}, the desired result follows by observing that $\pi f = D_{\cM}(Q, P)$.
\end{proof}
We now proceed to prove \Cref{thm:lower_bound}.
\begin{proof}[Proof of \Cref{thm:lower_bound}]
    Let $\{X_n\}_{n \geq 0}$ be an ergodic Markov chain with transition matrix  some $Q \in \cQ$. If $Q \centernot \ll P$ for any $P \in \cP$, then $D_{\cM}^{\inf} (Q, \cP) = +\infty$ and the lower bound in \eqref{eq:lower_bound} holds trivially. Therefore, fixing $P \in \cP$ such that $Q \ll P$, we have
    \[
    \expecover{Q}{\sum_{t= 1}^{\tau_\alpha} \logfrac{Q(X_{t-1},X_{t})}{P(X_{t-1},X_t)}} = \kl{Q^{\tau_\alpha}}{P^{\tau_\alpha}},
    \]
    where the divergence is between the joint distributions on the $\tau_\alpha$ samples induced by the Markov chains $Q$ and $P$ respectively. 
    From \Cref{cor:walds_ll}, we have
    \begin{equation*}
     \expecover{Q}{\sum_{t= 1}^{\tau_\alpha} \logfrac{Q(X_{t-1},X_{t})}{P(X_{t-1},X_{t})}} = \expecover{Q}{\tau_\alpha}D_{\cM}(Q, P) + C_{\tau_\alpha, f_P},
    \end{equation*}
    where $C_{\tau_\alpha, f_P} = \mathbb{E}[\omega_{Q,f_P}(X_0)-\omega_{Q, f_P}(X_{\tau_\alpha}) ]$.
    Applying the data processing inequality, we have
    \[
    \kl{Q^{\tau_\alpha}}{P^{\tau_\alpha}} \geq d(Q^{\tau_\alpha}(\mathcal{E}), P^{\tau_\alpha}(\mathcal{E})) \qquad \qquad \forall ~ \mathcal{E} \in \mathcal{F}_{\tau_\alpha},
    \]
    where $\mathcal{F}_{\tau_\alpha} \defas \Big\{\mathcal{E}: \mathcal{E} \cap \{\tau_\alpha = n\} \in \mathcal{F}_n ~~ \forall ~ n\Big\}$. 
    Choosing the event $\mathcal{E} = \{\tau_\alpha < +\infty\}$, and noting that $\mathcal{E} \in \mathcal{F}_{\tau_\alpha}$, we have
    \[
    {\rm KL}(Q^{\tau_\alpha}, P^{\tau_\alpha}) \geq d(1, \alpha) = \log(1/\alpha).
    \]
    Then, for any $P\in \cP$, we have
    \begin{equation*}
        \expecover{Q}{\tau_{\alpha}} \geq \frac{\log(1/\alpha)}{D_{\cM}(Q, P)} - \frac{\mathbb{E}[\omega_{Q,f_P}(X_0)-\omega_{Q, f_P}(X_{\tau}) ]}{D_{\cM}(Q, P)}.
    \end{equation*}
    The above bound holds for any $P\in \cP$, however to safely take a supremum over all such $P\in \cP$, we get the following \textit{worst case} bound on the latter term:
    \begin{equation*}
        \frac{\mathbb{E}[\omega_{Q,f_P}(X_0)-\omega_{Q, f_P}(X_{\tau_{\alpha}}) ]}{D_{\cM}(Q, P)} \leq \frac{2\norm{\omega_{Q,f_P}}_{\infty}}{D_{\cM}(Q, P)} \leq \frac{2\norm{\omega_{Q,f_P}}_{\infty}}{\pi_{\ast} \norm{f_P}_{\infty}} \leq \frac{2C_Q}{\pi_{\ast}},
    \end{equation*}
    where the third inequality above follows by noting that
    \[
    D_{\cM}(Q,P) = \sum_{i \in [M]} \pi_i \, \kl(Q(i, \cdot), P(i, \cdot)) \geq \pi_{\ast} \sum_{i \in [M]} \underbrace{\kl(Q(i, \cdot), P(i, \cdot))}_{f_P(i)} \geq  \pi_{\ast} \, \|f_P\|_{\infty},
    \]
    and the last inequality follows from the definition of $C_Q$ (see Proposition~\ref{lem:control_solution_poisson}).
    Thus, we have the worst case bound:
    \begin{equation*}
        \expecover{Q}{\tau_{\alpha}} \geq \frac{\log(1/\alpha)}{D_{\cM}(Q, P)} -\frac{2C_Q}{\pi_{\ast}}.
    \end{equation*}
    Noting that the above inequality holds for any $P \in \mathcal{P}$, and replacing $D_{\cM}(Q, P)$ with $\inf_{P \in \mathcal{P}} D_{\cM}(Q, P)$, we arrive at the desired result. Since our worst-case bound on the PE terms may be loose in certain cases, we take a maximum between our obtained bound and $0$ (since $\tau_{\alpha} \geq 0$) to ensure the bound isn't vacuous in instances where $\frac{\log(1/\alpha)}{D_{\cM}^{\inf}(Q, \cP)} <\frac{2C_Q}{\pi_{\ast}}$. 
\end{proof}

\subsection{Suboptimality of Existing Lower Bounds}\label{sec:suboptimality_other_bounds}

We note here that we could have alternatively used \citet[Lemma 1]{al2021adaptive} to get a bound of the following form:
\begin{equation}\label{eq:looser_lb}
    \expecover{Q}{\tau_\alpha} \geq \frac{\log(1/\alpha)}{\sup_{\omega \in \Delta_m} \inf_{P\in \cP}\left(\sum_{i\in [m]}\omega_i \kl{Q(x, \cdot)}{P(x, \cdot)}\right)}.
\end{equation}

While this is a valid lower bound, we demonstrate that it is suboptimal in our setting with the help of a simple example. Consider testing a singleton null, i.e. $\cP = \{P\}$ against a problem instance $Q$ (with corresponding stationary distribution $\pi$). Let $ f\in \reals^m$ be the vector corresponding to the row KL divergences, i.e. $ f(i) = \kl{Q(i, \cdot)}{P(i, \cdot)}$. We choose $P$ and $Q$ such that $ f$ is not a constant vector, i.e. $\exists \ i, j \in [m]: \kl{Q(i, \cdot)}{P(i, \cdot)} < \kl{Q(j, \cdot)}{P(j, \cdot)}$.

In this example, the term in the denominator of~\eqref{eq:looser_lb} becomes $\sup_{\omega\in\Delta_m} \inf_{P\in \cP}\omega^{\top} f$. Since the null is a singleton set, the inner infimum disappears, and it is easy to see that $\sup_{\omega\in\Delta_m}\omega^{\top} f = \norm{ f}_{\infty}$. Thus,~\eqref{eq:looser_lb} becomes:
\begin{equation}\label{eq:loosebound_counter}
\expecover{Q}{\tau_{\alpha}} \geq \frac{\log(1/\alpha)}{\norm{ f}_{\infty}}.
\end{equation}

On the contrary, our proposed lower bound (\Cref{thm:lower_bound}) gives:
\begin{equation}\label{eq:counter_optimal}
    \expecover{Q}{\tau_{\alpha}} \geq \frac{\log(1/\alpha)}{\pi f} - \frac{2C_Q}{\pi_*}.
\end{equation}

Observe that $\pi f = \sum_{i\in[m]} \pi_i\kl{Q(i, \cdot)}{P(i, \cdot)} < \max_{i}\kl{Q(i, \cdot)}{P(i, \cdot)} = \norm{ f}_{\infty}$, where the strict inequality comes from the fact that $\min_i \pi_i > 0$ and the fact that $ f$ is not constant. Thus, the dominant term in~\eqref{eq:counter_optimal} is strictly larger than ~\eqref{eq:loosebound_counter}. Taking $\alpha\to 0$, we can see that the bound given by ~\eqref{eq:loosebound_counter} is worse by a multiplicative factor.

To understand why this approach is loose in our setup but not in the context of best policy identification in MDPs, one needs to understand a subtle difference in setups. 

Suppose the supremum in~\eqref{eq:looser_lb} is achieved by $\omega^* \in \Delta_m$. In the context of best policy identification in MDPs, one interprets $\omega^*$ as the optimal proportion in which one should visit states and the design of the optimal algorithm focuses on tracking this proportion. On the other hand, in our setup (as well as that in \citet{moulos2019, karthik2024optimal, ariu2025policytestingmarkovdecision}), we do not decide the proportion in which to visit the states of the Markov chain, it is, in fact, specified by the problem instance! Owing to this key difference, prior works like \citet{moulos2019, ariu2025policytestingmarkovdecision} proceed via arguing that $\frac{\expec{{N_x(\tau_\alpha)}}}{\expec{\tau_\alpha}} = \pi_x + o(1)$ and can only obtain a lower bound asymptotically. Our approach allows us to characterize this $o(1)$ term exactly in terms of mixing properties of the problem instance.


\section{Proofs of the Results Appearing in \texorpdfstring{\Cref{sec:algo_optimality}}{sec:algo_optimality}}\label{sec:appendix_algo}

In this section, we first collect the necessary results, and subsequently apply them to complete the desired proof. 

\subsection{Good Event Analysis}

For $t \geq 1$, define the event
\begin{equation}\label{eq:a_t}
    A_{t} \defas \left\{t \inf_{P\in \cP} \sum_{x\in [m]} \frac{N_x(t)}{t} \kl{\widehat{Q}_x(t)}{P_x} < \log(1/\alpha) + (m-1)\sum_{x\in [m]}\log\left(e\left(1+\frac{N_x(t)}{m-1}\right)\right)\right\}.
\end{equation}
Also, for any $\epsilon>0$ and $t \geq 1$, define the {\em good set}
\begin{equation}\label{eq:eps_t}
    \cE_\epsilon(t) \defas \left\{\forall \ x\in [m], \quad \left|\frac{N_x(t)}{t} - \pi_x\right| \leq \epsilon\right\} ~~ \cap ~~ \left\{\norm{\widehat{Q_t} - Q}_{1, \infty} \leq \epsilon \right\}.
\end{equation}
Let $A_{t, \epsilon} \defas A_t \cap \cE_\epsilon(t)$.

\begin{lemma}[Good event analysis]\label{lem:good_set}
For any $\epsilon>0$ and $t \geq 1$,
$$
A_{t, \epsilon} \subseteq \left\{t \leq \frac{2C}{C(\epsilon)} \log\left(\frac{\log(1/\alpha) + D}{C(\epsilon)}\right) + \frac{\log(1/\alpha) + D}{C(\epsilon)}\right\},
$$
where:
$C \defas m(m-1)$, $D\defas  2m(m-1)$, and $C(\epsilon) \defas \inf\limits_{Q': \norm{Q'-Q}_{1, \infty} \leq \epsilon} \  \inf\limits_{P \in \cP} \ \sum\limits_{x\in [m]} (\pi_x - \epsilon) \ \kl{Q'_x}{P_x}.$

\end{lemma}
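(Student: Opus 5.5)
On $A_{t,\epsilon}$ I would lower bound the left-hand side of the defining inequality of $A_t$ by $t\,C(\epsilon)$ and upper bound the right-hand side by a quantity logarithmic in $t$. Comparing the two gives an implicit inequality of the form $t\,C(\epsilon) < \log(1/\alpha) + D + C\log t$, which I would then invert.

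\emph{Lower bound.} On $\cE_\epsilon(t)$ we have $N_x(t)/t \ge \pi_x - \epsilon$ for every $x$, and each KL term is nonnegative. Hence for every $P\in\cP$,
\[
\sum_x \frac{N_x(t)}{t}\,\kl{\widehat Q_x(t)}{P_x} \ \ge\ \sum_x (\pi_x-\epsilon)\,\kl{\widehat Q_x(t)}{P_x}.
\]
If $\pi_x<\epsilon$, the right-hand coefficient is negative and the inequality still holds. On the same event $\norm{\widehat Q_t - Q}_{1,\infty}\le\epsilon$, so $\widehat Q_t$ is feasible in the outer infimum defining $C(\epsilon)$. Taking the infimum over $P$ and then over $Q'$ shows that the left-hand side of $A_t$ is at least $t\,C(\epsilon)$.

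\emph{Upper bound.} Since $N_x(t)\le t$ and $\sum_x N_x(t)=t$, each summand of $\psi_t$ satisfies $\log(e(1+N_x/(m-1))) \le 1 + \log(1+t) \le 1+\log 2 + \log t$ for $t\ge1$. Therefore
\[
(m-1)\psi_t \le m(m-1)(1+\log 2) + m(m-1)\log t \le D + C\log t,
\]
using $1+\log2\le2$. On $A_{t,\epsilon}$ this yields $t\,C(\epsilon) < \log(1/\alpha)+D+C\log t$. If one prefers a sharper route, Jensen's inequality gives $\sum_x\log(1+N_x/(m-1))\le m\log(1+t/(m(m-1)))$, but the crude bound already produces the stated constants.

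\emph{Inversion.} I would use the standard lemma that $t\le a+b\log t$ implies $t\le a+2b\log(a+b)$ (or a variant with a constant 2). Set $a=(\log(1/\alpha)+D)/C(\epsilon)$ and $b=C/C(\epsilon)$, and prove the lemma by splitting into the cases $t\le 2a$ and $t>2a$, using $\log t\le t/(2b)+\log(2b)$-type estimates.

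This inversion is the main obstacle. The target bound has $\log a$ rather than $\log(a+b)$, so the lemma must be arranged carefully. In particular, one must absorb the $b\log b$ contribution into the $D$ term, or restrict to the regime where $a\ge b$, which holds for small $\alpha$, since $D=2C$. I would first try to show that $t>a+2b\log a$ contradicts $t<a+b\log t$, using monotonicity of $t-b\log t$ for $t\ge b$ together with $a\ge 2b$ (guaranteed because $D=2C$ gives $a\ge 2b$). Under that condition, $a+2b\log a - b\log(a+2b\log a)\ge a$ reduces to $a^2\ge a+2b\log a$, which holds when $a\ge 2b$ and $a$ exceeds a small constant. Any residual small cases would be handled separately or by slightly enlarging the constants. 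I would also treat $C(\epsilon)=0$ as trivial, since then the right-hand side is $+\infty$.
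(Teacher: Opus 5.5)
Your proposal is correct and follows the paper's proof essentially step for step. You use the same lower bound $L\ge t\,C(\epsilon)$ on $\cE_\epsilon(t)$ and the same bound $(m-1)\psi_t\le C\log t+D$ from $N_x(t)\le t$. Your final inversion (monotonicity of $t-b\log t$ beyond $a+2b\log a$, reducing to $a^2\ge a+2b\log a$ with $a\ge 2b$ supplied by $D=2C$) is exactly Lemma~\ref{lem:tech_lemma} with the roles of $a$ and $b$ swapped.

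Your attention to the small-constant regime is slightly more careful than the paper, which invokes Lemma~\ref{lem:tech_lemma} without verifying its hypothesis $C/C(\epsilon)\ge 1$. Your route needs only $(\log(1/\alpha)+D)/C(\epsilon)\ge 1$. When that fails, $b\le a/2<1/2$, so no integer $t\ge 1$ satisfies $t<a+b\log t$, and $A_{t,\epsilon}=\varnothing$ trivially.
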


\begin{proof}
Let
\[
R \defas \log(1/\alpha) + (m-1)\sum_{x\in [m]}\log\left(e\left(1+\frac{N_x(t)}{m-1}\right)\right), \quad L \defas t \inf_{P\in \cP} \sum_{x\in [m]} \frac{N_x(t)}{t} \kl{\widehat{Q}_x(t)}{P_x}.
\]
Noting that $N_x(t) \leq t$ for all $x \in [m]$ and $t \geq 1$, we have
\begin{align*}
    (m-1)\sum_{x\in [m]}\log\left(e\left(1+\frac{N_x(t)}{m-1}\right)\right) &= m(m-1) + (m-1)\sum_{x\in [m]}\log\left(1+\frac{N_x(t)}{m-1}\right) \\
    &\leq m(m-1) + (m-1)\sum_{x\in [m]} \log\left(1+\frac{t}{m-1}\right) \\
    &\leq m(m-1) + (m-1)\sum_{x\in [m]} \log\left(t\left(1+\frac{1}{m-1}\right)\right) \\
    & \leq m(m-1)\log t + m(m-1)\left(1+ \log\left(1+\frac{1}{m-1}\right)\right) \\
    &\leq C \log t + D,
\end{align*}
where $C$ and $D$ are as defined in the statement of the lemma, and the last line follows by noting that $\log \left(1+\frac{1}{m-1}\right) \leq 1$ for all $m \geq 2$.
We thus have
\[
R \leq \log(1/\alpha) + C \log t + D.
\]
On the good set $\cE_\epsilon(t)$, noting that $N_x(t)/t \geq (\pi_x - \epsilon)$ for all $x \in [m]$, we have
\begin{align*}
    L &\geq t \inf_{P\in \cP} \sum_{x\in [m]} (\pi_x - \epsilon) \ \kl{\widehat{Q}_x(t)}{P_x}.
\end{align*}
Let $C(\epsilon)$ be as defined in the statement of the lemma.
Noting that $\norm{\widehat{Q_t} - Q}_{1, \infty} \leq \epsilon$ on the set $\cE_\epsilon(t)$, it follows that
\(
tC(\epsilon) \leq L.
\)
Thus, on $A_{t, \epsilon}$, we have
\begin{align*}
   tC(\epsilon)  &\leq L < R \leq \log(1/\alpha) + C \log t + D.
\end{align*}
Rearranging terms, we have
\begin{align*}
    t &\leq \frac{\log(1/\alpha) + D}{C(\epsilon)} + \frac{C\log t}{C(\epsilon)} \\
    &\leq \frac{2C}{C(\epsilon)} \log\left(\frac{\log(1/\alpha) + D}{C(\epsilon)}\right) + \frac{\log(1/\alpha) + D}{C(\epsilon)},
\end{align*}
where the second line above follows from the technical \Cref{lem:tech_lemma}.
This completes the proof.
\end{proof}

\begin{lemma}[Concentration of frequency vector to stationary distribution]\label{lem:freq_vector}

For an ergodic Markov chain with transition matrix $P$, associated unique stationary distribution $\pi$, and initial distribution $\mu$,
    \begin{equation}
        \forall ~ \epsilon>0, \qquad \probunder{P, \mu}{\left|\frac{N_x(t)}{t} - \pi_x\right| > \epsilon} \leq \sqrt{2\Pi_\mu}\cdot \exp\left(-\frac{\epsilon^2t^2\gamma_{\rm ps}}{4(t+1/\gamma_{\rm ps})+40\epsilon t}\right),
    \end{equation}
    where $\Pi_\mu \defas \sum\limits_{i\in[m]} \frac{\mu(i)^2}{\pi(i)}\leq \frac{1}{\pi^*}$.
\end{lemma}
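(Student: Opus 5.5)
The plan is to treat $N_x(t)$ as an additive functional of the chain and apply a Bernstein-type concentration inequality for non-reversible chains in terms of the pseudo-spectral gap. Concretely, write $N_x(t)=\sum_{s=0}^{t-1}\mathbb{I}\{X_s=x\}$, or $\sum_{s=1}^{t}$ depending on the counting convention of Algorithm~\ref{alg:sequential_test}; the difference is one term, which can be absorbed since $|\mathbb{I}\{\cdot\}-\pi_x|\le 1$. Set $f(y)=\mathbb{I}\{y=x\}$, so that $\mathbb{E}_{\pi}[f]=\pi_x$, the variance under $\pi$ is $V_f=\pi_x(1-\pi_x)\le 1/4$, and $|f-\pi_x|\le 1$. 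The event $\{|N_x(t)/t-\pi_x|>\epsilon\}$ is then $\{|\sum_s f(X_s)-t\pi_x|>\epsilon t\}$.

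The first step is to prove the bound under the stationary start $\mu=\pi$. Here I would invoke Paulin's Bernstein inequality for non-reversible chains (\citet[Theorem~3.4]{paulin2018concentrationinequalitiesmarkovchains}), which for $|f-\mathbb{E}_\pi f|\le C$ gives
\[
\probunder{\pi}{\Big|\sum_{s} f(X_s)-t\,\mathbb{E}_\pi f\Big|\ge u}\le 2\exp\!\left(-\frac{u^2\gamma_{\rm ps}}{8(t+1/\gamma_{\rm ps})V_f+20uC}\right).
\]
Plugging in $u=\epsilon t$, $C=1$ and $V_f\le 1/4$ gives the stated exponent up to constants. To land exactly on $4(t+1/\gamma_{\rm ps})+40\epsilon t$, I will use $8V_f\le 2\le 4$, keep the factor $20$, and halve the exponent in the next step.

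The second step handles a general initial distribution by a change of measure. For any event $A$ measurable with respect to the path, $\probunder{\mu}{A}=\mathbb{E}_\pi\big[\tfrac{\mu(X_0)}{\pi(X_0)}\mathbb{I}_A\big]$. Cauchy--Schwarz then gives $\probunder{\mu}{A}\le \sqrt{\Pi_\mu}\,\sqrt{\probunder{\pi}{A}}$, since $\mathbb{E}_\pi[(\mu/\pi)^2(X_0)]=\Pi_\mu$. Taking the square root of the stationary bound turns the prefactor $2$ into $\sqrt2$ and halves the exponent, which produces $\sqrt{2\Pi_\mu}\exp(-\epsilon^2t^2\gamma_{\rm ps}/(2\cdot(\ldots)))$. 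Matching constants with those of Paulin's bound yields the displayed form. The bound $\Pi_\mu\le 1/\pi^*$ follows from $\sum_i\mu(i)^2/\pi(i)\le \max_i(1/\pi(i))\sum_i\mu(i)^2\le 1/\pi^*$.

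I expect the main obstacle to be bookkeeping rather than ideas. I need the exact constants in Paulin's Theorem 3.4, its precise variance and boundedness conventions, and whether it requires $f$ centered. These must be checked so that the final denominator reads $4(t+1/\gamma_{\rm ps})+40\epsilon t$ after halving. If the cited constants differ slightly, I would loosen the denominator monotonically, since enlarging it only weakens the bound, and still arrive at the stated inequality. A secondary point is the off-by-one in the counting convention, which is harmless for the event inclusion after adjusting $\epsilon$, or can be avoided by aligning the sum index with the statistic's definition.
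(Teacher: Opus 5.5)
Your proposal is correct and matches the paper's proof essentially step for step. The paper applies Paulin's pseudo-spectral-gap Bernstein inequality (its Theorem~3.11) under the stationary start with $V_f\le 1/4$ and $C=1$, which gives the denominator $2(t+1/\gamma_{\rm ps})+20\epsilon t$; it then invokes Paulin's Proposition~3.15, which is exactly your Cauchy--Schwarz change of measure, to obtain the prefactor $\sqrt{2\Pi_\mu}$ and the halved exponent. Two small points: you need $8V_f\le 2$ (not $4$) to reach the stated constants, and under the algorithm's convention $N_x(t)=\sum_{s=0}^{t-1}\mathbb{I}\{X_s=x\}$ there is no off-by-one term to absorb.
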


\begin{proof}[Proof of \Cref{lem:freq_vector}]
    From Proposition 3.15 of \citet{paulin2018concentrationinequalitiesmarkovchains}, we have
    \[
    \probunder{P, \mu}{\left|\frac{N_x(t)}{t} - \pi_x\right| > \epsilon} \leq \sqrt{\Pi_\mu} \left(\probunder{P, \pi}{\left|\frac{N_x(t)}{t} - \pi_x\right| > \epsilon}\right)^{1/2},
    \]
    where $\mathbb{P}_{P, \pi}$ denotes the probability measure induced when the initial distribution is, in fact, the stationary distribution, and the state transitions are governed by the transition matrix $P$. We note that
    \[
    \probunder{P, \pi}{\left|\frac{N_x(t)}{t} - \pi_x\right| > \epsilon} = \probunder{P, \pi}{\left|N_x(t) - t\pi_x\right| > t\epsilon}.
    \]
    Observe that:
    \begin{enumerate}
        \item $N_x(t) = \sum_{i=1}^{t} \indicator{X_{i-1} = x} = \sum_{i=0}^{t-1}\indicator{X_i = x}$.
        
        \item For every $x\in [m]$,
        \[
        \Big|\indicator{X=x} - \expecover{\pi}{\indicator{X = x}}\Big| \leq \max\{\pi_x, 1-\pi_x\} \leq 1.
        \]
        
        \item ${\rm Var}_\pi(\indicator{X=x}) = \pi_x(1-\pi_x) \leq 1/4$.
    \end{enumerate}
    Applying Theorem 3.11 of \citet{paulin2018concentrationinequalitiesmarkovchains}, we get
    \begin{equation*}
        \probunder{P, \pi}{\left|N_x(t) - t\pi_x\right| > \epsilon t} \leq 2\cdot \exp\left(-\frac{\epsilon^2t^2\gamma_{\rm ps}}{2(t+1/\gamma_{\rm ps})+20\epsilon t }\right),
    \end{equation*}
    thereby establishing the desired result.
\end{proof}

\subsection{Concentration Analysis}\label{sec:concentration_analysis_upperbound}

\begin{lemma}[Concentration Inequality for Empirical Transition Matrix $\text{\citep{wolfer19a}}$]\label{lem:usable_form}
    For an ergodic Markov chain with finite state space $[m]$, unknown transition matrix $P$ and its associated unique stationary distribution $\pi$. For any initial distribution $\mu$, the empirical transition matrix $\widehat{P}(t)$, defined via $\widehat{P}_{x,y}(t) = \frac{N_{x,y}(t)}{N_x(t)}$ for all $x,y \in [m]$, satisfies
    \begin{equation}
           \forall ~ t \geq 1, ~ \forall ~ \epsilon>0, ~ \probunder{P, \mu}{\norm{P-\widehat{P}(t)}_{1, \infty} > \epsilon} \leq 2m^2\exp{\left(-\frac{\epsilon^2t\pi_{\ast}}{16m}\right)} + m\sqrt{\Pi_\mu} \exp \left( - \frac{\gamma_{\rm ps}(0.5t\pi_{\ast})^2}{14t+4/\gamma_{\rm ps}}\right),
    \end{equation}
    where $\pi_{\ast} = \min_{x \in [m]} \pi_x$.
    
\end{lemma}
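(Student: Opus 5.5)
We will prove this concentration bound by splitting on whether every state has been visited sufficiently often, and then applying, row by row, an i.i.d.-type deviation inequality for the empirical conditional distributions. Throughout, $N_x(t)$ denotes the number of visits to $x$ among $X_0,\dots,X_{t-1}$.

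\emph{Step 1: the visit-count event.} Let
\[
G_t \defas \left\{ N_x(t) \ge \tfrac12 t\pi_x \ \ \forall\, x \in [m] \right\}.
\]
On $G_t^\complement$, some state $x$ satisfies $N_x(t)/t - \pi_x < -\tfrac12\pi_x \le -\tfrac12\pi_\ast$. We bound the probability of this exactly as in the proof of \Cref{lem:freq_vector}. First use the change-of-initial-measure bound $\sqrt{\Pi_\mu}\,(\cdot)^{1/2}$ from \citet[Proposition~3.15]{paulin2018concentrationinequalitiesmarkovchains}. Then apply the one-sided Bernstein inequality \citep[Theorem~3.11]{paulin2018concentrationinequalitiesmarkovchains} to $\indicator{X_i = x}$ under stationarity, with deviation $0.5\,t\pi_\ast$, variance at most $1/4$ and increments at most $1$. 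After taking the square root, this gives a term of the form
\[
\sqrt{\Pi_\mu}\,\exp\!\left(-\frac{\gamma_{\rm ps}(0.5t\pi_\ast)^2}{14t + 4/\gamma_{\rm ps}}\right)
\]
per state, where the constants $14$ and $4$ come from the constants in Paulin's theorem. A union bound over the $m$ states yields the second term of the claimed bound.

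\emph{Step 2: row deviations given enough visits.} On $G_t$, a union bound over rows gives
\[
\prob{\norm{P - \widehat P(t)}_{1,\infty} > \epsilon,\ G_t} \le \sum_x \prob{\norm{\widehat P_x(t) - P_x}_1 > \epsilon,\ N_x(t) \ge t\pi_\ast/2}.
\]
For each row, we will use the strong Markov property (the skeleton/"stack of cards" construction): the successive next-states observed after visits to $x$ are i.i.d.\ draws from $P_x$. Hence $\widehat P_x(t)$ is the empirical distribution of the first $N_x(t)$ elements of an i.i.d.\ sequence.

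Because $N_x(t)$ is random, we cannot condition on it directly. We therefore control
\[
\bigcup_{n \ge t\pi_\ast/2} \left\{ \norm{\widehat P_{x,n} - P_x}_1 > \epsilon \right\}
\]
through the expression $\norm{\widehat P_{x,n} - P_x}_1 = \max_{S \subseteq [m]} 2\bigl(\widehat P_{x,n}(S) - P_x(S)\bigr)$. For each fixed $S$, the partial sums $\sum_{k \le n}\bigl(\indicator{Y_k \in S} - P_x(S)\bigr)$ form a martingale with bounded increments. A maximal Azuma/Freedman inequality, or a peeling argument over $n \in [t\pi_\ast/2,\, t]$, gives a tail of order
\[
\exp\!\left(-c\,\epsilon^2\, t\pi_\ast\right).
\]
To reach the stated polynomial prefactor $2m^2$ instead of $2^m$, we will apply this entrywise: the event $\norm{\cdot}_1 > \epsilon$ forces some entry $y$ to have $|\widehat P_{x,n}(y) - P_x(y)| > \epsilon/m$. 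A two-sided bound for each of the $m^2$ pairs $(x,y)$ then gives the prefactor $2m^2$. The exponent constant $16m$ should come out of this step, possibly through a Bernstein bound that exploits the variance $P_x(y)(1 - P_x(y))$ instead of the crude $\epsilon/m$ loss. We will adjust the entrywise thresholds so that the exponent is $\epsilon^2 t\pi_\ast/(16m)$, as in \citet{wolfer19a}.

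\emph{Main obstacle.} The delicate point is Step 2. Both the random sample size $N_x(t)$ and the dependence between rows must be handled without conditioning incorrectly. Using the skeleton construction together with a maximal inequality that holds uniformly over $n \ge t\pi_\ast/2$ resolves this. Matching the precise constant $16m$ may require following Wolfer and Kontorovich's exact argument, and we would cite their statement for this constant if necessary. Combining Steps 1 and 2 completes the proof.
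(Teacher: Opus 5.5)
Your proposal is correct and uses the paper's decomposition: a count event, row deviations on that event, and a union bound, with Paulin's Proposition~3.15 and Theorem~3.11 for the counts. It differs in two ways, and both affect the constants.

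\textbf{How the two routes differ.} The paper does not prove the row step. It quotes \citet[Theorem~1]{wolfer19a} as $\mathbb{P}[\|\widehat P_x(t)-P_x\|_1>\epsilon,\ N_x(t)\in[0.5t\pi_x,1.5t\pi_x]]\le 2m\,e^{-\epsilon^2t\pi_\ast/(16m)}$. It therefore has to control the two-sided window, with deviation $0.5t\pi_x$. You instead use a one-sided count event and rebuild the row bound from the i.i.d.\ successor sequence.

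\textbf{What your Step~1 buys.} Your route gives the stated second term exactly. Take $u=0.5t\pi_\ast$, $V\le 1/4$, $C\le 1$. Paulin's one-sided exponent is then at least $u^2\gamma_{\rm ps}/(2(t+1/\gamma_{\rm ps})+10t\pi_\ast)$. Since $\pi_\ast\le 1/m\le 1/2$, the denominator is at most $7t+2/\gamma_{\rm ps}$. After the square root this becomes $14t+4/\gamma_{\rm ps}$, with prefactor $\sqrt{\Pi_\mu}$ and no $\sqrt 2$.

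Say explicitly that the $14$ comes from $\pi_\ast\le 1/2$. The paper's own bounds $\pi_x\le 1$ and $\pi_x(1-\pi_x)\le 1/4$ actually give $2(2(t+1/\gamma_{\rm ps})+10t)=24t+4/\gamma_{\rm ps}$. Its two-sided window would also naturally carry the $\sqrt 2$ seen in \Cref{lem:freq_vector}.

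\textbf{What it costs.} Step~2 must now hold on $\{N_x(t)\ge t\pi_\ast/2\}$ alone. So citing Wolfer--Kontorovich is not a drop-in fallback unless you re-add the upper-tail count term.

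\textbf{How to close Step~2 yourself.} Do not use a maximal inequality over $n\le t$ for $\{S_n\ge n_0\delta\}$: its exponent scales like $t\pi_\ast^2$. Do not use a plain union over $n$ either, since it inflates the prefactor. Instead, apply Ville's inequality to the Bernstein supermartingale $\exp\bigl(\lambda S_n-n\lambda^2\sigma^2/(2(1-\lambda/3))\bigr)$. This gives $\mathbb{P}[\exists n\ge n_0:\ S_n\ge n\delta]\le \exp\bigl(-n_0\delta^2/(2(\sigma^2+\delta/3))\bigr)$, uniformly in $n$.

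Use entry thresholds $\epsilon_y=\frac{\epsilon}{2}\bigl(\sqrt{P_x(y)/m}+1/m\bigr)$. They sum to at most $\epsilon$ by Cauchy--Schwarz. You may assume $\epsilon\le 2$, since otherwise the probability is zero. Then each direction has per-sample rate at least $\epsilon^2/(8m)$. Taking $n_0=t\pi_\ast/2$ gives $2e^{-\epsilon^2t\pi_\ast/(16m)}$ per pair $(x,y)$, and the union bound gives exactly $2m^2e^{-\epsilon^2t\pi_\ast/(16m)}$.
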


\begin{proof}[Proof of \Cref{lem:usable_form}]
    
From \citep[Theorem~1]{wolfer19a}, for each $x \in [m]$ and $t\geq 1$, we have
\begin{equation}\label{eq:conc_on_goodset}
    \probunder{P, \mu}{\lVert \widehat{P}_{(x, \cdot)}(t) - P\rVert_1  > \epsilon, N_x(t) \in [0.5t\pi_x, 1.5t\pi_x]} \leq 2m\exp{\left(-\frac{\epsilon^2t\pi_{\ast}}{16m}\right)}.
\end{equation}
Furthermore, from \cref{lem:freq_vector} (which in turn follows from \citet{paulin2018concentrationinequalitiesmarkovchains} and is given as Lemma 6 in \citet{wolfer19a}), for each $x\in [m]$, we have
\begin{equation}
    \probunder{P, \mu}{N_x(t) \notin [0.5t\pi_x, 1.5 t\pi_x]} \leq \sqrt{\Pi_\mu} \exp\left( - \frac{\gamma_{\rm ps}(0.5t\pi_x)^2}{2(8(t+1/\gamma_{\rm ps})\pi_x(1-\pi_x)+ 10t\pi_x)}\right).
\end{equation}
Let
\begin{equation*}
    E \defas  \frac{\gamma_{\rm ps}(0.5t\pi_x)^2}{2(8(t+1/\gamma_{\rm ps})\pi_x(1-\pi_x)+ 10t\pi_x)}.
\end{equation*}
To simplify $E$ further, we note that
$\gamma_{\rm ps}(0.5t\pi_x)^2 \geq \gamma_{\rm ps}(0.5t\pi_{\ast})^2$.
The denominator term (say $D$) in the expression for $E$ may be upper bounded as
\begin{align*}
    D &= 2(8(t+1/\gamma_{\rm ps})\pi_x(1-\pi_x)+ 10t\pi_x)  \leq 2(2(t+1/\gamma_{\rm ps})+ 10t) = 14t+4/\gamma_{\rm ps},
\end{align*}
where the penultimate line follows by using the fact that $\pi_x\leq 1, \pi_x(1-\pi_x) \leq 1/4$ for any $x \in [m]$.
Therefore, 
\[
E \geq \frac{\gamma_{\rm ps}(0.5t\pi_{\ast})^2}{14t+4/\gamma_{\rm ps}},
\]
from which it follows that
\begin{equation}
     \probunder{P, \mu}{N_x(t) \notin [0.5t\pi_x, 1.5 t\pi_x]} \leq \sqrt{\Pi_\mu} \exp \left( - \frac{\gamma_{\rm ps}(0.5t\pi_{\ast})^2}{14t+4/\gamma_{\rm ps}}\right).
\end{equation}
An application of the union bound then yields
\begin{align*}
    \probunder{P, \mu}{\norm{P-\widehat{P}(t)}_{1, \infty} > \epsilon} \leq &\sum_{x\in [m]} \probunder{P, \mu}{\lVert \widehat{P}_{(x, \cdot)}(t) - P\rVert_1  > \epsilon, N_x(t) \in [0.5t\pi_x, 1.5t\pi_x]} \\
    &\hspace{4cm} + \sum_{x\in [m]} \probunder{P, \mu}{N_x(t) \notin [0.5t\pi_x, 1.5 t\pi_x]} \\
    &\leq  2m^2\exp{\left(-\frac{\epsilon^2t\pi_{\ast}}{16m}\right)} + m\sqrt{\Pi_\mu} \exp \left( - \frac{\gamma_{\rm ps}(0.5t\pi_{\ast})^2}{14t+4/\gamma_{\rm ps}}\right),
\end{align*}
thereby establishing the desired result.
\end{proof}

\subsection{Construction of Mixture Martingale}\label{sec:mixture_martingale}
 We use a mixture martingale construction from MDP literature \citep{jonsson2020planning,al2021navigating}. We specialize Lemma 15 of \citet{al2021navigating} (in turn adapted from Proposition 1 of \citet{jonsson2020planning}) to our setting. While exactly the same proof follows, we provide the details below for completeness. We collect some results to be used in the proof.

\begin{lemma}[Lemma 3 in \cite{jonsson2020planning}] \label{lem:kl_ineq_lem3}
    For $q, p\in\Delta_{m}$ and $\lambda \in \reals^{m-1}\times \{0\}$:
    $$
    \lambda^Tq - \phi_q(\lambda) = \kl{q}{p} - \kl{q}{p^\lambda},
    $$
    where $p^\lambda = \nabla\phi_p(\lambda)$.
\end{lemma}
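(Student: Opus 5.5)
The plan is a direct computation once the objects in the statement are made explicit. Following \citet{jonsson2020planning}, I will take $\phi_p$ to be the log-partition (cumulant) function of the categorical family based at $p$:
\[
\phi_p(\lambda) \defas \log\Big(\sum_{j\in[m]} p_j e^{\lambda_j}\Big), \qquad \lambda\in\reals^{m-1}\times\{0\}.
\]
Its gradient is the exponentially tilted distribution
\[
p^\lambda_j = \big(\nabla\phi_p(\lambda)\big)_j = \frac{p_j e^{\lambda_j}}{\sum_k p_k e^{\lambda_k}} = p_j\, e^{\lambda_j-\phi_p(\lambda)} .
\]
This lies in $\Delta_m$ and has exactly the same support as $p$. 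The constraint $\lambda_m=0$ is only an identifiability normalization, since adding a constant to $\lambda$ leaves $p^\lambda$ unchanged; it plays no role in the identity.

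The cumulant on the left-hand side must be $\phi_p$, the one whose gradient defines $p^\lambda$, and I will read the statement that way. With the subscript $q$, the identity fails already at $p^\lambda=p$ when $q\neq p$.

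First step: suppose $q\ll p$. Then $q\ll p^\lambda$ as well, so both divergences are finite. Subtracting them term by term gives
\[
\kl{q}{p}-\kl{q}{p^\lambda} = \sum_{j:\,q_j>0} q_j \log\frac{p^\lambda_j}{p_j}.
\]
Here the $q_j\log q_j$ terms cancel, and every index in the sum has $p_j>0$.

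Second step: substitute $\log(p^\lambda_j/p_j)=\lambda_j-\phi_p(\lambda)$ into this sum. Using $\sum_j q_j=1$, it collapses to
\[
\sum_j q_j\lambda_j - \phi_p(\lambda) = \lambda^\top q-\phi_p(\lambda),
\]
which is the claim.

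The only delicate point is the boundary case $q\not\ll p$. Then both $\kl{q}{p}$ and $\kl{q}{p^\lambda}$ equal $+\infty$, because $p$ and $p^\lambda$ have the same support, and the right-hand side is undefined. I would therefore state the lemma under $q\ll p$. In the mixture-martingale argument it is applied with $q$ the empirical row $\widehat Q_t(x,\cdot)$ and $p=P(x,\cdot)$, and only pairs with finite divergence contribute to the infimum defining $L_t$, so nothing is lost. The lemma is then used in its equivalent variational form $\kl{q}{p}\ge \lambda^\top q-\phi_p(\lambda)$, with equality when $p^\lambda=q$ on the support.
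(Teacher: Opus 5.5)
Your computation is correct. The paper gives no proof of its own, only a citation to \citet{jonsson2020planning}, and your calculation $\kl{q}{p}-\kl{q}{p^\lambda}=\sum_j q_j\log(p^\lambda_j/p_j)=\sum_j q_j(\lambda_j-\phi_p(\lambda))=\lambda^\top q-\phi_p(\lambda)$ is the standard argument. You are right on two further points. The subscript must be $p$, which is how the lemma is actually used in \Cref{sec:mixture_martingale}, where $\phi_x=\phi_{P(x,\cdot)}$. And $q\ll p$ is needed for the right-hand side to be defined.

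Two of your side remarks are wrong, though.

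\textbf{The witness for the typo.} At $p^\lambda=p$ both sides vanish under either reading. Indeed, $p^\lambda=p$ forces $\lambda_j=\phi_p(\lambda)=:c$ on the support of $p$. Then for $q\ll p$ one has $\phi_q(\lambda)=\phi_p(\lambda)=\lambda^\top q=c$, so both left-hand sides are $0$, like the right-hand side.

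The $\phi_q$ version fails instead at $p^\lambda=q\neq p$. There the right-hand side is $\kl{q}{p}=\lambda^\top q-\phi_p(\lambda)$. However, $\phi_q(\lambda)=\phi_p(2\lambda)-\phi_p(\lambda)$, which is strictly greater than $\phi_p(\lambda)$ by strict Jensen. For a concrete case, take $m=2$, $p=(1/2,1/2)$, $\lambda=(\log 3,0)$ and $q=p^\lambda=(3/4,1/4)$. This gives $\phi_p(\lambda)=\log 2$ but $\phi_q(\lambda)=\log(5/2)$.

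\textbf{How the lemma is used downstream.} The paper uses the exact identity, not the variational inequality $\kl{q}{p}\ge\lambda^\top q-\phi_p(\lambda)$. That inequality would only upper-bound $M^\lambda_t(x)$, whereas the argument needs a lower bound on $M_t(x)$. The identity is applied with first argument $\widehat{p}_x(t)$, base $P(x,\cdot)$, and $\lambda=\lambda_q$ chosen so that $\nabla\phi_x(\lambda_q)$ equals the Dirichlet mixing variable. The absolute continuity $\widehat{p}_x(t)\ll P(x,\cdot)$ holds almost surely because the data are generated by $P$. It has nothing to do with the infimum defining $L_t$.
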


\begin{lemma}[Theorem 11.1.3 in \cite{cover2012elements}]\label{lem:cover_multionimial}
Let $N \in \naturals, x\in \{0, 1, \ldots,  N\}^k$ such that $\sum_{i\in[k]}x_i = N$ then:
$$
\binom{N}{x} \defas \frac{N!}{\prod_{i=1}^k x_i!} \leq \exp(NH(x/N)),
$$
where $H(x/N)$ is the entropy of the distribution over $k$ letters with corresponding probabilities $\{x_i/N\}_{i\in[k]}$.
\end{lemma}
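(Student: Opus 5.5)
The plan is the standard probabilistic (or multinomial-expansion) argument. Set $p \defas x/N \in \Delta_k$, so that $p_i = x_i/N$, with the convention $0^0 = 1$ and $0\log 0 = 0$. Expand $1 = (p_1+\cdots+p_k)^N$ by the multinomial theorem:
\[
1 = \sum_{y} \binom{N}{y} \prod_{i=1}^k p_i^{y_i},
\]
where the sum ranges over all $y \in \{0,1,\ldots,N\}^k$ with $\sum_i y_i = N$. Every term in this sum is nonnegative. Keeping only the single term $y = x$ therefore gives
\[
1 \;\ge\; \binom{N}{x} \prod_{i=1}^k p_i^{x_i}.
\]

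Next I evaluate the retained product explicitly:
\[
\prod_{i=1}^k p_i^{x_i} = \exp\Big(\sum_{i} x_i \log\frac{x_i}{N}\Big) = \exp\Big(N\sum_i p_i\log p_i\Big) = \exp\big(-N H(x/N)\big).
\]
Indices with $x_i = 0$ contribute the factor $0^0 = 1$, which matches the convention $0\log 0 = 0$ in the entropy. Rearranging $1 \ge \binom{N}{x}\exp(-NH(x/N))$ then yields $\binom{N}{x} \le \exp(NH(x/N))$, which is the claim.

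There is no real obstacle here. The only point needing care is the zero entries: $p_i^{y_i}$ must be read as $1$ when $p_i = y_i = 0$. This reading is exactly what makes the multinomial expansion of $(\sum_i p_i)^N$ valid with the zero coordinates present; alternatively, one can simply drop those coordinates, reducing to a smaller $k$ with all $p_i > 0$.
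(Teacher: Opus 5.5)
Your argument is correct and complete, including the handling of zero coordinates via $0^0=1$. The paper does not prove this lemma itself but cites Theorem 11.1.3 of Cover and Thomas, whose proof is the same argument phrased via types: under the product measure $p^{\otimes N}$ with $p=x/N$, each of the $\binom{N}{x}$ sequences of type $p$ has probability $e^{-NH(p)}$, and their total probability is at most $1$, which is exactly your single-term bound on the multinomial expansion of $(\sum_i p_i)^N$.
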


Now, we can prove the main result of this section. Note that for ease of reading, we prove \Cref{clm:martingale_each_state} and \Cref{clm:mrtingale_product} after the main proof.  

\begin{lemma}\label{lem:alpha_correctness}
For $t \geq 1$ and $\alpha \in (0,1)$, let $\beta(t, \alpha) \defas \log(1/\alpha) + (m-1)\sum_{x\in [m]}\log\left(e\left(1+\frac{N_x(t)}{m-1}\right)\right)$. Then for all $\alpha \in (0,1)$ and $P\in \cP$,
\[
\probunder{P}{\exists t \geq 1: \quad \sum_{x\in [m]}N_x(t)D(\widehat{P}(t)(x, \cdot), P(x, \cdot)) \geq \beta(t, \alpha)} \leq \alpha.
\]
\end{lemma}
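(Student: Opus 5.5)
We construct, for a fixed $P\in\cP$, a nonnegative supermartingale $M_t$ with $M_0=1$ that dominates $\exp\bigl(\sum_x N_x(t)\,\kl{\widehat P(t)(x,\cdot)}{P(x,\cdot)} - (m-1)\psi_t\bigr)$, where $\psi_t=\sum_x\log\bigl(e(1+N_x(t)/(m-1))\bigr)$. Given such an $M_t$, Ville's inequality yields $\probunder{P}{\exists t: M_t\ge 1/\alpha}\le\alpha$. On the event in the statement, $M_t \ge e^{\beta(t,\alpha)-(m-1)\psi_t}=1/\alpha$, so the lemma follows.

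\textbf{Per-state mixture martingale (\Cref{clm:martingale_each_state}).}

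Fix a state $x$ and consider only the successive transitions out of $x$. By the strong Markov property, under $P$ the successors of the visits to $x$ are i.i.d.\ with law $p=P(x,\cdot)$. Let $n_y$ be the number of transitions $x\to y$ after $N_x$ visits. Define
\[
M^x_t \defas \int_{\Delta_m}\prod_{y}\Bigl(\frac{q(y)}{p(y)}\Bigr)^{n_y}\,\mathrm{d}\,\mathrm{Dir}(1,\dots,1)(q).
\]
This is a mixture of likelihood ratios, each a $P$-martingale in the filtration $\cF_t$. Its value changes only at steps where $X_{t-1}=x$, and there the conditional expectation of the increment factor is $1$. Hence $M^x_t$ is a nonnegative martingale with $M^x_0=1$.

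The Dirichlet integral evaluates to $\frac{(m-1)!\prod_y n_y!}{(N_x+m-1)!}\prod_y p(y)^{-n_y}$. Apply \Cref{lem:cover_multionimial} in the form $\prod_y n_y!/N_x!\ge \exp(-N_xH(\hat q))$, together with $\binom{N_x+m-1}{m-1}\le (e(1+N_x/(m-1)))^{m-1}$. This gives
\[
M^x_t\ \ge\ \exp\Bigl(N_x\,\kl{\widehat P(t)(x,\cdot)}{p} - (m-1)\log\bigl(e(1+N_x/(m-1))\bigr)\Bigr),
\]
since $-N_xH(\hat q)-\sum_y n_y\log p(y)=N_x\,\kl{\hat q}{p}$. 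An alternative route is the Gaussian/exponential-family mixture via \Cref{lem:kl_ineq_lem3}, as in \citet{jonsson2020planning}; the Dirichlet computation is more direct.

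\textbf{Product over states (\Cref{clm:mrtingale_product}).}

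Set $M_t=\prod_x M^x_t$. At each step $t$, only the factor for $x=X_{t-1}$ changes, and its conditional expectation factor is $1$. Therefore $M_t$ is again a nonnegative martingale with $M_0=1$. Multiplying the per-state lower bounds gives exactly the domination $M_t\ge \exp\bigl(\sum_xN_x\,\mathrm{KL}-(m-1)\psi_t\bigr)$, which is \eqref{eq:product_lowerbound}. Ville's inequality then completes the argument above. The bound holds uniformly over initial distributions, because $M_0=1$ regardless of $X_0$.

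\textbf{Main obstacle.} The delicate step is the combinatorial lower bound on the Dirichlet mixture: matching the multinomial coefficient against $e^{N H}$, and the binomial $\binom{N+m-1}{m-1}$ against $(e(1+N/(m-1)))^{m-1}$, with exactly the constants appearing in $\beta$. I would first prove $\binom{N+m-1}{m-1}\le\bigl(e(N+m-1)/(m-1)\bigr)^{m-1}$ using $\binom{a}{b}\le(ea/b)^b$. States with $N_x=0$ contribute trivially, with factor $1$ on both sides.
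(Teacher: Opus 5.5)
Your proposal is correct and follows essentially the same route as the paper. Both use a per-state Dirichlet$(1,\dots,1)$ mixture, lower-bounded via the multinomial entropy bound and $\binom{N+m-1}{m-1}\le (e(1+N/(m-1)))^{m-1}$, then take a product over states and apply Ville's inequality. Your direct mixture of likelihood ratios $\prod_y (q(y)/p(y))^{n_y}$ is just the paper's exponential-tilt martingale with $\lambda=\lambda_q$ (via \Cref{lem:kl_ineq_lem3}) in different notation; it has the minor advantage that when $P(x,\cdot)$ has zeros it still gives a nonnegative supermartingale, which is all Ville's inequality needs.
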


\begin{proof}
    Let $|S| = m$. For the following definitions, $\lambda \in \reals^{m-1}\times \{0\}, \lambda = (\lambda_1, \lambda_2, \ldots, , \lambda_{m-1}, 0)$ and any distribution $p = (p_1, p_2, \ldots,  p_m)$ over the $m$ states:
    \begin{enumerate}
        \item   $\lambda^Tp = \sum_{i=1}^{m-1}\lambda_i p_i$.
        \item $\phi_p(\lambda) \defas \log(p_m+\sum_{i=1}^{m-1}p_ie^{\lambda_i}).$
    \end{enumerate}

    \textbf{Constructing martingale for each state $x \in [m]$}.

    Let $\widehat{p}_x(t)$ be the row corresponding to the state $x$ in the empirical transition matrix $\widehat{P}(t)$ and $\phi_x$ be the corresponding function for the true distribution for this row, i.e. $\phi_x(\lambda) = \phi_{P(x, \cdot)}(\lambda)$.
    \begin{equation}
        M^\lambda_t(x) = \exp(N_x(t)(\lambda^T\widehat{p}_x(t) - \phi_x(\lambda)).
    \end{equation}

    \begin{claim}\label{clm:martingale_each_state}
        $M^\lambda_t(x)$ is a martingale adapted to the sigma algebra generated sequence of states sampled from $P$ i.e. $\cF_t = \sigma(X_0, X_1, \ldots, X_t).$
    \end{claim}

    Now, we define the mixture martingale given by the prior over $\lambda$ given by $\lambda_q = (\nabla \phi_x)^{-1}(q)$ where $q$ follows the Dirichlet distribution i.e. $q \sim Dir(1, 1, \ldots, 1)$.

    \begin{align*}
        M_t(x) &\defas \int M^{\lambda_q}_{t}(x) \frac{\Gamma(m)}{\prod_{i=1}^m \Gamma(1)} \prod_{i=1}^m q_i dq \\
        &= \int \exp(N_x(t)(\lambda_q^T\widehat{p}_x(t) - \phi_x(\lambda_q))\frac{\Gamma(m)}{\prod_{i=1}^m \Gamma(1)} \prod_{i=1}^m q_i dq \\
        &= \int \exp[N_x(t)({\rm KL}(\widehat{p}_x(t), p_x) - {\rm KL}(\widehat{p}_x(t), q)](m-1)!\prod_{i=1}^m q_i dq \tag{\Cref{lem:kl_ineq_lem3}}\\
        &= \exp[N_x(t)({\rm KL}(\widehat{p}_x(t), p_x)+ H(\widehat{p}_t(x))] (m-1)! \int \prod_{i=1}^m q_i^{1+N_x(t)\widehat{p}_{x, i}(t)} dq \tag{$\because -{\rm KL}(\widehat{p}_x(t), q) = H(\widehat{p}_x(t)) +\sum\widehat{p}_x(t)(i)\log(q(i))$}\\
        &= \exp[N_x(t)({\rm KL}(\widehat{p}_x(t), p_x)+ H(\widehat{p}_t(x))] \frac{(m-1)! \prod_{i=1}^m \Gamma(1+N_x(t)\widehat{p}_{x, i}(t))}{\Gamma(N_x(t)+m)} \\
        &= \exp[N_x(t)({\rm KL}(\widehat{p}_x(t), p_x)+ H(\widehat{p}_t(x))] \frac{(m-1)! \prod_{i=1}^m (N_x(t)\widehat{p}_{x, i}(t))!}{(N_x(t)+m-1)!} \\
        &= \exp[N_x(t)({\rm KL}(\widehat{p}_x(t), p_x)+ H(\widehat{p}_t(x))] \frac{\prod_{i=1}^m (N_x(t)\widehat{p}_{x, i}(t))!}{N_x(t)!} \frac{(m-1)!N_x(t)!}{(N_x(t)+m-1)!} \\
        &=  \exp[N_x(t)({\rm KL}(\widehat{p}_x(t), p_x)+ H(\widehat{p}_t(x))] \frac{1}{\binom{N_x(t)+m-1}{m-1}} \frac{1}{\binom{N_x(t)}{(N_x(t)\widehat{p}_{x, i}(t))_{i\in[k]}}},
    \end{align*}
where $\widehat{p}_{x, i}(t)$ is the $i^{\text{th}}$ component of the probability vector. Now, using \Cref{lem:cover_multionimial} to control the binomial coefficients:

    \begin{align*}
        M_t(x) &\geq \exp[N_x(t)({\rm KL}(\widehat{p}_x(t), p_x)+ H(\widehat{p}_t(x)) - N_x(t)H(\widehat{p}_x(t)) \\&\hspace{5cm}- (N_x(t)+m-1)H((m-1)/(N_x(t)+m-1)] \\
        &= \exp[N_x(t){\rm KL}(\widehat{p}_x(t), p_x) - (N_x(t)+m-1)H((m-1)/(N_x(t)+m-1)].
    \end{align*}

    To upper bound the entropic term:
    \begin{align*}
        (N_x(t)+m-1)H((m-1)/(N_x(t)+m-1) &= (m-1)\logfrac{N_x(t)+m-1}{m-1} \\&\hspace{3cm}+ N_x(t)\logfrac{N_x(t)+m-1}{N_x(t)} \\
        &= (m-1)\log\left(1+\frac{N_x(t)}{m-1}\right) + N_x(t)\log\left(1+\frac{m-1}{N_x(t)}\right) \\
        &\leq (m-1)\log\left(1+\frac{N_x(t)}{m-1}\right) + m-1 \tag{$\because \log(1+x) \leq x$} \\
        &= (m-1)\log\left(e\left(1+\frac{N_x(t)}{m-1}\right)\right).
    \end{align*}

    Thus, we have
    $$
    M_t(x) \geq  \exp\left[N_x(t){\rm KL}(\widehat{p}_x(t), p_x) -(m-1)\log\left(e\left(1+\frac{N_x(t)}{m-1}\right)\right)\right].
    $$

    \textbf{Product martingale over all states}
    
    Finally, we take a product over all the states to get our final martingale.

    \begin{equation}
        M_t \defas \prod_{x\in [m]} M_t(x).
    \end{equation}
    Using the lower bound on each state martingale, we have
    \begin{equation}\label{eq:product_lowerbound}
        M_t \geq \exp\left[\sum_{x\in [m]}N_x(t){\rm KL}(\widehat{p}_x(t), p_x) -(m-1)\sum_{x\in [m]}\log\left(e\left(1+\frac{N_x(t)}{m-1}\right)\right)\right].
    \end{equation}

    \begin{claim}\label{clm:mrtingale_product}
        $M_t$ is a martingale adapted to the sigma algebra generated by the sequence of states sampled from the Markov Chain $P$.
    \end{claim}

    Applying Ville's inequality:
    \begin{equation}
        \prob{\exists t, M_t \geq 1/\alpha} \leq \alpha \expec{M_0} = \alpha.
    \end{equation}

    From~\eqref{eq:product_lowerbound} we have
    \begin{align*}
        &\prob{\exists t, \sum_{x\in [m]}N_x(t){\rm KL}(\widehat{p}_x(t), p_x) \geq \log(1/\alpha) + (m-1)\sum_{x\in [m]}\log\left(e\left(1+\frac{N_x(t)}{m-1}\right)\right) } \\&\hspace{8cm}\leq \prob{\exists t, M_t \geq 1/\alpha} \\
        &\hspace{8cm}\leq \alpha.
    \end{align*}
    
\end{proof}

\begin{proof}[Proof of \Cref{clm:martingale_each_state}]
    Suppose $X_{t-1} \neq x$, then $N_x(t) = N_x(t-1)$ and $N_{x, y}(t) = N_{x, y}(t-1) \forall y\in [m]$, therefore $\widehat{p}_x(t) = \widehat{p}_x(t-1)$. Thus, since none of the data dependent parameters change: $M^\lambda_t(x) = M^\lambda_{t-1}(x)$.

    If $X_{t-1} = x$, then:
    \begin{align*}
        \expec{\lambda^T\widehat{p}_x(t) | X_0, \ldots,  X_{t-1} -x} &= \expec{\lambda^T\widehat{p}_x(t) | X_0, \ldots,  X_{t-1} =x} \\
        &= \expecover{X_t\sim P(x, \cdot)}{\lambda^T\left(\frac{N_x(t-1)\widehat{p}_x(t-1) + \Bar{X}_t}{N_x(t-1)+1}\right)} \\
        &= \frac{1}{N_x(t-1)+1} \expecover{X_t\sim P(x, \cdot)}{\lambda^T\left(N_x(t-1)\widehat{p}_x(t-1) + \Bar{X}_t\right)},
    \end{align*}

    where $\Bar{X}_t \in \{0,1\}^m$ is the one-hot encoding of the sample $X_t$ i.e. $\Bar{X}_t(i) = \indicator{X_t = i} \forall i \in [m]$. Now, we have:

    \begin{align*}
        \mathbb{E}&\left[M^\lambda_t(x) | X_0, X_1, \ldots, X_{t-1}=x\right] = \expec{\exp(N_x(t)(\lambda^T\widehat{p}_x(t) - \phi_x(\lambda))| X_0, X_1, \ldots, X_{t-1}=x} \\
        &= \expecover{X_t\sim P(x, \cdot)}{\exp\left((N_x(t-1)+1)(\lambda^T\left(\frac{N_x(t-1)\widehat{p}_x(t-1) + \Bar{X}_t}{N_x(t-1)+1}\right) - \phi_x(\lambda))\right) | \dots }  \\
        &= \expecover{X_t\sim P(x, \cdot)}{\exp\left((\lambda^T\left(N_x(t-1)\widehat{p}_x(t-1) + \Bar{X}_t\right) - (N_x(t-1)+1)\phi_x(\lambda))\right) | \dots } \\
        &= M^\lambda_{t-1}(x) \expecover{X_t\sim P(x, \cdot)}{\exp(\lambda^T\Bar{X}_t - \phi_x(\lambda))| \ldots, } \\
        &= M^\lambda_{t-1}(x).
    \end{align*}
\end{proof}

\begin{proof}[Proof of \Cref{clm:mrtingale_product}]
    Fix some state $x\in [m]$. Then observe: 
    \begin{align*}
        \expec{M_t|X_0, X_1, ..X_{t-1} = x} &= \expec{M_t(x) \prod_{y\neq x}M_t(y)| X_0, X_1, ..X_{t-1} = x} \\
        &= \expec{M_t(x) \prod_{y\neq x}M_{t-1}(y)| X_0, X_1, ..X_{t-1} = x} \\
        &= \prod_{y\neq x}M_{t-1}(y) \expec{M_t(x)|X_0, X_1, ..X_{t-1} = x} \\
        &= M_{t-1},
    \end{align*}
    where the second to last equality follows from observing the conditional independence of $M_t(x)$ and $\{M_{t-1}(y):y\neq x\}$. Since this holds for each state in $S$, we have the martingale property by applying the tower rule.
\end{proof}

\subsection{Continuity Properties}

\begin{lemma}\label{lem:lower_semicontinuity}
    Let $C(\epsilon) \defas \inf_{Q': \norm{Q'-Q}_{1, \infty} \leq \epsilon} \inf_{P\in\cP}  \sum_{x\in [m]} (\pi_x - \epsilon) \kl{Q'_x}{P_x},$ then:
    
    $$\liminf_{\epsilon\downarrow0} C(\epsilon)\geq D_{\cM}^{\inf} (Q, \cP).$$
\end{lemma}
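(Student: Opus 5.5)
The plan is a compactness-plus-lower-semicontinuity argument, using that $\cP$ is compact (the standing assumption of \Cref{sec:algo_optimality}) and that $\{Q' \in \cM : \norm{Q'-Q}_{1,\infty}\le \epsilon\}$ is a compact subset of $\cM$.

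\emph{Step 1 (monotonicity).} Restrict to $0<\epsilon<\pi_\ast$, so that every weight $\pi_x-\epsilon$ is strictly positive. As $\epsilon$ decreases, the feasible set of $Q'$ shrinks. At the same time each weight $\pi_x-\epsilon$ increases, and every KL term is nonnegative. Hence $C(\epsilon)$ is nonincreasing in $\epsilon$, so $\lim_{\epsilon\downarrow 0}C(\epsilon)$ exists in $[0,+\infty]$ and equals the $\liminf$. If this limit is $+\infty$ there is nothing to prove, so assume it is finite.

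\emph{Step 2 (near-minimizers).} Take $\epsilon_n\downarrow 0$. For each $n$, pick $Q'_n$ with $\norm{Q'_n-Q}_{1,\infty}\le\epsilon_n$ and $P_n\in\cP$ such that
\[
\sum_{x\in[m]}(\pi_x-\epsilon_n)\,\kl{Q'_{n,x}}{P_{n,x}}\le C(\epsilon_n)+\tfrac1n .
\]
This is possible because $C(\epsilon_n)$ is finite. By construction $Q'_n\to Q$. By compactness of $\cP$, we can pass to a subsequence along which $P_n\to P^\star\in\cP$.

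\emph{Step 3 (lower semicontinuity).} The map $(q,p)\mapsto\kl{q}{p}$ on $\Delta_m\times\Delta_m$ is jointly lower semicontinuous, with the usual conventions $0\log(0/p)=0$ and $q\log(q/0)=+\infty$ for $q>0$. One way to see this is entrywise: each term $q_j\log(q_j/p_j)$ is lsc. Another is to view KL as a supremum of continuous functions (Donsker--Varadhan). Since all KL terms are nonnegative and $\pi_x-\epsilon_n\to\pi_x>0$, we get
\[
\lim_n C(\epsilon_n)\ \ge\ \liminf_n\sum_{x}(\pi_x-\epsilon_n)\kl{Q'_{n,x}}{P_{n,x}}\ \ge\ \sum_x \pi_x\liminf_n \kl{Q'_{n,x}}{P_{n,x}}\ \ge\ \sum_x\pi_x\kl{Q_x}{P^\star_x}.
\]
The right-hand side is $D_{\cM}(Q,P^\star)\ge D_{\cM}^{\inf}(Q,\cP)$. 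When $Q\not\ll P^\star$ it is $+\infty$, which is consistent with the finiteness assumption only if that case does not occur, so the bound is still valid.

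The main obstacle is Step 3, specifically handling possibly infinite KL values and zero entries of $P_n$ converging to zero. Joint lsc with the extended-value conventions covers this cleanly. Nonnegativity of KL is what allows passing the $\liminf$ through the weighted sum with converging positive weights.
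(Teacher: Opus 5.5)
Your proof is correct. It uses the same two ingredients as the paper, compactness of $\cP$ and joint lower semicontinuity of KL, but packages them differently.

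The paper proves that $\epsilon\mapsto C(\epsilon)$ is lower semicontinuous by applying Berge's Minimum Theorem twice. It first minimizes over $P$ with the constant compact correspondence $\cP$. It then minimizes over $Q'$ with the ball correspondence $\Gamma(\epsilon)=\{Q'\in\cM:\norm{Q'-Q}_{1,\infty}\le\epsilon\}$, whose compactness and upper semicontinuity it proves separately. Finally it reads off $\liminf_{\epsilon\to0}C(\epsilon)\ge C(0)=D_{\cM}^{\inf}(Q,\cP)$.

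You prove only the needed inequality at the endpoint, via near-minimizers and sequential compactness. This is more elementary and avoids some technicalities the paper glosses over:
\begin{itemize}
\item Since $\norm{Q'_n-Q}_{1,\infty}\le\epsilon_n$ forces $Q'_n\to Q$ outright, you need no compactness or semicontinuity of the $Q'$-ball. Your opening remark about its compactness is never actually used.
\item You treat the value $+\infty$ explicitly. The paper's stated Minimum Theorem is for real-valued $\phi$, while its objective is extended-real-valued.
\item You work directly at $\epsilon=0$. The paper's correspondence and objective are defined on $\reals_{++}$, so its lsc-at-$0$ conclusion tacitly requires extending them with $\Gamma(0)=\{Q\}$.
\end{itemize}
In exchange, the paper's route gives the stronger structural fact that $C$ is lsc on its whole domain.

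Two minor remarks on your write-up. The monotonicity step is a convenience, not a necessity: a sequence $\epsilon_n\downarrow0$ realizing the $\liminf$ would suffice. In your entrywise lsc claim, the only delicate point is $(q_j,p_j)\to(0,0)$. There the bound $q\log(q/p)\ge q-p\ge -p$ settles it.
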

\begin{proof}

    To show the result, it suffices to show the lower semi-continuity of $C$, as done in \Cref{lem:lsc_c_eps}, since that implies $\liminf_{\epsilon\to 0} C(\epsilon) \geq C(0) = D_{\cM}^{\inf} (Q, \cP)$.
\end{proof}
To show \Cref{lem:lsc_c_eps}, we require some tools from optimization theory. For convenience, we collect some basic definitions and results before proceeding to the formal arguments.

\subsubsection{Preliminaries}
We recall some basic tools from Chapter 6 of \cite{berge1997topological}.

\begin{definition}[Correspondence]
    Let $X, Y$ be topological spaces, a set valued function $f:X\to 2^{Y}$ is said to be a correspondence, i.e. for each $x\in X, f(x) \subseteq Y$.
\end{definition}

\begin{definition}[Lower semi-continuity of a correspondence]
    Let $X, Y$ be topological spaces. A correspondence $f: X\to 2^Y$ is said to be lower semi-continuous at $x_0 \in X$, if for each open set $G$ intersecting $f(x_0)$, there exists a neighbourhood $U(x_0)$ of $x_0$ such that: $x\in U(x_0) \implies f(x) \bigcap G \neq \varnothing $.
\end{definition}

\begin{definition}[Upper semi-continuity of a correspondence]
    Let $X, Y$ be topological spaces. A correspondence $f: X\to 2^Y$ is said to be upper semi-continuous at $x_0 \in X$, if for each open set $G$ containing $f(x_0)$, there exists a neighbourhood $U(x_0)$ of $x_0$ such that: $x\in U(x_0) \implies f(x) \subseteq G $.
\end{definition}

Observe that if correspondence has co-domain is over the reals and is singleton valued, i.e. a real-valued function, these definitions coincide with the more familiar definitions.

\begin{definition}[Upper semi-continuity of real valued functions]
    Let $X$ be a topological space, a function $f:X\to \reals \cup \{-\infty, \infty\}$ is said to be upper semi-continuous at $x_0\in X$ if for every $y$ such that $f(x_0) < y$, there exists a neighbourhood $U$ of $x_0$ such that $f(x) < y \ \forall x \in U$.
\end{definition}

\begin{definition}[Lower semi-continuity of real valued functions]
    Let $X$ be a topological space, a function $f:X\to \reals \cup \{-\infty, \infty\}$ is said to be lower semi-continuous at $x_0\in X$ if for every $y$ such that $f(x_0) > y$, there exists a neighbourhood $U$ of $x_0$ such that $f(x) > y \ \forall x \in U$.
\end{definition}

\begin{remark}
    A correspondence is called lower (resp. upper) semi-continuous if it is lower (resp. upper) semi-continuous at each point in its domain.
\end{remark}

\begin{theorem}[$\text{Maximum Theorem, \citet[pg. 116]{berge1997topological}}$]
    If $\phi:X\times Y\to \reals$ is an upper semi-continuous function and $\Gamma:X\to 2^Y$ is an upper semi-continuous, compact-valued correspondence such that, for each $x, \Gamma(x) \neq \varnothing$, then $M:X\to \reals $ defined as:
    $$
    M(x) \defas \max\{\phi(x,y): y\in \Gamma(x)\},
    $$
    is upper semi-continuous.
\end{theorem}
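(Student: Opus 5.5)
The plan is to prove the two parts of the statement directly from the definitions: first that $M(x)$ is well defined as a maximum, and then upper semi-continuity at an arbitrary point $x_0\in X$. We equip $X\times Y$ with the product topology.

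\emph{Step 1: the maximum is attained.} Fix $x\in X$. The map $y\mapsto\phi(x,y)$ is upper semi-continuous on $Y$, being the restriction of $\phi$ to the slice $\{x\}\times Y$. The set $\Gamma(x)$ is nonempty and compact.
\begin{itemize}
\item Let $s\defas\sup_{y\in\Gamma(x)}\phi(x,y)$ and suppose the supremum is not attained.
\item Then the sets $\{y:\phi(x,y)<c\}$, for $c<s$, are open by upper semi-continuity and cover $\Gamma(x)$.
\item By compactness, finitely many of them cover $\Gamma(x)$, which forces $\sup_{y\in\Gamma(x)}\phi(x,y)<s$, a contradiction.
\end{itemize}
Hence $M(x)$ is a genuine maximum.

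\emph{Step 2: upper semi-continuity at $x_0$.} Fix $c$ with $M(x_0)<c$.
\begin{itemize}
\item For every $y'\in\Gamma(x_0)$ we have $\phi(x_0,y')\le M(x_0)<c$.
\item Upper semi-continuity of $\phi$ at $(x_0,y')$ gives a basic open neighbourhood $U_{y'}\times V_{y'}$ of $(x_0,y')$ on which $\phi<c$.
\item The sets $V_{y'}$ cover the compact set $\Gamma(x_0)$, so finitely many $V_{y_1},\dots,V_{y_k}$ already cover it.
\item Put $G\defas\bigcup_{i=1}^k V_{y_i}$. This is an open set containing $\Gamma(x_0)$.
\item Upper semi-continuity of the correspondence $\Gamma$ at $x_0$ yields a neighbourhood $U_0$ of $x_0$ with $\Gamma(x)\subseteq G$ for all $x\in U_0$.
\item Define $U\defas U_0\cap\bigcap_{i=1}^k U_{y_i}$. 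This is a neighbourhood of $x_0$, since it is a finite intersection.
\end{itemize}

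\emph{Step 3: conclusion.} Take $x\in U$ and any $y''\in\Gamma(x)$.
\begin{itemize}
\item Since $\Gamma(x)\subseteq G$, there is some $i$ with $y''\in V_{y_i}$.
\item Also $x\in U_{y_i}$, so $(x,y'')\in U_{y_i}\times V_{y_i}$ and therefore $\phi(x,y'')<c$.
\item By Step 1 the maximum over $\Gamma(x)$ is attained at some such $y''$, so $M(x)<c$.
\end{itemize}
As $c>M(x_0)$ was arbitrary, $M$ is upper semi-continuous at $x_0$. Since $x_0$ was arbitrary, $M$ is upper semi-continuous on $X$.

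The main obstacle is Step 2. Pointwise upper semi-continuity of $\phi$ only gives neighbourhoods of $x_0$ that depend on the point $y'$. Producing a single neighbourhood that works uniformly over the set-valued $\Gamma(x)$ requires combining compactness of $\Gamma(x_0)$, to reduce to finitely many neighbourhoods, with upper semi-continuity of $\Gamma$, to keep $\Gamma(x)$ inside their union. Step 3 also relies on attainment from Step 1; otherwise the strict inequality $M(x)<c$ could fail in the limit.
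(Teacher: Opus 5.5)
Your proof is correct. The paper gives no proof of its own; it only cites \citet[pg.~116]{berge1997topological}. Your argument is the standard one for that result: the maximum is attained because $\phi(x,\cdot)$ is upper semi-continuous on the compact set $\Gamma(x)$, and then a finite subcover by basic neighbourhoods $U_{y_i}\times V_{y_i}$ is combined with upper semi-continuity of $\Gamma$ at $x_0$, applied to the open set $G=\bigcup_i V_{y_i}$.

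Two minor remarks. First, Step~1 is needed only so that $M(x)$ is a genuine maximum, as the statement asserts. The strict inequality $M(x)<c$ in Step~3 would also follow without attainment, by running Step~2 with some $c'\in(M(x_0),c)$. Second, your argument goes through verbatim for $\phi$ taking values in $[-\infty,\infty)$. The paper actually needs this generality: it applies the minimum version (\Cref{cor:minimum}) to a KL-based objective that can equal $+\infty$.
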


We require the minimizing version of the theorem, which we state here for completeness.
\begin{corollary}[Minimum Theorem]\label{cor:minimum}
        If $\phi:X\times Y\to \reals$ is a lower semi-continuous function and $\Gamma:X\to 2^Y$ is an upper semi-continuous, compact-valued correspondence such that, for each $x, \Gamma(x) \neq \varnothing$, then $M:X\to \reals $ defined as:
    $$
    M(x) \defas \min\{\phi(x,y): y\in \Gamma(x)\},
    $$
    is lower semi-continuous.
\end{corollary}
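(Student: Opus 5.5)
The plan is to derive the Minimum Theorem directly from the Maximum Theorem stated just above, by negation. Given $\phi$ lower semi-continuous on $X\times Y$, I will define $\psi \defas -\phi$. The first step is to check that $\psi$ is upper semi-continuous, using the definitions recalled earlier. Fix $(x_0,y_0)$ and any $z$ with $\psi(x_0,y_0) < z$. Then $\phi(x_0,y_0) > -z$. Lower semi-continuity of $\phi$ gives a neighbourhood $U$ on which $\phi > -z$, that is, $\psi < z$, which is exactly upper semi-continuity of $\psi$ at $(x_0,y_0)$.

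Next I will note that the correspondence $\Gamma$ is untouched: it is still upper semi-continuous, compact-valued and nonempty-valued. So the Maximum Theorem applies to the pair $(\psi,\Gamma)$ and shows that
\[
\widetilde M(x) \defas \max\{\psi(x,y) : y\in\Gamma(x)\}
\]
is upper semi-continuous on $X$. The one point needing care is that the $\min$ defining $M(x)$ is actually attained. This holds because a lower semi-continuous function attains its minimum on the nonempty compact set $\Gamma(x)$ (take a minimizing net and use compactness plus lower semi-continuity). It is also implicit in the Maximum Theorem's use of $\max$. With attainment in hand, the identity $\min_{y\in\Gamma(x)} \phi(x,y) = -\max_{y\in\Gamma(x)} (-\phi(x,y))$ gives $M = -\widetilde M$.

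The final step is to transfer semi-continuity back through the sign flip. Fix $x_0$ and $z$ with $M(x_0) > z$. Then $\widetilde M(x_0) < -z$, so upper semi-continuity of $\widetilde M$ gives a neighbourhood $U$ of $x_0$ with $\widetilde M < -z$ on $U$. Equivalently $M > z$ on $U$, so $M$ is lower semi-continuous.

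None of these steps is a real obstacle. The only subtlety is the range $\reals\cup\{\pm\infty\}$, where negation must be read with the convention $-(+\infty) = -\infty$. This preserves strict inequalities, so each equivalence above remains valid.
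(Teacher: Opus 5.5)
Your proposal is correct and follows the paper's proof: negate $\phi$, apply the Maximum Theorem to $(-\phi,\Gamma)$, and use $\min_{y\in\Gamma(x)}\phi(x,y) = -\max_{y\in\Gamma(x)}(-\phi(x,y))$ to turn upper semi-continuity of the maximum into lower semi-continuity of $M$. The only difference is that you write out the sign-flip checks and the attainment of the minimum, which the paper leaves implicit.
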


\begin{proof}
    Observe that if $\phi$ is lower semi-continuous, then $-\phi$ is upper semi-continuous. Applying Maximum theorem, we have that $M'(x) = \max\{-\phi(x,y): y\in \Gamma(x)\} = -\min{\phi(x, y):y\in \Gamma(x)}$ is upper semi-continuous, thus, $M(x) =\min\{\phi(x,y): y\in \Gamma(x)\}$ is lower semi-continuous.
\end{proof}

\subsubsection{Results}

Let $\cM$ be the set of $m\times m$ stochastic matrices. We show the following lemmas before showing the requisite lower semi-continuity of $C(\epsilon)$.

\begin{lemma}[Compactness of $\cM$]\label{lem:compactness_M}
    Let $\cM \defas \{A\in \reals^{m\times m}: \text{A is stochastic i.e.} \sum_{j\in[m]}A(i, j) = 1 \ \forall i\in[m], A(i, j) \geq 0\  \forall i, j\}$. $\cM \subset\reals^{n\times n}$ is compact under the standard topology. 
\end{lemma}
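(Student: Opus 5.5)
We will prove compactness through the Heine--Borel theorem. Since $\reals^{m\times m}$ with the standard topology is a finite-dimensional Euclidean space, a subset is compact if and only if it is closed and bounded. All norms on $\reals^{m\times m}$ are equivalent, so it suffices to check both properties in any convenient norm. We will use the entrywise max-norm, or equivalently the $\norm{\cdot}_{1,\infty}$ norm introduced in \Cref{sec:preliminaries}.

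\emph{Boundedness.} For any $A\in\cM$, every entry satisfies $0\le A(i,j)\le \sum_{k}A(i,k)=1$. Hence $\cM\subseteq[0,1]^{m\times m}$, and in particular $\norm{A}_{1,\infty}=1$ for all $A\in\cM$. This gives boundedness.

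\emph{Closedness.} We will write $\cM$ as a finite intersection of closed sets:
\[
\cM=\bigcap_{i,j\in[m]}\{A: A(i,j)\ge 0\}\;\cap\;\bigcap_{i\in[m]}\Big\{A:\textstyle\sum_{j}A(i,j)=1\Big\}.
\]
Each coordinate map $A\mapsto A(i,j)$ is a continuous linear functional, and so is each row-sum map $A\mapsto\sum_j A(i,j)$. Therefore each set in the intersection is the preimage of a closed set ($[0,\infty)$ or $\{1\}$) under a continuous map, and is thus closed. A finite intersection of closed sets is closed.

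As an alternative, more elementary route, take a sequence $A_n\in\cM$ converging to some $A$. Then $A$ is entrywise nonnegative and has unit row sums, because limits preserve weak inequalities and finite sums. So $A\in\cM$.

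Combining closedness and boundedness, Heine--Borel yields compactness. (The statement writes $\reals^{n\times n}$; we read this as $\reals^{m\times m}$.) There is no real obstacle here; the only point requiring care is the appeal to equivalence of norms, which makes the choice of norm irrelevant to the topology.
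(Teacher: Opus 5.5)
Your proof is correct and follows the paper's argument: boundedness from $\|A\|_{1,\infty}=1$, closedness via limits of sequences preserving nonnegativity and unit row sums (your ``alternative route'' is exactly the paper's), and Heine--Borel in the finite-dimensional space. Your preimage-of-closed-sets formulation of closedness is simply an equivalent way of stating the same step.
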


\begin{proof}
    We will show that $\cM$ is closed and bounded in the topology generated by $\norm{\cdot}_{1, \infty}$. Observe that for each $P\in \cM, \norm{P}_{1, \infty} = 1$, thus the set is bounded. Consider $\{P_n\} \to P^*$. Fix any row $i$ and observe:
    $$
    \sum_{j\in[m]}P^*(i, j) = \sum_{j\in[m]}\lim_{n\to\infty}P_n(i, j) = \lim_{n\to\infty}\sum_{j\in[m]} P_n(i, j) = 1.
    $$
    Further, observe that $P^n(i, j) \geq 0$ for each entry thus, $P^*(i, j) \geq 0$.
    Thus, since this set is a closed and bounded subset of a finite dimensional normed space, i.e. $\reals^{m\times m}$, we have compactness by \citet[Theorem 2.5-3]{kreyszig1978introductory}.
\end{proof}

\begin{lemma}[Upper semi-continuity of correspondence]\label{lem:correspondence_usc}
    The correspondence $\Gamma(\epsilon)\defas \{Q'\in \cM: \norm{Q'-Q}_{1, \infty} \leq \epsilon\}$ is upper semi-continuous and compact valued.
\end{lemma}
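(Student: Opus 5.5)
We need to show two things about $\Gamma(\epsilon)=\{Q'\in\cM:\norm{Q'-Q}_{1,\infty}\le\epsilon\}$, viewed as a correspondence on $\epsilon\in[0,\infty)$: it is compact valued, and it is upper semi-continuous. It is also nonempty, since $Q\in\Gamma(\epsilon)$.

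\textbf{Compact values.} For each $\epsilon$, $\Gamma(\epsilon)$ is the intersection of $\cM$ with the closed $\norm{\cdot}_{1,\infty}$-ball of radius $\epsilon$ around $Q$. The set $\cM$ is compact by \Cref{lem:compactness_M}. The ball is closed because $Q'\mapsto\norm{Q'-Q}_{1,\infty}$ is continuous. A closed subset of a compact set is compact, so $\Gamma(\epsilon)$ is compact.

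\textbf{Upper semi-continuity.} I would prove this through the closed-graph characterization, which gives upper semi-continuity when the values lie in a fixed compact set. The argument below proves this directly.

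1. Fix $\epsilon_0\ge0$ and an open $G\supseteq\Gamma(\epsilon_0)$. Suppose no neighbourhood of $\epsilon_0$ works.
2. Then there are $\epsilon_n\to\epsilon_0$ and $Q_n\in\Gamma(\epsilon_n)\setminus G$.
3. By compactness of $\cM$, pass to a subsequence with $Q_n\to Q^\ast\in\cM$.
4. The complement of $G$ is closed, so $Q^\ast\notin G$.
5. Continuity of the norm gives $\norm{Q^\ast-Q}_{1,\infty}=\lim_n\norm{Q_n-Q}_{1,\infty}\le\lim_n\epsilon_n=\epsilon_0$. Hence $Q^\ast\in\Gamma(\epsilon_0)\subseteq G$, which contradicts step 4.

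The contradiction shows that some neighbourhood $U(\epsilon_0)$ satisfies $\Gamma(\epsilon)\subseteq G$ for all $\epsilon\in U(\epsilon_0)$.

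The only delicate point is step 3. Extracting a convergent subsequence from the sequence $Q_n$ needs the ambient compactness of $\cM$ from \Cref{lem:compactness_M}. The limiting inequality in step 5 is routine. Together these facts supply exactly the hypotheses of \Cref{cor:minimum} for $\Gamma$.
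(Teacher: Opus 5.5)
Your proof is correct and follows essentially the same route as the paper. Compactness of each $\Gamma(\epsilon)$ comes from its being a closed subset of the compact $\cM$ (the paper says closed and bounded in finite dimensions). Upper semi-continuity is proved by contradiction: take $Q_n\in\Gamma(\epsilon_n)\setminus G$, extract a convergent subsequence using compactness of $\cM$, and pass to the limit to get $\norm{Q^\ast-Q}_{1,\infty}\le\epsilon_0$. Your domain $[0,\infty)$ includes $\epsilon_0=0$, which suits the later use in \Cref{lem:lsc_c_eps} slightly better than the paper's $\reals_{++}$, since that lemma needs lower semi-continuity of $C$ at $\epsilon=0$.
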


\begin{proof}

    Consider $\Gamma: \reals_{++} \to 2^\cM$. Let $\epsilon_0>0$ then, $ \ \Gamma(\epsilon_0) = \{Q'\in \cM: \sup_{i\in [m]} \norm{Q'(i, \cdot)-Q(i, \cdot)}_1 \leq \epsilon_0\}$. Compactness of $\Gamma(\epsilon_0)$ follows by observing it is a closed and bounded subset of a finite dimensional normed vector space \citep[Theorem 2.5-3]{kreyszig1978introductory}.
    
    To show upper semi-continuity of $\Gamma$, we demonstrate upper semi-continuity at any $\epsilon_0 \in \reals_{++}$. Let $G$ be an arbitrary open set in $\mathcal{M}$ such that $\Gamma(\epsilon_0) \subseteq G$. We must find a neighborhood $(\epsilon_0 - \delta, \epsilon_0 + \delta)$ such that for all $\epsilon$ in this neighborhood, $\Gamma(\epsilon) \subseteq G$.
    
    We proceed by contradiction. Suppose no such $\delta$ exists. Then, for every $n \in \mathbb{N}$, we can find an $\epsilon_n$ such that $|\epsilon_n - \epsilon_0| < 1/n$ and yet $\Gamma(\epsilon_n) \not\subseteq G$. This implies that for each $n$, there exists a matrix $Q'_n \in \Gamma(\epsilon_n)$ such that $Q'_n \notin G$. Consider the sequence $(Q'_n)_{n \in \mathbb{N}}$. 
    
    By definition of $\Gamma(\epsilon_n)$, we have $\|Q'_n - Q\|_{1, \infty} \leq \epsilon_n$. Since the space of stochastic matrices $\mathcal{M}$ is compact (\Cref{lem:compactness_M}), the sequence $(Q'_n)$ must have a convergent subsequence $(Q'_{n_k})$ that converges to some limit point $Q^* \in \mathcal{M}$.
    
    Taking the limit of the inequality along this subsequence:
    
    $$\|Q^* - Q\|_{1, \infty} \leq \lim_{k \to \infty} \|Q'_{n_k} - Q\|_{1, \infty} \le \lim_{k \to \infty} \epsilon_{n_k} = \epsilon_0,$$

    where we apply the triangle inequality to $\norm{Q^* - Q'_{n_k} +Q'_{n_k}-Q}_{1, \infty}$ and then take the limit as $k\to \infty$.
    Therefore, $Q^* \in \Gamma(\epsilon_0)$. Since $\Gamma(\epsilon_0) \subseteq G$, it must be that $Q^* \in G$. However, $G$ is an open set. Since the subsequence converges to $Q^*$ (which is inside $G$), the tail of the subsequence must eventually enter $G$. This contradicts the assumption that $Q'_n \notin G$ for all $n$. Thus, our assumption was false. A $\delta$ must exist, proving that $\Gamma$ is upper semi-continuous.

\end{proof}

\begin{lemma}[Joint lower semi-continuity]
    Let $\phi:\reals_{++}\times \cM \times \cP \to \reals\bigcup \{+\infty\}$ be given as $$\phi(\epsilon, Q', P)\defas  \sum_{x\in [m]} (\pi_x - \epsilon) \kl{Q'_x}{P_x}.$$
    This function is jointly lower semi-continuous in its arguments for $\epsilon < \pi_\ast$.
\end{lemma}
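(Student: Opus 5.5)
The plan is to reduce joint lower semi-continuity of $\phi$ to two standard facts: the row-wise KL divergence is jointly lower semi-continuous, and a finite sum of lower semi-continuous functions with nonnegative continuous weights is lower semi-continuous. Write
\[
\phi(\epsilon, Q', P) = \sum_{x\in[m]} w_x(\epsilon)\, g_x(Q',P), \qquad w_x(\epsilon) \defas \pi_x - \epsilon,\quad g_x(Q',P)\defas \kl{Q'_x}{P_x}.
\]
On the domain $\epsilon < \pi_\ast$, each weight $w_x(\epsilon)$ is continuous in $\epsilon$ and strictly positive, since $\pi_x \geq \pi_\ast > \epsilon$. The projections $(Q',P)\mapsto (Q'_x, P_x)\in\Delta_m\times\Delta_m$ are continuous.

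\textbf{Step 1: lower semi-continuity of KL on $\Delta_m\times\Delta_m$, with values in $[0,+\infty]$.} I will use the Donsker--Varadhan/Legendre variational representation
\[
\kl{q}{p} = \sup_{h:[m]\to\reals} \Big(\textstyle\sum_j q_j h_j - \log \sum_j p_j e^{h_j}\Big),
\]
which holds with the value $+\infty$ when $q\not\ll p$. For fixed $h$, each map inside the supremum is jointly continuous in $(q,p)$, because $\sum_j p_j e^{h_j} >0$ on the simplex. A pointwise supremum of continuous functions is lower semi-continuous. As a fallback, one can argue entrywise: $q\log(q/p)$, with the conventions $0\log 0 = 0$ and $q\log(q/0)=+\infty$ for $q>0$, is lsc on $[0,1]^2$, and a finite sum of lsc functions bounded below is lsc.

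\textbf{Step 2: lsc of the products and the sum.} Take a sequence $(\epsilon_n,Q'_n,P_n)\to(\epsilon,Q',P)$ with $\epsilon<\pi_\ast$, so that $w_x(\epsilon_n)\to w_x(\epsilon)>0$. If $g_x(Q',P)<\infty$, then
\[
\liminf_n w_x(\epsilon_n) g_x(Q'_n,P_n) \geq w_x(\epsilon)\liminf_n g_x(Q'_n,P_n) \geq w_x(\epsilon) g_x(Q',P).
\]
This uses positivity of the limit weight together with $g\geq 0$. If $g_x(Q',P)=+\infty$, then $g_x(Q'_n,P_n)\to\infty$ and the weights stay bounded below by a positive constant, so the product tends to $+\infty$. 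Summing over $x$, the terms are nonnegative and so no $\infty-\infty$ issue arises; the superadditivity of $\liminf$ then gives $\liminf_n \phi(\epsilon_n,Q'_n,P_n)\geq \phi(\epsilon,Q',P)$. The spaces involved are metric, so sequential lsc is equivalent to lsc.

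\textbf{Main obstacle.} The delicate points are the extended values. The first is the case $Q'_x\not\ll P_x$, where KL equals $+\infty$ and lsc must be checked with the convention above. The second is the need for $\epsilon<\pi_\ast$: without it a weight could be zero or negative, and a negative weight times an lsc function would be upper rather than lower semi-continuous. Step 1, handled via the variational formula, is the crux.
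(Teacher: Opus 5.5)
Your proposal is correct and takes essentially the paper's route. You write $\phi$ as a sum of weights $\pi_x-\epsilon$, which are continuous and strictly positive for $\epsilon<\pi_\ast$, times row-wise KL divergences; you invoke lower semi-continuity of KL; and you conclude because positive continuous multiples and finite sums of nonnegative lower semi-continuous functions are lower semi-continuous. The differences are minor: you prove lower semi-continuity of KL yourself via the Donsker--Varadhan formula and check the $+\infty$ cases explicitly, whereas the paper cites Posner (Polyanskiy--Wu, Thm.~4.9) for the weak topology and transfers the result to the $\ell_{1,\infty}$ topology by finite-dimensionality.
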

\begin{proof}
   Observe that convergence in the $\ell_{1, \infty}$ norm implies convergence in total variation for each row probability vector. Further, recall that convergence in total variation implies weak convergence. Thus, $Q_n\overset{\ell_{1, \infty}}{\to}Q \implies Q_n(x, \cdot) \overset{w}{\to} Q(x, \cdot) \ \forall x \in [m]$.
   
   First, it is well known that KL divergence $(p,q) \mapsto {\rm KL}(p,q)$ is lower-semi-continuous in the topology of weak convergence i.e. for $p_n\overset{w}{\to} p, q_n\overset{w}{\to}q$, we have $\lim\inf_{n\to \infty} {\rm KL}(p_n, q_n) \geq {\rm KL}(p,q)$ (see Theorem 4.9 in \cite{Polyanskiy_Wu_2025}, originally shown by \cite{posner1975}). Since we're in finite dimensions, lower semi-continuity in the topology of weak convergence is equivalent to lower semi-continuity in the topology induced by the norm $\ell_{1, \infty}$ \citep[Theorem 4.8-4, part c]{kreyszig1978introductory}.

   Second, consider the weight map $w_x(\epsilon)\defas \pi_x-\epsilon $ is continuous and positive for $\epsilon \leq \pi^* \defas \min_{x\in [m]}\pi_x$ (recall that $\pi^*$ is positive by Perron Frobenius theorem). Since the product of a positive continuous function and a non-negative lower semi-continuous function is lower semi-continuous, each term $(\pi_x - \epsilon) \kl(Q'_x \| P_x)$ is jointly lower semi-continuous.

   Finally, since the sum of lower semi-continuous functions is lower semi-continuous, we have the result.
    
\end{proof}

\begin{lemma}\label{lem:lsc_c_eps}
    $C(\epsilon) \defas \inf_{Q': \norm{Q'-Q}_{1, \infty} \leq \epsilon} \inf_{P\in\cP}  \sum_{x\in [m]} (\pi_x - \epsilon) \kl{Q'_x}{P_x}$ is lower semi-continuous. 
\end{lemma}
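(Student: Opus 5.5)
The plan is to apply the Minimum Theorem (\Cref{cor:minimum}) twice, once for each of the two nested infima in $C(\epsilon)$. We work with $\epsilon$ in a domain $(0,\pi_\ast)$, where the weights $\pi_x-\epsilon$ are positive. At $\epsilon=0$ we set $C(0)=D_{\cM}^{\inf}(Q,\cP)$, with $\Gamma(0)=\{Q\}$.

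\textbf{Step 1: merge the infima.} Rewrite $C(\epsilon)=\inf_{(Q',P)\in\Gamma(\epsilon)\times\cP}\phi(\epsilon,Q',P)$. Here $\Gamma$ is the correspondence of \Cref{lem:correspondence_usc}, and $\phi$ is the function of the joint lower semi-continuity lemma. The hypotheses of \Cref{cor:minimum} are then checked one by one.
\begin{itemize}
\item \emph{Nonempty, compact-valued correspondence.} The set $\tilde\Gamma(\epsilon)\defas\Gamma(\epsilon)\times\cP$ is nonempty, since $Q\in\Gamma(\epsilon)$. It is compact because $\Gamma(\epsilon)$ is compact by \Cref{lem:correspondence_usc} and $\cP$ is compact (the standing assumption of \Cref{sec:algo_optimality}).
\item \emph{Upper semi-continuity.} $\tilde\Gamma$ is upper semi-continuous because a product of an u.s.c.\ compact-valued correspondence with a constant compact set is u.s.c. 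I would prove this directly by the same sequential contradiction argument as in \Cref{lem:correspondence_usc}. Suppose $(Q'_n,P_n)\notin G$ with $\epsilon_n\to\epsilon_0$. Extract convergent subsequences using compactness of $\cM\times\cP$. The limit lies in $\Gamma(\epsilon_0)\times\cP\subseteq G$, and openness of $G$ gives the contradiction.
\item \emph{Lower semi-continuity of $\phi$.} This is exactly the joint lower semi-continuity lemma.
\end{itemize}

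\textbf{Step 2: the infimum is attained.} \Cref{cor:minimum} is phrased with $\min$, so we need $\phi(\epsilon,\cdot,\cdot)$ to attain its infimum on $\tilde\Gamma(\epsilon)$. This holds because it is lower semi-continuous on a compact set, even though it may take the value $+\infty$. Hence $C(\epsilon)=M(\epsilon)$, and $C$ is lower semi-continuous on $(0,\pi_\ast)$.

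\textbf{Step 3: the endpoint $\epsilon=0$.} This is the step actually used in \Cref{lem:lower_semicontinuity}, and I expect it to be the main obstacle. \Cref{lem:correspondence_usc} is stated only on $\reals_{++}$, and the Maximum Theorem as quoted takes $\phi$ real-valued rather than extended-valued. I would avoid both issues with a direct sequential argument. Take $\epsilon_n\downarrow0$ along which $C(\epsilon_n)\to\liminf_{\epsilon\downarrow0}C(\epsilon)$, and assume this limit is finite (otherwise there is nothing to prove). Pick minimizers $(Q'_n,P_n)$, which exist by Step 2. Pass to a subsequence with $Q'_n\to Q^\ast$ and $P_n\to P^\ast\in\cP$. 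Since $\|Q'_n-Q\|_{1,\infty}\le\epsilon_n\to0$, we get $Q^\ast=Q$. Joint lower semi-continuity of $\phi$, which holds at $\epsilon=0$ as well since the weights $\pi_x-\epsilon$ remain positive and continuous there, then gives
\[
\liminf_n C(\epsilon_n)=\liminf_n\phi(\epsilon_n,Q'_n,P_n)\ge\phi(0,Q,P^\ast)\ge D_{\cM}^{\inf}(Q,\cP).
\]
This is precisely lower semi-continuity at $0$. The same argument run at any $\epsilon_0\in(0,\pi_\ast)$ reproves Step 1 without invoking the Maximum Theorem, so I would present this sequential argument as the main proof and keep the Minimum-Theorem route as a remark.
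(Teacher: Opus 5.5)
Your proof is correct, but despite your opening sentence it does not follow the paper's route. The paper applies \Cref{cor:minimum} twice, in nested fashion:
\begin{itemize}
\item first with the constant correspondence $\cP$, to get that $(\epsilon,Q')\mapsto\min_{P\in\cP}\phi(\epsilon,Q',P)$ is lower semi-continuous;
\item then with the correspondence $\Gamma$ of \Cref{lem:correspondence_usc}, to pass to $C$.
\end{itemize}
You instead merge the two infima into one over the product correspondence $\Gamma(\epsilon)\times\cP$, so a single application suffices once upper semi-continuity of the product is checked. You then replace the Berge machinery altogether by a direct sequential-compactness argument: pick minimizers, extract a convergent subsequence using compactness of $\cM$ and $\cP$, and invoke joint lower semi-continuity of $\phi$ at the limit point.

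The paper's version is shorter, but it silently relies on three things that your version makes explicit:
\begin{itemize}
\item $C$, defined with $\inf$, coincides with the $\min$ appearing in \Cref{cor:minimum}; this is your Step 2.
\item The Minimum Theorem applies to the extended-valued $\phi$, which equals $+\infty$ whenever $Q'_x\not\ll P_x$ for some $x$, although the theorem is quoted only for real-valued $\phi$.
\item Most importantly, lower semi-continuity holds at $\epsilon=0$. This is the only point actually used in \Cref{lem:lower_semicontinuity}, yet it lies outside the domain $\reals_{++}$ on which \Cref{lem:correspondence_usc} is stated.
\end{itemize}
Your Step 3 closes exactly this last gap. Restricting to $\epsilon<\pi_\ast$ is also the right domain, matching the hypothesis of the paper's joint lower semi-continuity lemma.

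Presenting the sequential argument as the main proof, as you propose, gives a self-contained proof that handles every $\epsilon_0\in[0,\pi_\ast)$ uniformly. As a small bonus, $C$ is nonincreasing and $C(\epsilon)\le C(0)$. Your Step 3 therefore actually yields $\lim_{\epsilon\downarrow 0}C(\epsilon)=D_{\cM}^{\inf}(Q,\cP)$.
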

\begin{proof}
    We proceed by applying the Minimum theorem twice. 
    
    First, consider the lower semi-continuous map $\phi(\epsilon, Q', P)\defas  \sum_{x\in [m]} (\pi_x - \epsilon) \kl{Q'_x}{P_x}$ with the constant correspondence given by $\Gamma_1(P) \mapsto \cP \ \forall P \in \cP$. Clearly, this is compact valued (since $\cP$ is compact) and continuous. Therefore, from \Cref{cor:minimum}, we have that $\phi'(\epsilon, Q') \defas \min_{P\in\cP} \phi(\epsilon, Q', P)$ is lower semi-continuous.

    Secondly, consider this lower semi-continuous map and the compact valued, upper semi-continuous correspondence $\epsilon\mapsto \{Q'\in \cM: \norm{Q'-Q}_{1, \infty} \leq \epsilon\}$ (see \Cref{lem:correspondence_usc}). Applying \Cref{cor:minimum}, we have that $C(\epsilon)$ is lower semi-continuous.

\end{proof}

\subsection{Proof of Optimality}

\begin{proof}[Proof of \Cref{thm:optimality}]
\phantom{x}\\
To show $\alpha$-correctness, we use the mixture martingale introduced in \Cref{sec:mixture_martingale}.
\begin{align*}
    \probunder{P}{\tau_\alpha < \infty} &= \probunder{P}{\exists t, \inf_{P'\in\cP} \sum_{x\in [m]}N_x(t)D(\widehat{P}(t)(x, \cdot), P'(x, \cdot)) \geq \beta(t, \alpha)} \\
    & \leq \probunder{P}{\exists t, \sum_{x\in [m]}N_x(t)D(\widehat{P}(t)(x, \cdot), P(x, \cdot)) \geq \beta(t, \alpha)} \\
    &\leq \alpha \tag{using \cref{lem:alpha_correctness}}.
\end{align*}

From \Cref{lem:usable_form} and \Cref{lem:freq_vector}, we have that for any $t\geq 1$:
\begin{align}
    \prob{\cE_\epsilon^\complement(t)} &\leq \sqrt{2\Pi_\mu}\cdot \exp\left(-\frac{\epsilon^2t^2\gamma_{\rm ps}}{4(t+1/\gamma_{\rm ps})+40\epsilon t}\right) + 2m^2\exp{\left(-\frac{\epsilon^2t\pi_{\ast}}{16m}\right)} \nonumber\\ &\hspace{2cm}+m\sqrt{\Pi_\mu} \exp \left( - \frac{\gamma_{\rm ps}(0.5t\pi_{\ast})^2}{14t+4/\gamma_{\rm ps}}\right),
\end{align}

which is summable across time, i.e. $\sum_{t=1}^\infty \prob{\cE_\epsilon^\complement(t)} < \infty$.
    
From \Cref{lem:usable_form}, we have that $\norm{\widehat{Q}_t - Q}_{1, \infty} \to 0$.

    Pick $\epsilon>0, \epsilon < \pi^*$, where $\pi^* \defas \min_{j\in[m]}\pi_j$. Define $A_t$ and $\cE_\epsilon$ as in the previous sections. 

    \begin{align*}
        \expecover{Q}{\tau_\alpha} &= \sum_{t=1}^\infty \prob{\tau_\alpha \geq t}\\
        &\leq \sum_{t=1}^\infty \prob{L_t < \beta(t, \alpha)} \\
        &= \sum_{t=1}^\infty \prob{A_t} \\
        &= \sum_{t=1}^\infty \prob{A_t \cap \cE_\epsilon(t)} + \sum_{t=1}^\infty \prob{\cE_\epsilon^\complement(t)} \\
        &\leq \sum_{t=1}^\infty \prob{t \leq \frac{2C}{C(\epsilon)} \log\left(\frac{\log(1/\alpha) + D}{C(\epsilon)}\right) + \frac{\log(1/\alpha) + D}{C(\epsilon)}} + \sum_{t=1}^\infty \prob{\cE_\epsilon^\complement}  \\
        &\leq \frac{2C}{C(\epsilon)} \log\left(\frac{\log(1/\alpha) + D}{C(\epsilon)}\right) + \frac{\log(1/\alpha) + D}{C(\epsilon)}  + \sum_{t=1}^\infty \prob{\cE_\epsilon^\complement(t)}.
    \end{align*}

    Dividing by $\log(1/\alpha)$ and taking the limit as $\alpha\to 0$, we have
    \begin{equation*}
         \lim_{\alpha\to 0} \frac{\expecover{Q}{\tau_\alpha}}{\log(1/\alpha)} = \frac{1}{C(\epsilon)}.
    \end{equation*}

    Letting $\epsilon \to 0$ and using the lower semi-continuity of $C(\epsilon)$, shown in \Cref{lem:lower_semicontinuity}, we have the result.
    
\end{proof}

\section{Extension to Two-Sided Sequential Testing}\label{sec:two_sided}
In this section, we provide a proof of \Cref{thm:two_sided_test}, stated in \Cref{sec:two_sided_main}.

\begin{proof}[Proof of \Cref{thm:two_sided_test}]

\phantom{x}\\
\textbf{Lower Bound:} We remark that the lower bound follows very similarly to the one-sided case, with the only change coming in the event chosen for the data processing inequality. Suppose the alternate hypothesis is true and the data is generated by some $Q\in \cQ$ and we fix some $P\in \cP$ such that $Q\ll P$. Following the proof of \Cref{thm:lower_bound}, we have
    \begin{equation}
        \expecover{Q}{\tau_{\alpha, \beta}}D_{\cM}(Q, P) +C_\tau^Q \geq d(Q(\cE), P(\cE)) \ \forall \  \cE \in \cF_\tau,
    \end{equation}
    where $C_\tau^Q = \mathbb{E}_Q[\omega(X_0) -\omega(X_{\tau_{\alpha, \beta}})]$.

    Choosing $\cE = \{D(\tau_{\alpha, \beta}) = 0\}$. Then, by the $(\alpha, \beta)$ correctness, we must have: $P(\cE) \geq 1-\alpha, Q(\cE) \leq \beta$ (observe that this implies $P(\cE) \geq 0.5 \geq Q(\cE)$). From the monotonicity properties of the Bernoulli KL divergence, we have 

    \begin{equation*}
        d(Q(\cE), P(\cE)) \geq d(Q(\cE), 1-\alpha)
        \geq d(\beta, 1-\alpha).
    \end{equation*}

    Combining the equations we have the result. Following the proof of \Cref{thm:lower_bound}, we bound the excess Poisson terms using \Cref{lem:control_solution_poisson} to allow us to safely maximize over $P$.

    A similar proof follows for the case where the null is true.

\textbf{Construction of two-sided test:}
    The basic idea for the construction is to run two one-sided tests in parallel: one testing $\cP \text{ vs } \cQ$ and the other testing $\cQ \text{ vs } \cP$. We can choose the parameters such that the required error levels are met.

    Let $\tau^\cP_{\alpha}$ be an $\alpha$-correct, power-one test for testing $\cP$ against $\cQ$. By definition, we have
    \begin{equation}
        \probunder{P}{\tau^\cP_{\alpha} < \infty} \leq \alpha, \probunder{Q}{\tau^\cP_{\alpha} < \infty} = 1, \quad \forall\  P\in \cP, Q\in \cQ.
    \end{equation}

    Now, let $\tau^\cQ_{\beta}$ be a level $\beta$, power-one test for testing $\cQ$ against $\cP$. By definition, we have
    \begin{equation}
        \probunder{Q}{\tau^\cQ_{\beta} < \infty} \leq \beta, \probunder{P}{\tau^\cQ_{\beta}< \infty} = 1 \quad \forall\  P\in \cP, Q\in \cQ.
    \end{equation}

    Define the parallel test: $\tau_{\alpha, \beta} = \min \{\tau^\cP_{\alpha}, \tau^\cQ_{\beta}\}$ with the decision rule, $D(\tau_{\alpha,\beta}) = 1$ if $\tau^\cP_{\alpha} \le \tau^\cQ_{\beta}$, and $D(\tau_{\alpha, \beta}) = 0$ otherwise (since $\tau^\cP_{\alpha} < \infty $ corresponds to choosing $H_1$ and vice-versa). We show that this test satisfies the correctness properties. Let $P\in \cP$, then:

    \begin{equation*}
        \probunder{P}{D(\tau_{\alpha, \beta}) = 1} = \probunder{P}{\tau^\cP_{\alpha} \leq \tau^{\cQ}_{\beta}} \leq \probunder{P}{\tau^\cP_{\alpha}<\infty} \leq \alpha.
    \end{equation*}

    Similarly, for any $Q\in \cQ$, $\probunder{Q}{D(\tau_{\alpha, \beta}) = 0} \leq \beta$.

We remark that this construction yields asymptotically optimal tests in the i.i.d. setting as well, as shown by \citet[Theorem 1]{lorden1976sprt}. We show that the same holds for Markovian data.

\textbf{Asymptotic optimality:} Now, we prove the asymptotic optimality of $\tau_{\alpha,\beta}$ described above as $\alpha, \beta\to 0$.

    Suppose the data is generated by some $Q\in \cP$, the lower bound states that:
    \begin{equation*}
    \expecover{Q}{\tau_{\beta, \alpha}} \geq \left(\frac{d(\beta, 1-\alpha)}{D_{\cM}^{\inf}(Q,\cP)} -\frac{2C_Q}{\pi_Q^*}\right)^+  . \end{equation*}
    Dividing by $\log(1/\alpha)$ on either side and taking the limit $\alpha, \beta\to 0$, we obtain:
    \begin{equation}\label{eq:lower_bound_asymptotic_two_sided}
        \liminf_{\alpha, \beta\to 0}\frac{\expecover{Q}{\tau_{\alpha, \beta}}}{\log(1/\alpha)} \geq \lim_{\alpha, \beta\to 0} \frac{d(\beta,1-\alpha)}{\log(1/\alpha)D_{\cM}^{\inf}(Q,\cP)} = \frac{1}{D_{\cM}^{\inf}(Q,\cP)},
    \end{equation}
    where the equality follows from \Cref{lem:technical_kl_zero}.

    Now, we consider the test defined above, i.e.  $\tau_{\alpha, \beta} = \min \{\tau^\cP_{\alpha}, \tau^\cQ_{\beta}\}$, where $\tau^\cP_{\alpha}$ is the $\alpha-$correct, power-one test for testing compact $\cP$ against $\cQ$ given by \Cref{alg:sequential_test}. Similarly, $\tau^\cQ_{\beta}$ is the $\alpha-$correct, power-one test for testing compact $\cQ$ against $\cP$ given by \Cref{alg:sequential_test}.
    
    $$\expecover{Q}{\tau_{\alpha, \beta}} = \expecover{Q}{\min \{\tau^\cP_{\alpha}, \tau^\cQ_{\beta}\}} \leq \expecover{Q}{\tau_{\alpha}^\cP}.$$

    We divide by $\log(1/\alpha)$ take the limit as $\alpha, \beta\to 0$. Optimality follows by comparing this with~\eqref{eq:lower_bound_asymptotic_two_sided}. A similar proof holds when the data is generated by some ergodic $P\in \cP$.
\end{proof}

\section{Computationally Tractable Lower Bound}\label{sec:computationally_tractable_pinsker}
\begin{proof}[Proof of \Cref{prop:stationary_expec_dm}]
    
Fix some $g:[m]\to \reals$ such that it is not constant across states. Let ergodic $P,Q \in \cM$ such that they have the stationary distributions $\pi_P, \pi_Q$. Further, suppose $\omega_{P, g}$ is a solution to the following PE:
\begin{equation}\label{eq:poisson_tractable}
    (I - P)\omega_P = g - \expecover{\pi_P}{g}.
\end{equation}
Recall that such a solution always exists for finite state, ergodic chains and since $g$ is not constant, $\omega \neq \mathbf{0}$.

Now, observe:
\begin{align*}
    \expecover{\pi_Q}{g} - \expecover{\pi_P}{g} &= \expecover{\pi_Q}{g-\expecover{\pi_P}{g}} \\
    &= \expecover{\pi_Q}{(I-P)\omega_{P,g}} \tag{from~\eqref{eq:poisson_tractable}} \\
    &= \expecover{\pi_Q}{\omega_{P, g}} - \expecover{\pi_Q}{P\omega_{P, g}} \\
    &= \expecover{\pi_Q}{Q\omega_{P,g}} - \expecover{\pi_Q}{P\omega_{P,g}} \tag{$\because \pi_Q \omega_{P,g} = \pi_Q Q\omega_{P,g}$}\\
    &= \expecover{\pi_Q}{(Q-P)\omega_{P,g}} \\
    &= \sum_{i\in[m]}\pi_Q(i)\left(\expecover{Q(x, \cdot)}{\omega_{P,g}} - \expecover{P(x, \cdot)}{\omega_{P,g}}\right).
\end{align*}

From the variational form of Total Variation distance \citep[Theorem 7.7, (a)]{Polyanskiy_Wu_2025}, for any $Q_x, P_x \in \Delta_m$:
\begin{equation*}
    TV(Q_x,P_x) = \frac{1}{2} \sup_{f:\norm{f}_{\infty} \leq 1} |\expecover{Q_x}{f} - \expecover{P_x}{f}|.
\end{equation*}
As a consequence, for any $f:[m] \to \reals$, we have:
\begin{equation*}
    TV(Q_x,P_x) \geq \frac{1}{2\norm{f}_{\infty}} |\expecover{Q_x}{f} - \expecover{P_x}{f}|.
\end{equation*}

Combining this with Pinkser's inequality, we have
\begin{equation*}
    \kl{Q_x}{P_x} \geq  \frac{1}{2\norm{f}_{\infty} ^2} (\expecover{Q_x}{f} - \expecover{P_x}{f})^2.
\end{equation*}

From our previous analysis, we have
\begin{align*}
    |\expecover{\pi_Q}{g} - \expecover{\pi_P}{g}| &\leq \sum_{i\in[m]}\pi_Q(i)\left|\expecover{Q(x, \cdot)}{\omega_{P,g}} - \expecover{P(x, \cdot)}{\omega_{P,g}}\right| \\
    & \leq \sum_{i\in[m]}\pi_Q(i) \sqrt{2\norm{\omega_{P,g}}_{\infty} ^2\kl{Q(i, \cdot)}{P(i,\cdot)}} \\
    &= \sqrt{2}\norm{\omega_{P,g}}_{\infty} \sum_{i\in [m]} \pi_Q(i) \sqrt{\kl{Q(i, \cdot)}{P(i,\cdot)}}  \\
    & \leq \sqrt{2}\norm{\omega_{P,g}}_{\infty} \sqrt{\sum_{i\in [m]} \pi_Q(i)  \kl{Q(i, \cdot)}{P(i,\cdot)}} \tag{Jensen's inequality}.
\end{align*}

Thus, we have
\begin{equation}
    D_{\cM}(Q,P) \geq \frac{1}{2\norm{\omega_{P,g}}_{\infty}^2}(\expecover{\pi_Q}{g} - \expecover{\pi_P}{g})^2.
\end{equation}
\end{proof}

\begin{proposition}\label{prop:closed_form_surrogate}
    For ergodic transition matrices $Q, P\in \cM$ with stationary distributions $\pi_Q, \pi_P$ respectively, the set of maximizers of the optimization problem:
    \begin{equation*}
        \sup_{g:[m]\to\reals}\;
\frac{\Big(\expecover{\pi_{Q}}{g}-\expecover{\pi_P}{g}\Big)^2}
{2\,\|\omega_{P,g}\|_\infty^2},
    \end{equation*}
    is the set $\{c\,g^\star + b\mathbf 1:\ c\neq 0,\ b\in\mathbb{R}\}$, where $g^* = (I-P)\omega^*$. The vector $\omega^* \in [-1,1]^m$ is defined coordinate-wise for a threshold $\mu^*$ as:
\begin{equation*}
\omega_i^* =
\begin{cases}
+1, &\quad a_i > \mu^* \pi_P(i)\\
-1, &\quad a_i < \mu^* \pi_P(i)\\
t_i, &\quad a_i = \mu^* \pi_P(i)
\end{cases}
\end{equation*}
where $\mu^* \in \mathbb{R}$ is chosen such that the probability masses strictly above and strictly below the threshold are bounded by $1/2$:
\begin{equation*}
\sum_{i : a_i > \mu^* \pi_P(i)} \pi_P(i) \leq \frac{1}{2} \quad \text{and} \quad \sum_{i : a_i < \mu^* \pi_P(i)} \pi_P(i) \leq \frac{1}{2},
\end{equation*}
and $t_i \in [-1,1]$ are chosen to satisfy the constraint $\langle \pi_P, \omega^* \rangle = 0$.
\end{proposition}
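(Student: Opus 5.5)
The plan is to reparametrize the optimization variable $g$ by its Poisson solution, so that the problem becomes a linear program over a box with one linear constraint, and then read off the maximizer from LP duality.

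\textbf{Step 1 (reparametrization).} For $g:[m]\to\reals$, take $\omega=\omega_{P,g}$ as in \eqref{eq:closedform_poisson}. Applying $\pi_P$ to the series and using $\pi_P P^n=\pi_P$ shows that $\pi_P\omega=0$. Conversely, every $\omega$ with $\pi_P\omega=0$ arises this way. Indeed, set $g\defas(I-P)\omega$. Then $\pi_P g=0$, so the PE for $(P,g)$ reads $(I-P)\omega'=g$. Its solutions form $\omega+\reals\mathbf 1$ by ergodicity, and the one with $\pi_P\omega'=0$ is $\omega$ itself. Moreover, $g$ and $g+b\mathbf 1$ give the same $\omega_{P,\cdot}$ and the same numerator. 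Hence, modulo additive constants, $g\leftrightarrow\omega$ is a linear bijection onto the hyperplane $\{\omega:\pi_P\omega=0\}$.

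\textbf{Step 2 (numerator in terms of $\omega$).} Write $g=(I-P)\omega+b\mathbf 1$. Since $\pi_P(I-P)=0$,
\[
\expecover{\pi_Q}{g}-\expecover{\pi_P}{g}=\pi_Q(I-P)\omega=\langle a,\omega\rangle,\qquad a\defas \big(\pi_Q(I-P)\big)^\top .
\]
I take this $a$ to be the vector $a_i$ appearing in the statement. Note that $\langle a,\mathbf 1\rangle=0$.

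\textbf{Step 3 (reduction to an LP).} The objective becomes $\langle a,\omega\rangle^2/(2\|\omega\|_\infty^2)$. This is invariant under $\omega\mapsto c\omega$ for $c\neq0$, which corresponds to $g\mapsto cg$. So it suffices to solve
\[
\max\ \langle a,\omega\rangle\quad\text{s.t.}\quad \omega\in[-1,1]^m,\ \langle\pi_P,\omega\rangle=0 .
\]
The sign is absorbed by allowing $c<0$. Any maximizer of the original problem then corresponds to some $c\,\omega^\ast$, and hence to $g=c\,g^\ast+b\mathbf 1$.

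\textbf{Step 4 (duality).} Dualize the equality constraint with a multiplier $\mu$. For fixed $\mu$, maximizing $\langle a-\mu\pi_P,\omega\rangle$ over the box gives the value $\sum_i|a_i-\mu\pi_P(i)|$, attained at $\omega_i=\operatorname{sign}(a_i-\mu\pi_P(i))$. The dual problem is $\min_\mu\sum_i|a_i-\mu\pi_P(i)|$, a convex, piecewise-linear weighted-median problem.

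Its subgradient optimality condition is
\[
\sum_{i:a_i>\mu\pi_P(i)}\pi_P(i)\ \le\ \sum_{i:a_i<\mu\pi_P(i)}\pi_P(i)+\sum_{i:a_i=\mu\pi_P(i)}\pi_P(i),
\]
together with the symmetric inequality. These are exactly the mass conditions stated on $\mu^\ast$, once one uses $\sum_i\pi_P(i)=1$.

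Strong duality holds because the LP is feasible ($\omega=0$ is feasible) and bounded. Complementary slackness then forces $\omega_i=\pm1$ wherever $a_i\neq\mu^\ast\pi_P(i)$. On the tie set, the values $t_i\in[-1,1]$ are free subject to $\langle\pi_P,\omega\rangle=0$. Feasibility of such $t_i$ is exactly the median condition.

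\textbf{Main obstacle.} The delicate point is the characterization of the \emph{full} set of maximizers, rather than just exhibiting one. Three things need care. First, one must check that every primal optimum must satisfy complementary slackness with some dual optimum $\mu^\ast$. Second, the dual optimum may be an interval of $\mu$ values, and each must give the same sign pattern off the ties. Third, the degenerate case $a=0$, equivalently $\pi_Q=\pi_P$, must be handled: then the supremum is $0$ and every nonconstant $g$ is a maximizer, consistent with the convention stated after \Cref{prop:stationary_expec_dm}.

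To handle these, I would first try to show that any feasible $\omega$ attaining the LP value must saturate the box on $\{i:a_i\neq\mu^\ast\pi_P(i)\}$. The argument is that $\langle a,\omega\rangle=\langle a-\mu^\ast\pi_P,\omega\rangle\le\sum_i|a_i-\mu^\ast\pi_P(i)|$, with equality only in that case. Mapping such $\omega$ back through Step 1 then yields the stated family $\{c\,g^\ast+b\mathbf 1\}$.
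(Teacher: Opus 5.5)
Your proposal is correct and takes the paper's route. You reparametrize by the Poisson solution, $g=(I-P)\omega+b\mathbf 1$ with $\langle\pi_P,\omega\rangle=0$, reduce to the linear program $\max\langle a,\omega\rangle$ over $[-1,1]^m\cap\{\langle\pi_P,\omega\rangle=0\}$, and solve it by Lagrangian duality. Your partial dual $\min_\mu\sum_i|a_i-\mu\pi_P(i)|$ is the same computation as the paper's KKT analysis, since the paper's step function $C(\mu)$ is exactly minus the derivative of your dual objective. Your equality-case argument via $\langle a-\mu^\ast\pi_P,\omega\rangle\le\sum_i|a_i-\mu^\ast\pi_P(i)|$ pins down the \emph{full} maximizer set more carefully than the paper does. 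It holds for any dual-optimal $\mu^\ast$, provided $g^\ast$ ranges over all admissible tie values $t_i$, which are non-unique when several coordinates tie. The degenerate case $\pi_Q=\pi_P$ is covered by the $0/0$ convention.
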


\begin{proof}
Fix an ergodic transition matrix $P$ on $[m]$ with stationary distribution $\pi_P$. Recall the Poisson solution $\omega_{P,g}\in\mathbb{R}^m$ as in \eqref{eq:closedform_poisson} for $(P,g)$.
Given ergodic $Q \in \cM$ with stationary distribution $\pi_Q$, consider the functional
\begin{equation}\label{eq:psi-def}
\psi(g)\;\coloneqq\;\frac{1}{2\|\omega_{P,g}\|_\infty^2}\Big( \langle\pi_Q, g\rangle- \langle\pi_P, g\rangle\Big)^2
\;=\;\frac{1}{2\|\omega_{P,g}\|_\infty^2}\,\langle\pi_Q-\pi_P,g\rangle^2 .
\end{equation}
The map $g\mapsto \psi(g)$ is invariant to adding constants ($g\mapsto g+c\mathbf 1$) and to scaling ($g\mapsto c g$). Hence, without loss of generality,  restrict to $\langle \pi_P,g\rangle=0$ and optimize over directions.

On the subspace $\{g:\langle \pi_P,g\rangle=0\}$, \eqref{eq:poisson} reduces to $(I-P)\omega=g$ with $\langle \pi_P,\omega\rangle=0$, so we can reparameterize $g=(I-P)\omega$. For a function of this form, we observe that the solution as in \eqref{eq:closedform_poisson} is exactly equal to $\omega$, i.e. $\omega_{P,g}=\omega$. Therefore, we use the two interchangeably throughout this section. 

Using $\langle \pi_P,(I-P)\omega\rangle=0$, we obtain
\begin{align}
\langle \pi_Q-\pi_P,g\rangle
=\langle\pi_Q, g\rangle= \langle \pi_Q,(I-P)\omega\rangle
=\omega^\top (I-P^\top)\pi_Q
=\langle a,\omega\rangle,
\end{align}
where $a\coloneqq (I-P^\top)\pi_Q$. Following this, we can rewrite the inner supremum in \eqref{eq:tractable_surrogate_stat} as:
\begin{align}
\sup_{\substack{g\in\mathbb{R}^m: \\ \langle \pi_P, g \rangle = 0}} \psi(g) 
&= \sup_{\substack{g\in\mathbb{R}^m: \\ \langle \pi_P, g \rangle = 0}} \ \frac{1}{2\|\omega_{P,g}\|_\infty^2}\,\langle \pi_Q , \, g\rangle^2 = \sup_{\substack{g\in\mathbb{R}^m: \\ \langle \pi_P, g \rangle = 0}} \ \frac{1}{2\|\omega_{P,g}\|_\infty^2}\,\langle \pi_Q, \, (I-P) \, \omega_{P, g} \rangle^2 \label{eq:supremum-of-psi(g)}
\end{align}

We can now change the optimization variable from $g$ to $\omega$ as:
\begin{align}
  \sup_{\substack{g\in\mathbb{R}^m: \\ \langle \pi_P, g \rangle = 0}} \ \frac{1}{2\|\omega_{P,g}\|_\infty^2}\,\langle \pi_Q , \, (I-P) \, \omega_{P, g} \rangle^2 = \sup_{\substack{\omega\neq 0\\ \langle \pi_P,\omega\rangle=0}}
\frac{\langle a,\omega\rangle^2}{2\|\omega\|_\infty^2}
=\frac12\Big(\sup_{\substack{\|\omega\|_\infty=1\\ \langle \pi_P,\omega\rangle=0}}\langle a,\omega\rangle\Big)^2.
\label{eq:psi-omega}
\end{align}

To justify the change of variable, observe that for every feasible $g \in \mathbb{R}^m$ (i.e., $\langle \pi_P, g \rangle = 0$), there exists a corresponding non-zero $\omega_{P,g}$ satisfying $\langle \pi_P, \omega_{P,g} \rangle = 0$. This implies that the value of the first optimization problem is at most that of the second. Conversely, for any non-zero $\omega$ such that $\langle \pi_P, \omega \rangle = 0$, the vector defined by $g = (I-P)\omega$ satisfies the constraint $\langle \pi_P, g \rangle = 0$. This ensures that the value of the second optimization problem is at most that of the first, proving the equality. 

The inner supremum is a linear program:
\begin{equation}\label{eq:LP}
\max_{\omega\in[-1,1]^m}\ \langle a,\omega\rangle
\quad\text{s.t.}\quad \langle \pi_P,\omega\rangle=0 .
\end{equation}

Constructing the Lagrangian with parameters $(\lambda_i)_{i\in[m]}, (\gamma_i)_{i\in[m]}, \mu$ such that $\lambda_i, \gamma_i \geq 0 \ \forall \ i \in [m], \mu \in \reals$, we have
\[
\mathcal{L}(\lambda, \gamma, \mu; \omega) = -\langle a,\omega\rangle + \sum_{i}\lambda_i(\omega_i-1) + \sum_i \gamma_i (-\omega_i-1) + \mu \,\langle \pi_P,\omega\rangle. 
\]

Observe that since the objective function as well as the inequality constraints are convex and the equality constraint is affine, strong duality is equivalent to the Kahn-Karush-Tucker (KKT) conditions (see for example, \citet[Theorem 5.12]{Vishnoi2021}). Thus, the optimizers $(\omega^*, \lambda^*, \gamma^*, \mu*)$ must satisfy the following KKT conditions:
\begin{enumerate}
    \item \textit{(Dual feasibility) }$\lambda^*_i \geq 0, \gamma^*_i \geq 0 \ \forall \ i\in [m].$
    \item \textit{(Primal feasibility) } $\omega^*_i \in [-1, 1] \ \forall \ i \in [m], \quad \langle \pi_P, \omega\rangle = 0.$
    \item \textit{(Stationarity) } For each $i\in [m]$, we have: $\frac{\partial L}{\partial \omega_i} = 0 \implies a_i - \mu^* \pi_P(i) = \lambda^*_i - \gamma^*_i.$
    \item \textit{(Complementary slackness) } For each $i\in [m]$, we have: $\lambda_i^*(\omega^*_i - 1) = 0,\quad  \gamma_i^*(-\omega^* - 1) = 0.$ Observe that this implies at least one of $\lambda^*_i, \gamma^*_i$ must be $0$.
\end{enumerate}

We analyze the stationarity condition above case by case:
\begin{enumerate}
    \item \textit{Case $1$}: $a_i - \mu^*\pi_P(i) > 0 \implies \lambda_i^* - \gamma_i^* > 0 \implies \lambda_i^* > 0 \implies \omega_i^* = 1.$
    \item \textit{Case $2$}: $a_i - \mu^*\pi_P(i) < 0 \implies \lambda_i^* - \gamma_i^* < 0 \implies \gamma_i^* < 0 \implies \omega_i^* = -1.$
    \item \textit{Case $3$}: $a_i - \mu^* \pi_P(i) = 0 \implies \lambda_i^* = \gamma_i^* = 0.$
\end{enumerate}

Define a partitioning of the coordinates based on the sign of $a_i - \mu\pi_P(i)$:

\begin{align*}
    \mathcal{S}^+(\mu) = \{i\in [m]: a_i - \mu\pi_P(i) > 0 \}\\
    \mathcal{S}^-(\mu) = \{i\in [m]: a_i - \mu\pi_P(i) < 0 \} \\
    \mathcal{S}^=(\mu) = \{i\in [m]: a_i - \mu\pi_P(i) = 0 \}
\end{align*}

Thus, the stationarity conditions dictate that for the optimal $\mu^*$: $\omega^*_i = 1$ for $i\in \mathcal{S}^+$, and $\omega^*_i = -1$ for $i\in \mathcal{S}^-$. To ensure primal feasibility ($\sum_i \pi_P(i)\omega^*_i = 0$), we must have:

$$\sum_{i\in \mathcal{S^+}} \pi_P(i) - \sum_{i\in \mathcal{S}^-} \pi_P(i) + \sum_{i\in \mathcal{S^=}} \pi_P(i)\omega^*_i = 0.$$

To find a $\mu^*$ that satisfies all constraints (thereby achieving the optimal value), consider the function:

$$C(\mu) = \sum_{i=1}^m \pi_P(i) \text{ sgn}(a_i - \mu \pi_P(i)).$$

This function has the following properties:
\begin{itemize}
    \item $C(\mu)$ is a monotonically decreasing, piecewise constant function.
    
    \item The function steps at the points $\{a_i/\pi_P(i): i\in [m]\}$. Let us arrange these points in strictly increasing order $\frac{a_{(1)}}{\pi_{P_{(1)}}} < \frac{a_{(2)}}{\pi_{P_{(2)}}} < \dots$ (grouping identical ratios).

    \item The limits are $\lim_{\mu \to -\infty} C(\mu) = 1$ and $\lim_{\mu \to \infty} C(\mu) = -1$.

\end{itemize}
Because of its limits and monotonicity, we consider two scenarios for where $C(\mu)$ crosses zero:
\begin{enumerate}
    \item \textit{Case 1:} There exists an interval $\left[\frac{a_{(j)}}{\pi_{P_{(j)}}}, \frac{a_{(j+1)}}{\pi_{P_{(j+1)}}}\right)$ where $C(\mu) = 0$. By choosing $\mu^*$ strictly inside this open interval, $\mathcal{S}^=(\mu^*) = \emptyset$. Because $C(\mu^*) = 0$, we have exactly $\sum_{i\in \mathcal{S^+}} \pi_P(i) = \sum_{i\in \mathcal{S^-}} \pi_P(i) = 1/2$. Setting $\omega_i^* = 1$ for $\mathcal{S}^+$ and $-1$ for $\mathcal{S}^-$ immediately satisfies primal feasibility.

    \item \textit{Case 2:} $C(\mu)$ jumps across zero at a specific step. That is, there exists an index $j$ such that $C(\mu) > 0$ for $\mu < \frac{a_{(j)}}{\pi_{P_{(j)}}}$ and $C(\mu) < 0$ for $\mu > \frac{a_{(j)}}{\pi_{P_{(j)}}}$.We set $\mu^* = \frac{a_{(j)}}{\pi_{P_{(j)}}}$. Here, $\mathcal{S}^=(\mu^*)$ contains $j$ (and potentially other coordinates with the same ratio). We choose $\omega^*_j$ to satisfy primal feasibility:$$\pi_P(j) \omega^*_j = \sum_{i\in \mathcal{S}^-} \pi_P(i) - \sum_{i\in \mathcal{S}^+} \pi_P(i).$$ 
    
    To prove $\omega^*_j \in [-1, 1]$, we must show that the right-hand difference lies in $[-\pi_P(j), \pi_P(j)]$. By construction of the jump over zero, the mass strictly above the threshold ($\mathcal{S}^+$) and strictly below the threshold ($\mathcal{S}^-$) are both strictly less than $1/2$. Since $\sum_{i\in \mathcal{S}^-} \pi_P(i) + \sum_{i\in \mathcal{S}^+} \pi_P(i) + \pi_P(j) = 1$, the absolute difference between the two subsets cannot exceed the remaining mass $\pi_P(j)$. Thus, $\omega_j^*$ is a valid assignment.
\end{enumerate}

Having established the optimal solution $\omega^*$ for the inner linear program, we now map this solution back to the original unconstrained optimization problem over $g$.

Recall that to solve the supremum of the functional $\psi(g)$, we restricted our search space without loss of generality to the subspace $\{g \in \mathbb{R}^m : \langle \pi_P, g \rangle = 0\}$, which allowed us to utilize the exact reparameterization $g = (I-P)\omega$. Substituting our optimal $\omega^*$ into this mapping gives a base optimal direction $g^\star = (I-P)\omega^*$. 

Furthermore, as observed at the beginning of the proof, the functional $\psi(g)$ is invariant to both shifting by constant vectors ($g \mapsto g + b\mathbf{1}$) and scaling by non-zero constants ($g \mapsto cg$). Consequently, any vector that is a scaled and shifted version of $g^\star$ will achieve the exact same supremum. 

Therefore, the full set of maximizers for the original optimization problem is exactly the set $\{c\,g^\star + b\mathbf{1} : c \neq 0, b \in \mathbb{R}\}$, which completes the proof.

\end{proof}

\section{Proofs for \Cref{sec:applications}}\label{sec:appendix_applications}

\begin{lemma}\label{lem:compactness_Ppi}
    The set $P_\pi \defas \{P\in \cM: \pi P=\pi\}$ is compact.
\end{lemma}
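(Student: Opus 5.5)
To prove that $\cP_\pi=\{P\in\cM:\pi P=\pi\}$ is compact, we will follow the pattern of \Cref{lem:compactness_M}. We work in the finite-dimensional space $\reals^{m\times m}$ with the topology induced by $\norm{\cdot}_{1,\infty}$; all norms there are equivalent. By \citet[Theorem 2.5-3]{kreyszig1978introductory}, it then suffices to show that $\cP_\pi$ is closed and bounded.

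\textbf{Boundedness.} This is immediate, since $\cP_\pi\subseteq\cM$ and every $P\in\cM$ satisfies $\norm{P}_{1,\infty}=1$.

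\textbf{Closedness.} We write $\cP_\pi=\cM\cap\{P\in\reals^{m\times m}:\pi P=\pi\}$. The first set is closed (indeed compact) by \Cref{lem:compactness_M}. The second set is the preimage of the closed singleton $\{\pi\}$ under the linear map $P\mapsto \pi P$, and linear maps on finite-dimensional spaces are continuous, so it is closed as well.

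For a sequential version of the same step, take $P_n\in\cP_\pi$ with $P_n\to P^\ast$ in $\norm{\cdot}_{1,\infty}$. Then $P^\ast\in\cM$ by \Cref{lem:compactness_M}. Moreover, for each column $j$,
\[
(\pi P^\ast)(j)=\sum_i \pi(i)P^\ast(i,j)=\lim_{n\to\infty}\sum_i\pi(i)P_n(i,j)=\pi(j),
\]
because each entry converges and the sum is finite. Hence $\pi P^\ast=\pi$ and $P^\ast\in\cP_\pi$.

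An intersection of two closed sets is closed, so $\cP_\pi$ is closed and bounded, and therefore compact.

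There is no real obstacle here. The only point needing care is that convergence in $\norm{\cdot}_{1,\infty}$ implies entrywise convergence, which is what allows the limit to pass through the finite sums. It is also worth noting that $\cP_\pi$ is nonempty, for example $\mathbf{1}\pi\in\cP_\pi$, so the compact null set used when applying \Cref{thm:optimality} is nondegenerate.
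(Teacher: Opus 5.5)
Your proof is correct and follows essentially the same route as the paper: boundedness from $\norm{P}_{1,\infty}=1$, and closedness from writing $\cP_\pi$ as the intersection of the closed set $\cM$ with the preimage of $\{\pi\}$ under the continuous linear map $P\mapsto \pi P$, followed by Heine--Borel. The sequential check and the nonemptiness remark ($\mathbf{1}\pi\in\cP_\pi$) go slightly beyond the paper's argument but do not change it.
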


\begin{proof}[Proof of \Cref{lem:compactness_Ppi}]
    Boundedness follows the fact that each element of $P_\pi$ is a stochastic matrix and thus has $\ell_{1, \infty}$ norm of 1. For closedness, consider the linear, continuous function $f:\reals^{m\times m} \to \reals^m, f(P) = \pi P$. The preimage of the closed set $\{\pi\}$ under this function, i.e. $f^{-1}(\pi)$ is a closed set. Closedness follows from observing that the space of stochastic matrices is closed, the intersection of closed sets is closed and $P_\pi = f^{-1}(\pi) \bigcap \cM$. Compactness then follows from the Heine-Borel theorem \citep[Theorem 2.5-3]{kreyszig1978introductory}.

\end{proof}

\begin{lemma}\label{lem:compactness_plin}
    The set $\mathcal{P}_L$ is compact under the standard topology.
\end{lemma}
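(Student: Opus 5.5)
The plan mirrors the proof of \Cref{lem:compactness_Ppi}: since $\cP_L$ is a subset of the finite-dimensional space $\reals^{|\cS||\cA|\times|\cS||\cA|}$, by Heine--Borel \citep[Theorem 2.5-3]{kreyszig1978introductory} it suffices to show that $\cP_L$ is bounded and closed. Boundedness is immediate, because every element of $\cP_L$ is required to be a transition matrix. Hence $\cP_L \subseteq \cM$ and $\norm{P}_{1,\infty}=1$ for all $P\in\cP_L$.

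For closedness, I would write $\cP_L = F(K)\cap\cM$. Here $K\defas\{\mu\in\reals^{|\cS|\times d}:\ \|\mu\|_{2,\infty}\le\sqrt d\}$, and $F:\reals^{|\cS|\times d}\to\reals^{|\cS||\cA|\times|\cS||\cA|}$ is the map $F(\mu)=\Phi\mu^\top\Pi_{\mathsf M}$. The matrices $\Phi$ and $\Pi_{\mathsf M}$ are fixed and known. The set $K$ is a finite intersection of closed Euclidean balls (one constraint $\|\mu(s)\|_2\le\sqrt d$ for each row), so it is closed and bounded, hence compact. The map $F$ is linear, hence continuous, so $F(K)$ is compact and in particular closed. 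Finally, $\cM$ is closed by \Cref{lem:compactness_M}. The intersection of a compact set with a closed set is compact, which gives the claim directly, with no separate closedness argument needed.

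The only point needing care is that closedness must come from compactness of the parameter set $K$, not from linearity alone. The image of a merely closed set under a linear map need not be closed, and this is exactly why the norm constraint $\|\mu\|_{2,\infty}\le\sqrt d$ in the definition of $\cP_L$ matters. An equivalent sequential argument works as well. Take $P_n=\Phi\mu_n^\top\Pi_{\mathsf M}\to P$ with $\mu_n\in K$. Extract a subsequence $\mu_{n_k}\to\mu\in K$. By continuity, $P=\Phi\mu^\top\Pi_{\mathsf M}$, and $P\in\cM$ because $\cM$ is closed. So $P\in\cP_L$.
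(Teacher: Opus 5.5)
Your proposal is correct and takes the same route as the paper: $K$ is compact and $\mu\mapsto\Phi\mu^\top\Pi_{\mathsf M}$ is linear, hence continuous, so the image of $K$ is compact. Your extra intersection with the closed set $\cM$ is a harmless refinement the paper omits, since its definition of $\cP_L$ leaves stochasticity implicit, so your argument covers either reading of that definition.
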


\begin{proof}[Proof of \Cref{lem:compactness_plin}]
    Observe that the set $\{\mu \in\mathbb{R}^{|\mathcal{S}| \times d}:\norm{\mu}_{2, \infty} \le \sqrt{d}\}$ is a compact set under the topology induced by the $L_{2, \infty}$. Since $\mathbb{R}^{|\mathcal{S}| \times d}$ is a finite dimensional normed space, all norms induce an equivalent topology. As a consequence, $\{\mu \in\mathbb{R}^{|\mathcal{S}| \times d}:\norm{\mu}_{2, \infty} \le \sqrt{d}\}$ is compact under the topology induced by the $\ell_{1, \infty}$ norm. $\mathcal{P}_L$ is the image of this set under the linear function given by $\mu \mapsto \Phi \mu^\top \Pi_{\mathsf{M}}$, which is continuous. Since continuous functions map compact sets to compact sets, we have that $\mathcal{P}_L$ is compact.
\end{proof}

\section{Technical Lemmas}

\begin{lemma}\label{lem:tech_lemma}
Let $a \geq 1$ and $b \geq  2a$. Then: $$x \leq a \log(x) + b \implies x \leq b + 2a \log(b). $$
\end{lemma}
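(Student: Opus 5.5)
The plan is to prove the contrapositive style bound directly from the hypothesis $x \le a\log x + b$. I will first get a crude upper bound on $x$, then feed it back into the logarithm once.

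The first step is to show that the hypothesis forces $x \le 2b$, and I intend to do this by contradiction. Suppose $x > 2b$. Since $b \ge 2a \ge 2$, we have $x > 4a \ge 4$. Consider the function $h(y) \defas y - a\log y$, whose derivative $1 - a/y$ is positive for $y > a$. So $h$ is increasing on $[a,\infty)$, and because $2b \ge 4a > a$ we get $h(x) > h(2b) = 2b - a\log(2b)$. The hypothesis says $h(x) \le b$, so it suffices to check that $b \ge a\log(2b)$ whenever $b \ge 2a$ and $a \ge 1$.

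This check is the step I expect to be the main obstacle, since the constants are tight. Write $b = ka$ with $k \ge 2$. The claim becomes $k \ge \log(2ka)$, and this is \emph{not} automatic: for large $a$ with $k$ fixed it fails. So the crude bound $x \le 2b$ may not hold under the stated hypotheses alone, and I will handle it more carefully.

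The fallback is to bootstrap without a crude bound. Set $y = b + 2a\log b$ and suppose $x > y$. Since $y \ge b \ge 2a$, monotonicity of $h$ gives $h(x) > h(y) = b + 2a\log b - a\log(b + 2a\log b)$. To reach a contradiction with $h(x) \le b$, I need
\[
2\log b \ge \log(b + 2a\log b), \qquad \text{equivalently} \qquad b^2 \ge b + 2a\log b .
\]
Using $a \le b/2$, it is enough to have $b^2 \ge b + b\log b$, that is $b \ge 1 + \log b$, which holds for all $b > 0$. Hence $h(x) > b$, contradicting the hypothesis, and so $x \le b + 2a\log b$.

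In the final write-up I will use only this direct argument and drop the crude $2b$ step. The one remaining detail is that we need $x > y \ge a$ to invoke monotonicity of $h$, and this holds because $\log b \ge \log 2 > 0$ gives $y \ge b \ge 2a$.
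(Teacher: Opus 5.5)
Your final argument is correct and matches the paper's proof. Both argue by contrapositive: $x \mapsto x - a\log x$ is increasing beyond $a$, and $b^2 \ge b + 2a\log b$ (via $b - 1 \ge \log b$ and $2a \le b$) gives $x - a\log x > b$ for every $x > b + 2a\log b$. You were also right to drop the crude $x \le 2b$ step: with $b = 2a$ and $a$ large, $x = a\log a$ satisfies the hypothesis yet exceeds $2b$, so that bound genuinely fails.
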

\begin{proof} We proceed by showing the contrapositive. 

    Let $f(x) \defas x-a\log x-b$. Then, $f'(x) = 1- a/x$. Let $x_0 \defas b + 2a\log b$. Observe $x > x_0 \implies x > b > a \implies f'(x) > 0 $. Therefore $f$ is strictly increasing $\forall \ x> x_0$. 

    Next, we show $b^2 > b+2a\log b$. 
    \begin{align*}
        b^2 - b &= b(b-1) > b\log b > 2a\log b,
    \end{align*}
    where the first inequality follows since $b > 1$, and second uses $b \ge 2a$.
    
    Now, consider:
    \begin{align*}
            f(x_0) &= b+2a\log b - a\log (b+2a\log b) - b \\
            &= a(2 \log b - \log (b+2a\log b)) \\
            &= a\logfrac{b^2}{b+2a\log b} \tag{$\because b^2 > b+2a\log b$}\\
            &> 0.
    \end{align*}

    Since the function is strictly increasing, $f(x) > f(x_0) \ \forall \ x> x_0$ i.e. $x> a\log x + b \ \forall \ x> b+2a\log b$.

\end{proof}

\begin{lemma}\label{lem:technical_kl_zero}
        Let $d(x,y) \defas x\logfrac{x}{y} + (1-x)\logfrac{1-x}{1-y}$ be the KL divergence between $\ber(x)$ and $\ber(y)$ for $x, y\in (0,1)$. Then:
        $$\lim_{\alpha, \beta \to 0} \frac{d(\beta, 1-\alpha)}{\log(1/\alpha)} = 1,$$
\end{lemma}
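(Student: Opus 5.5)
We expand $d(\beta,1-\alpha)$ explicitly and control each of its pieces. By definition,
\[
d(\beta,1-\alpha)=\beta\log\frac{\beta}{1-\alpha}+(1-\beta)\log\frac{1-\beta}{\alpha}.
\]
We rewrite this as
\[
(1-\beta)\log(1/\alpha)+\Big[\beta\log\beta+(1-\beta)\log(1-\beta)\Big]-\beta\log(1-\alpha).
\]
The bracketed term equals $-h(\beta)$, where $h$ is the binary entropy in nats. So after dividing by $\log(1/\alpha)$ we obtain three contributions:
\[
(1-\beta),\qquad -\frac{h(\beta)}{\log(1/\alpha)},\qquad -\frac{\beta\log(1-\alpha)}{\log(1/\alpha)}.
\]

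We treat the joint limit as $(\alpha,\beta)\to(0,0)$ in $(0,1)^2$ and bound each contribution uniformly, so that no particular path matters. The first term tends to $1$ because $\beta\to 0$. For the second, we use $0\le h(\beta)\le\log 2$ for every $\beta\in(0,1)$, while $\log(1/\alpha)\to\infty$. Hence $|h(\beta)/\log(1/\alpha)|\le \log 2/\log(1/\alpha)\to 0$ regardless of how $\beta$ moves. For the third, $|\log(1/(1-\alpha))|\to 0$ and $\beta\le 1$, so the numerator tends to $0$ while the denominator diverges, and this term vanishes as well.

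Combining the three bounds gives
\[
\left|\frac{d(\beta,1-\alpha)}{\log(1/\alpha)}-1\right|\le \beta+\frac{\log 2+\log\frac{1}{1-\alpha}}{\log(1/\alpha)}.
\]
The right-hand side tends to $0$ as $\alpha,\beta\to 0$ jointly, which proves the claim.

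There is no real obstacle here. The only point requiring care is that the limit is joint, so we need bounds uniform in the other variable. The uniform bound $h\le\log 2$ is what delivers this; we deliberately avoid any estimate of the form $\beta\log\beta\to 0$ that would tie $\beta$'s rate to $\alpha$'s.
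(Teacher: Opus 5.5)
Your proof is correct and follows essentially the paper's route. You expand $d(\beta,1-\alpha)$, split off the $(1-\beta)\log(1/\alpha)$ term, and show the remaining bounded pieces vanish after dividing by $\log(1/\alpha)$. Grouping those pieces into the binary entropy $h(\beta)\le\log 2$ makes the uniformity in the joint limit explicit, which the paper leaves implicit; it holds there too, since $|\beta\log\beta|\le 1/e$ on $(0,1)$.
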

\begin{proof}
We expand the KL divergence as follows:
    \begin{align*}
    \frac{d(\beta, 1-\alpha)}{\log(1/\alpha)} &= \frac{\beta \logfrac{\beta}{1-\alpha}}{\log(1/\alpha)} + \frac{(1-\beta)\logfrac{1-\beta}{\alpha}}{\log(1/\alpha)}\\
        &= \frac{\beta \logfrac{\beta}{1-\alpha}}{\log(1/\alpha)} + \frac{(1-\beta)\log(1-\beta)}{\log(1/\alpha)} + (1-\beta).
    \end{align*}

Taking the limit as $\alpha, \beta \to 0$, we see that the first two terms go to zero, while the last term goes to 1. 
\end{proof}

\section{Experiments}\label{sec:appendix_experiments}

In this section, we provide details about our experimental methodology.

\subsection{Misspecification in MCMC}

\textbf{Statistic vs time (fixed $\alpha$):} Here, we fix $\pi = [0.1, 0.1, 0.2, 0.2, 0.4]$ and $\alpha=0.05$ and study the expected stopping times under two data generating instances.
\begin{enumerate}
    \item $Q\notin \cP_{\pi}$. We take the following matrix to generate the data:

    $$  Q_{\text{bad}} = \begin{bmatrix}
  0.1 & 0.5 & 0.1 & 0.1 & 0.2 \\
  0.2 & 0.1 & 0.4 & 0.2 & 0.1 \\
  0.1 & 0.1 & 0.1 & 0.6 & 0.1 \\
  0.3 & 0.2 & 0.1 & 0.1 & 0.3 \\
  0.1 & 0.1 & 0.1 & 0.1 & 0.6 .
  \end{bmatrix}$$

  We note that this matrix is positive and hence, specifies an ergodic Markov chain. Its corresponding stationary distribution is given as: $\pi_{\text{bad}} \approx[0.1574, 0.1825, 0.1548, 0.1956, 0.3097]$. We show the growth of the statistic and demonstrate how evidence accumulates against the null with time. We repeat the experiment over 100 runs and present the results in \Cref{fig:mcmc_appendix_1}.

\begin{figure}[h]
    \centering
    \includegraphics[width=\textwidth]{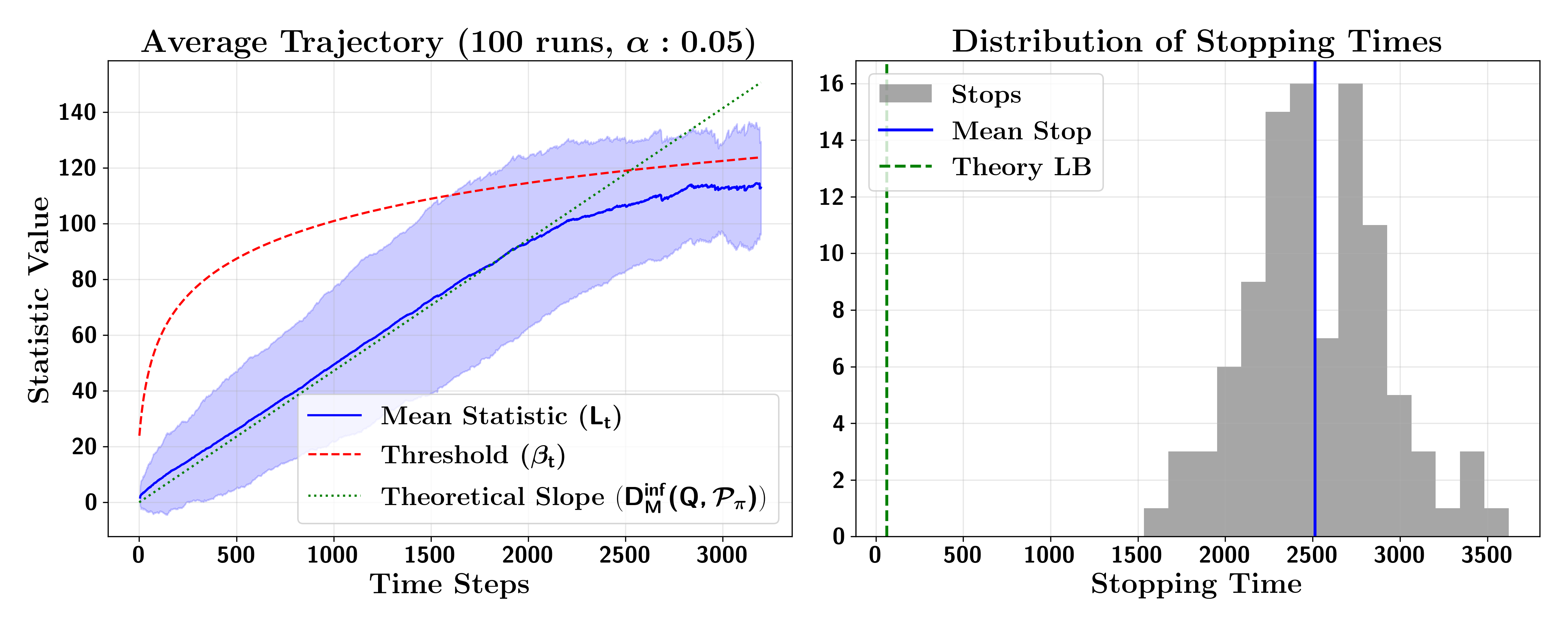}
    \caption{On the left side, we show the average trajectory (shaded: $\pm 3$ standard deviations) taken by the statistic across time. On the right side, we plot the histogram of stopping times across the same runs.}
    \label{fig:mcmc_appendix_1}
\end{figure}

  \item  $Q \in \cP_{\pi}$. We take the following matrix to generate the data:

    $$  Q_{\text{good}} = \begin{bmatrix}
0.50 & 0.20 & 0.00 & 0.00 & 0.30 \\
0.20 & 0.50 & 0.30 & 0.00 & 0.00 \\
0.00 & 0.15 & 0.50 & 0.35 & 0.00 \\
0.00 & 0.00 & 0.35 & 0.50 & 0.15 \\
0.075 & 0.00 & 0.00 & 0.075 & 0.85
\end{bmatrix}.$$

  Naturally, the stationary distribution of this chain, $\pi_{\text{good}} = \pi$. We show in this case, the statistic does not grow and stays below the threshold. We repeat the experiment over 100 runs and present the results in \Cref{fig:mcmc_appendix_2}. We remark that, in our testing, the test did not stop erroneously. 

  As a consequence, we estimate the probability of erroneously stopping under the null indirectly via importance sampling. We generate trajectories under $Q_{\text{bad}}$, under which the test stops quickly and use the following a change of measure identity:
  $$
  \probunder{Q_{\text{good}}}{\tau_{\alpha} < \infty} = \expecover{Q_{\text{bad}}}{\indicator{\tau < \infty} \frac{dQ_{\text{good}}}{dQ_{\text{bad}}} (\tau_{\alpha})},
  $$
  where $\frac{dQ_{\text{good}}}{dQ_{\text{bad}}} (\tau)$ denotes the likelihood ratio evaluated along the trajectory up to time $\tau_\alpha$. Using this estimator, we evaluated the false alarm probability to be $1.21 \times 10^{-30}$ (with a $95\%$ confidence interval of $2.12 \times 10^{-30}$ over $n=1000$ runs).

\begin{figure}[h]
    \centering
    \includegraphics[width=0.5\textwidth]{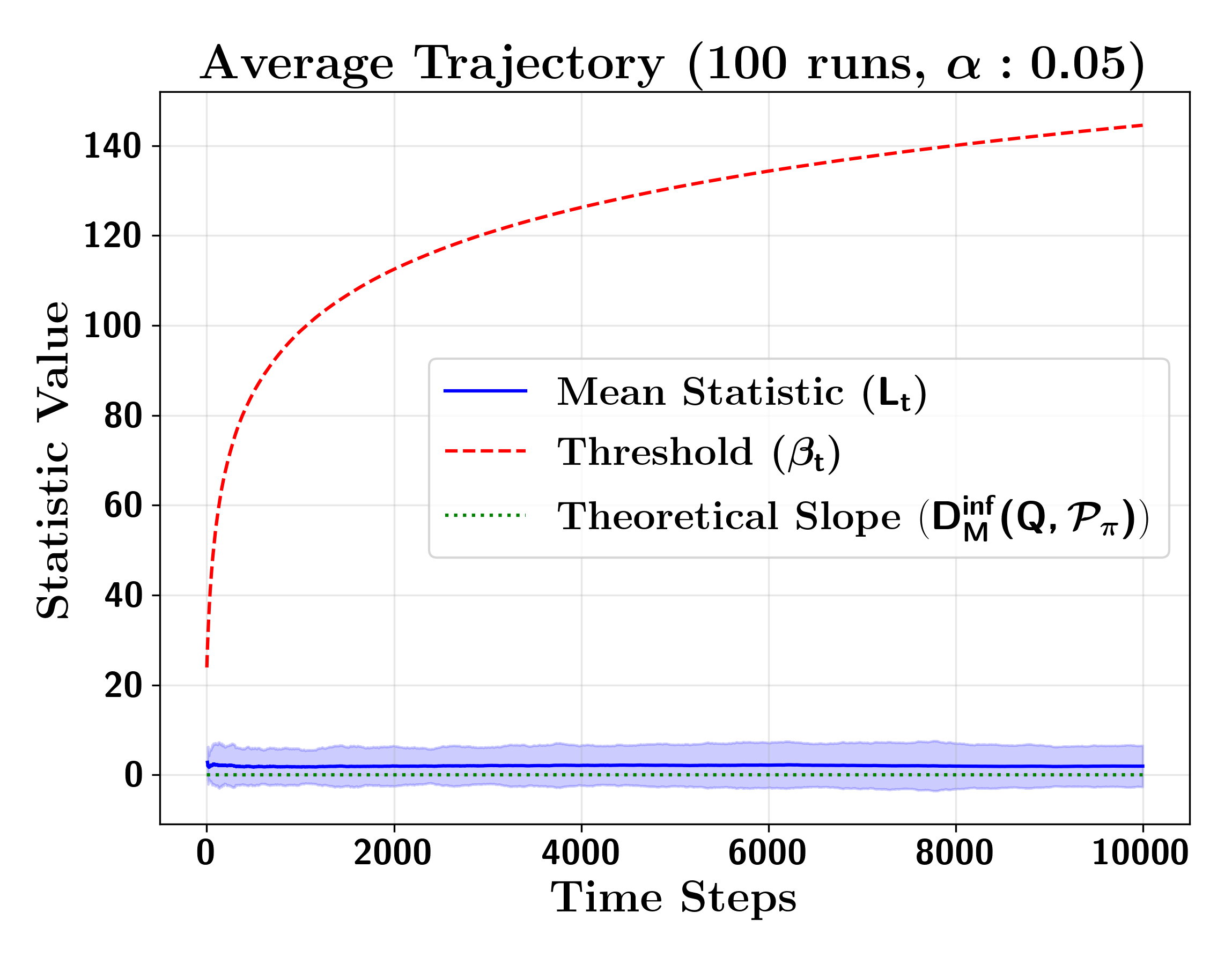}
    \caption{We show the average trajectory (shaded: $\pm 3$ standard deviations) taken by the statistic across time. We do not observe any rejections empirically and estimate the rejection probability indirectly via importance sampling to be $1.21 \times 10^{-30} \pm 2.12 \times 10^{-30}$ over $n=1000$ runs.}
    \label{fig:mcmc_appendix_2}
\end{figure}
  
\end{enumerate}

\subsection{Linearity Testing in MDPs}
Here, we provide details of the experimental setup implemented in \Cref{sec:subsection_applications_mdp}.

We evaluate our testing framework on a discretized version of the classic MountainCar-v0 environment. The continuous state space, consisting of position $p \in [-1.2, 0.6]$ and velocity $v \in [-0.07, 0.07]$, is discretized into a uniform grid. We use an $8 \times 8$ grid, resulting in a finite state space of size $S = 64$. The environment has a discrete action space of size $A=3$ (accelerate left, no acceleration, accelerate right). The data stream $(X_0, A_0, X_1, A_1, \dots)$ is generated using a uniform random policy, where actions are selected uniformly at random $A_t \sim \text{Unif}(\mathcal{A})$ at each step.

We construct the feature map $\phi(s,a)$ using Radial Basis Functions (RBF). We treat the state-action pair index $k \in \{0, \dots, SA-1\}$ as a scalar and define $d-1$ Gaussian centers $\mathbf{c}_j$ evenly spaced across the domain. The feature vector is given by:

$$\phi(s, a) = \text{Normalize}\left( \left[ 1, \exp\left(-\frac{(k - \mathbf{c}_1)^2}{2\sigma^2}\right), \dots, \exp\left(-\frac{(k - \mathbf{c}_{d-1})^2}{2\sigma^2}\right) \right]^\top \right),$$

where a bias term of $1$ is included, and the resulting vector is normalized to sum to 1. We solve the convex optimization problem for the statistic using the cvxpy package \citep{diamond2016cvxpy, agrawal2018rewriting}. We test the linearity assumption across varying feature dimensions (ranks) $d \in \{3, 5, 7\}$ with the following choice of hyper-parameters:

\begin{table}[h]
    \centering
    \caption{Hyperparameters for the Linear MDP Experiment on Discretized MountainCar.}
    \label{tab:hyperparams}
    \begin{tabular}{l c l}
        \toprule
        \textbf{Parameter} & \textbf{Value} & \textbf{Description} \\
        \midrule
        $\alpha$ & $0.01$ & Target Type-I error probability \\
        $S$ & $64$ & State space size ($8 \times 8$ discretization) \\
        $A$ & $3$ & Action space size \\
        $d$ & $\{3, 5, 7\}$ & Tested feature dimensions (ranks) \\
        Check Interval & $100$ & Frequency of statistic evaluation (steps) \\
        $T$ (Horizon) & $100,000$ & Maximum duration of the experiment \\
        \bottomrule
    \end{tabular}
\end{table}

\subsection{Experiments on Synthetic Data from a Parametric Family}\label{experiments_on_parametric}
We define a one-parameter exponential family of transition matrices
$\{P_\theta : \theta \in \mathbb{R}\}$ derived from $P_0$ and a fixed feature vector
$f \in \mathbb{R}^m$.

For each $\theta$, define the un-normalized matrix
\[
\widetilde P_\theta(i,j)
=
P_0(i,j)\exp\!\left(\theta\cdot f_j \right),
\]
Let $\rho_\theta$ denote the Perron--Frobenius eigenvalue of $\widetilde P_\theta$
and let $v_\theta$ be the corresponding positive right eigenvector normalized to satisfy
$\sum_j v_\theta(j)=1$.
The normalized transition matrix is defined as
\[
P_\theta(i,j)
=
\frac{\widetilde P_\theta(i,j) v_\theta(j)}
{\rho_\theta v_\theta(i)}.
\]
For our experiments, we consider a five-state Markov chain. We randomly generate a transition matrix $P_0$. 

We then define null and alternative families as two disjoint parameter intervals:
\[
\mathcal P
=
\{P_\theta : \theta \in [0.4,0.8]\},
\qquad
\mathcal Q
=
\{P_\theta : \theta \in [-0.8,-0.4]\}.
\]

A single generating distribution $Q \in \mathcal Q$ is selected by fixing
$\theta_Q=-0.6$ from the alternate family.We set
$Q = P_{\theta_Q}$ and generate the new transition matrix as mentioned above.

All observed data sequences are generated from $Q$ in the reported experiments.
  
The feature vector is fixed as
\[
f = (1,1,0,-1,-1).
\]
We run each experiment for 100 epochs across varying values of $\alpha$ which is the Type I error level.
\begin{figure}[ht!]
    \centering
    \includegraphics[width=0.5\linewidth]{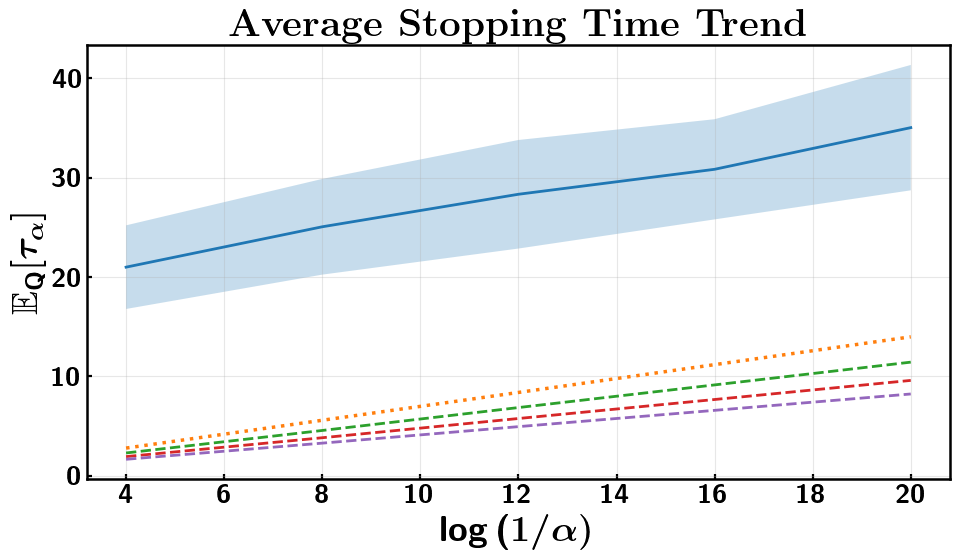}
    \includegraphics[width=0.75\linewidth]{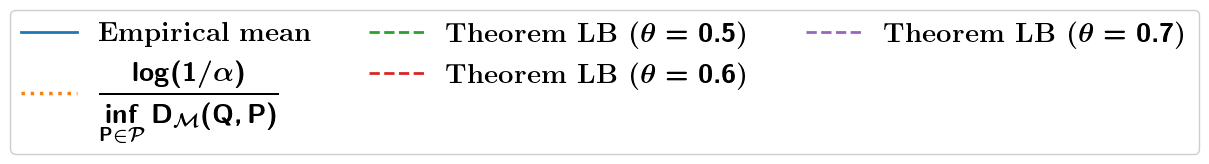}
    \caption{Expected Stopping Time as a function of $\log\left(\frac{1}{\alpha}\right)$
 for the experimental setup  described in Appendix~\ref{experiments_on_parametric}.}
    \label{fig:parametric-exp}
\end{figure}

\textbf{State Size ($m$) vs Error Probability ($\alpha$):}
    Within the one-parameter exponential family framework above, we conduct experiments for varying state sizes and error levels.  We run our experiments with Google Colab, Intel x$86\_64$ Xeon CPU  $@2.20$ GHz ($1$ core, $2$ logical CPUs) and report the runtimes ($n=100$ runs) in \Cref{tab:fixed_alpha,tab:fixed_m} below. 
    
    \begin{table}[h]
    \centering
    \caption{Comparison of mean stopping times and runtimes for different values of $m$.}
    \label{tab:fixed_alpha}
    \begin{tabular}{cccc}
    \hline
    \textbf{m} & $\boldsymbol{\alpha}$ & \textbf{Mean Stopping Time} & \textbf{Mean runtime/step (s)} \\ \hline
    32  & 0.05 & $665.65  \pm 130.893$ & $0.0184 \pm 0.001$ \\ \hline
    64  & 0.05 & $2516.84 \pm 620.151$ & $0.0796 \pm 0.002$ \\ \hline
    128 & 0.05 & $9700.55 \pm 2181.039$ & $0.6890 \pm 0.121$ \\ \hline
    \end{tabular}
    \end{table}
    
    \begin{table}[h]
    \centering
    \caption{Comparison of mean stopping times and runtimes for different values of $\alpha$.}
    \label{tab:fixed_m}
    \begin{tabular}{cccc}
    \hline
    \textbf{m} & $\boldsymbol{\alpha}$ & \textbf{Mean Stopping Time} & \textbf{Mean runtime/step (s)} \\ \hline
    64  & 0.01 & $2864.83 \pm 565.180$ & $0.0692 \pm 0.004$ \\ \hline
    64  & 0.05 & $2516.84 \pm 620.151$ & $0.0796 \pm 0.002$ \\ \hline
    64 & 0.1 & $2370.36 \pm 717.654$ & $0.0773 \pm 0.002$ \\ \hline
    \end{tabular}
    \end{table}

\subsection{Comparison with Baselines}
\label{baseline comparisions}
Since there are currently no established baselines for sequential testing with composite null and composite alternative hypotheses, this experiment is not intended as a quantitative comparison of algorithmic efficiency or optimality. Instead, it is designed to illustrate the ability of the proposed method to adapt to varying levels of problem difficulty.
To obtain a point of reference, we implement the procedure of \citet{fields2025sequentialonesidedhypothesistesting}, which is designed for a simple null hypothesis and a composite alternative. We apply it in our setting by collapsing the null family to a singleton at $\theta = 0.2$, while retaining the composite alternative $\theta \in [-0.8,-0.4]$. All other aspects of the experimental setup are kept identical across methods. 

\begin{figure}[t]
    \centering
\includegraphics[width=\linewidth]
{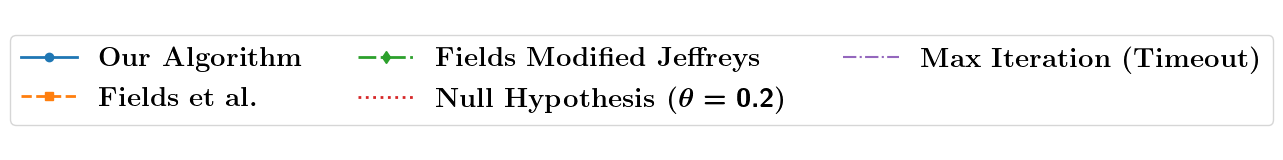}
 \centering
    \begin{subfigure}{0.49\linewidth}
        \centering
        \includegraphics[width=\linewidth]{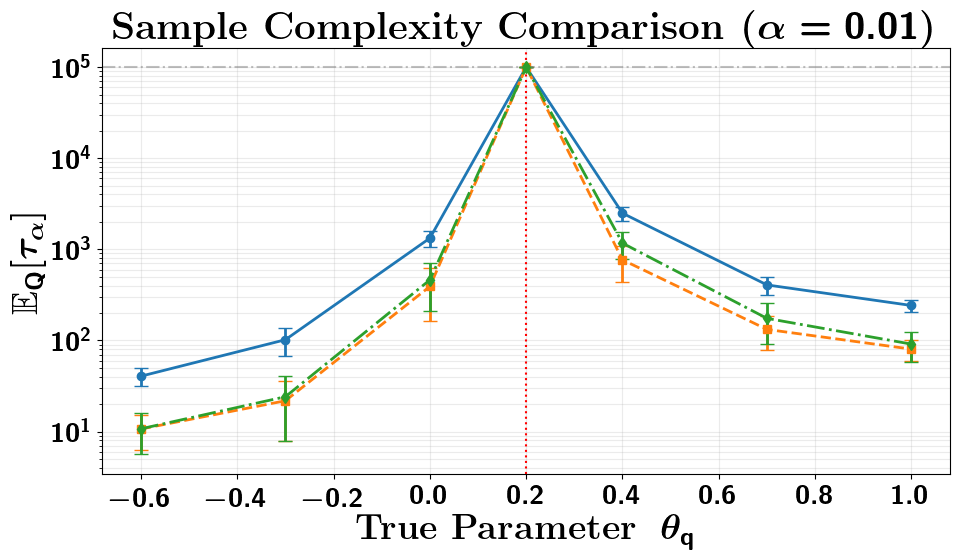}
        \caption{$\alpha=0.01$}
        \label{fig:baseline_alpha_001}
    \end{subfigure}\hfill
    \begin{subfigure}{0.49\linewidth}
        \centering
        \includegraphics[width=\linewidth]{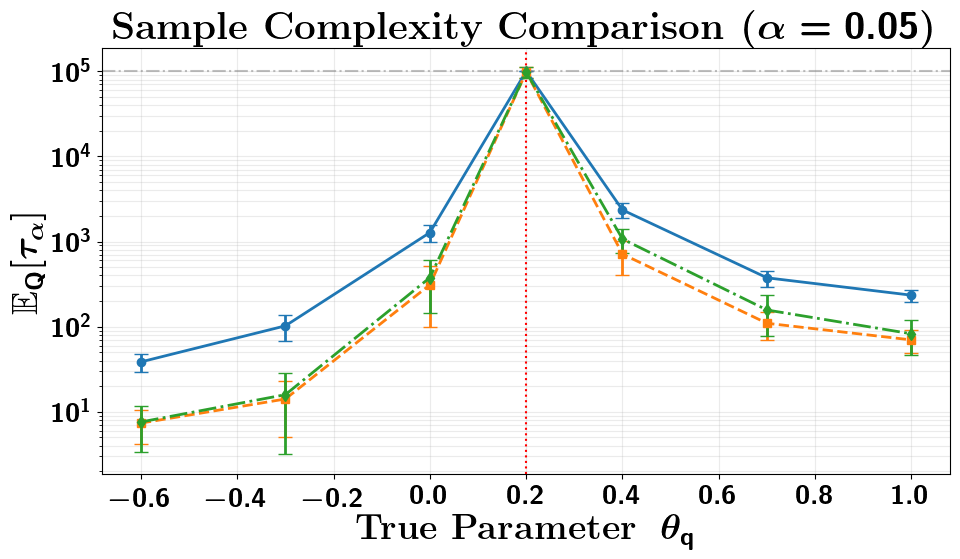}
        \caption{$\alpha=0.05$}
        \label{fig:baseline_alpha_005}
    \end{subfigure}

    \caption{Expected stopping time as a function of $\theta \in [-0.8,-0.4]$
    for the experimental setup described in Section~\ref{baseline comparisions}.}
    
\end{figure}
\begin{figure}[!ht]
    \centering
    \includegraphics[width=0.55\linewidth]{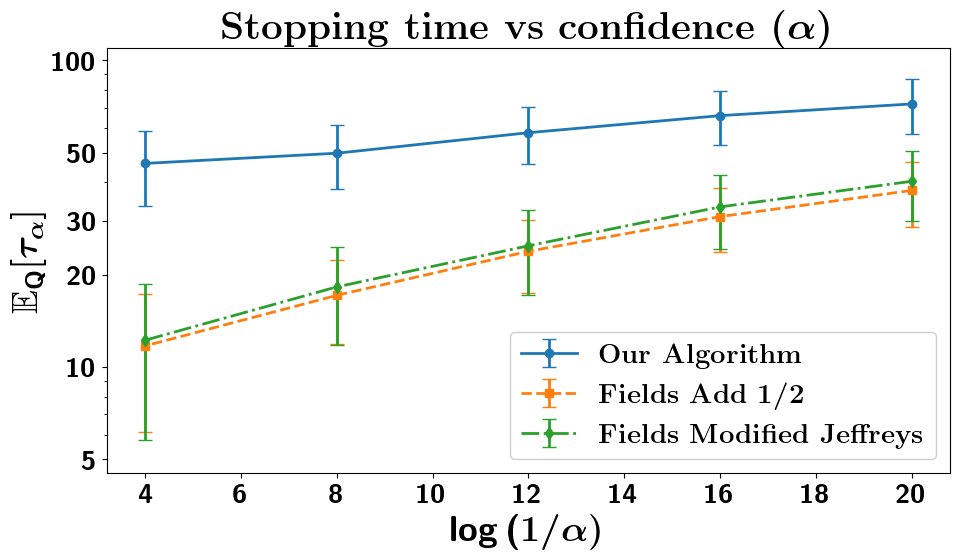}
    \caption{Expected Stopping Time as a function of $\log\left(\frac{1}{\alpha}\right)$
 for a fixed $\theta=-0.6$ in the experimental setup  described in ~\ref{baseline comparisions}.}
    \label{fig:alpha_sweep}
\end{figure}

\end{document}